\newcommand{\lmb}{\mbox{\boldmath$\lambda$}}
\newcommand{\tr}{^{\sf T}}
\newcommand{\m}[1]{{\bf{#1}}}
\newcommand{\g}[1]{\bm #1}
\newcommand{\C}[1]{{\cal {#1}}}
\renewcommand{\bar}{\overline}
\newtheorem{remark}{Remark}[section]
\title{Convergence Rate for a Radau hp Collocation Method Applied to
Constrained Optimal Control
\thanks{
%May 6, 2016.
October 24, 2017, revised September 11, 2018.
The authors gratefully acknowledge support by
the Office of Naval Research under grants N00014-11-1-0068,
N00014-15-1-2048, and N00014-18-1-2100,
by the National Science Foundation under
grants DMS-1522629, CBET-1404767, and DMS-1819002,
and  by the U.S.~Air Force
Research Laboratory under contract FA8651-08-D-0108/0054.}
}
\author{
William W. Hager\thanks{{\tt hager@ufl.edu},
http://people.clas.ufl.edu/hager/,
PO Box 118105,
Department of Mathematics,
University of Florida, Gainesville, FL 32611-8105.
Phone (352) 294-2308. Fax (352) 392-8357.}
\and
Hongyan Hou\thanks{{\tt hongyan.hou@mnstate.edu},
        Mathematics Department,
        Minnesota State University Moorhead,
        P.O. Box 104, 1104 7th Avenue South, Moorhead, MN 56563
        Phone (218) 477-4007.}
\and
Subhashree Mohapatra\thanks{{\tt subha@ufl.edu},
        Department of Mathematics,
        University of Florida, Gainesville, FL 32611.}
\and
Anil V. Rao\thanks{{\tt anilvrao@ufl.edu},
        http://www.mae.ufl.edu/rao,
        Department of Mechanical and Aerospace Engineering,
        P.O. Box 116250, Gainesville, FL 32611-6250.
        Phone (352) 392-0961. Fax:(352) 392-7303.}
\and
Xiang-Sheng Wang\thanks{{\tt xswang@louisiana.edu},
        http://www.ucs.louisiana.edu/$\sim$xxw6637/,
        Department of Mathematics,
        University of Louisiana at Lafayette,
        Lafayette, LA 70503.
        Phone (337) 482-5281.}
}
\begin{document}
\maketitle
\begin{abstract}
For control problems with control constraints,
a local convergence rate is established for an $hp$-method
based on collocation at the Radau quadrature points in each mesh
interval of the discretization.
If the continuous problem has a sufficiently smooth solution
and the Hamiltonian satisfies a strong convexity condition,
then the discrete problem possesses a local minimizer
in a neighborhood of the continuous solution, and as either the number
of collocation points or the number of mesh intervals increase,
the discrete solution convergences to the continuous solution in the sup-norm.
The convergence is exponentially fast with respect to the degree of the
polynomials on each mesh interval, while the error is bounded by a
polynomial in the mesh spacing.
An advantage of the $hp$-scheme over global polynomials is that
there is a convergence guarantee when the mesh is sufficiently small,
while the convergence result for global polynomials requires that a
norm of the linearized dynamics is sufficiently small.
Numerical examples explore the convergence theory.
\end{abstract}

\begin{keywords}
hp collocation, Radau collocation, convergence rate, optimal control,
orthogonal collocation
\end{keywords}

\begin{AMS}
%49M25 Calculus of variations and optimal control, Discrete approximations
%47M37 Calculus of variations and optimal control,
%          Methods of nonlinear programming type
%65K05 Numerical analysis, Mathematical programming
%90C30 Operations research, mathematical programming, nonlinear programming
49M25, 49M37, 65K05, 90C30
\end{AMS}

\pagestyle{myheadings} \thispagestyle{plain}
\markboth{W. W. HAGER, H. HOU, S. MOHAPATRA, A. V. RAO, AND X.-S. WANG}
{RADAU HP COLLOCATION FOR OPTIMAL CONTROL}

%--------------------------------------------------------------------------
\section{Introduction}
\label{introduction}
%--------------------------------------------------------------------------
A convergence rate is established for an $hp$-orthogonal collocation method
applied to a constrained control problem of the form
\begin{equation}\label{P}
\left. \begin{array}{cll}
\mbox {minimize} &C(\m{x}(1))&\\
\mbox {subject to} &\dot{\m{x}}(t)=
\m{f(x}(t), \m{u}(t)),& \m{u}(t) \in \C{U}, \quad t\in\Omega_0,\\
&\m{x}(0)=\m{a},&
(\m{x}, \m{u}) \in \C{C}^1(\Omega_0) \times
\C{C}^0 (\Omega_0),
\end{array} \right\}
\end{equation}
where $\Omega_0 = [0, 1]$, the control constraint set
$\C{U}\subset \mathbb{R}^m$ is closed and convex with nonempty interior,
the state ${\m x}(t)\in \mathbb{R}^n$,
$\dot{\m x}$ denotes the derivative of $\m{x}$ with respect to $t$,
${\m f}: {\mathbb R}^n \times {\mathbb R}^m\rightarrow {\mathbb R}^n$,
$C: {\mathbb R}^n \rightarrow {\mathbb R}$,
and ${\m a}$ is the initial condition, which we assume is given;
$\C{C}^l(\Omega_0)$
denotes the space of $l$ times continuously differentiable functions
mapping $\Omega_0$ to $\mathbb{R}^d$ for some $d$.
The value of $d$ should be clear from context; states and costates always
have $n$ components and controls have $m$ components.
It is assumed that $\m{f}$ and $C$ are at least continuous.
When the dynamics in (\ref{P}) can be solved for
the state $\m{x}$ as a function of the control $\m{u}$,
the control problem reduces to a constrained minimization over $\m{u}$.

The development of $hp$-techniques in the context of finite element
methods for boundary-value problems began with the work of
Babu\v{s}ka and Gui in \cite{Babuska1986a, Babuska1986b, Babuska1986c},
and Babu\v{s}ka and Suri in \cite{Babuska1987, Babuska1990, Babuska1994}.
In the $hp$-collocation approach that we develop for (\ref{P}),
the time domain $\Omega_0$ is initially partitioned into a mesh.
To simplify the discussion, we focus on a uniform mesh consisting
of $K$ intervals $[t_{k-1}, t_{k}]$ defined by the mesh points
$t_k = k/K$ where $0 \le k \le K$.
The dynamics of (\ref{P}) are reformulated using a change of variables.
Let $t_{k+1/2} = (t_k + t_{k+1})/2$ be the midpoint of the mesh
interval $[t_k, t_{k+1}]$.
We make the change of variables $t = t_{k-1/2} + h\tau$,
where $h = 1/(2K)$ is half the width of the mesh interval and
$\tau \in \Omega := [-1, 1]$;
let us define $\m{x}_k : \Omega \rightarrow \mathbb{R}^n$ by
$\m{x}_k (\tau) = \m{x}(t_{k-1/2} + h\tau)$.
Thus $\m{x}_k$ corresponds to the restriction of $\m{x}$ to the mesh
interval $[t_{k-1}, t_{k}]$.
Similarly, we define a control $\m{u}_k$ corresponding to the
restriction of $\m{u}$ to the mesh interval $[t_{k-1}, t_{k}]$.
In the new variables, the control problem reduces to finding
$K$ state-control pairs $(\m{x}_k, \m{u}_k)$, $1 \le k \le K$,
each pair defined on the interval $[-1, 1]$, to solve the problem
\begin{equation}\label{Dtau}
\left. \begin{array}{cll}
\mbox {minimize} &C(\m{x}_K(1))&\\
\mbox {subject to} &\dot{\m{x}}_k(\tau)= h
\m{f(x}_k(\tau), \m{u}_k(\tau)),\quad \m{u}_k(\tau) \in \C{U}, &\tau\in\Omega,\\
&\m{x}_k(-1)=\m{x}_{k-1}(1),& 1 \le k \le K, \\
& (\m{x}_k, \m{u}_k) \in \C{C}^1(\Omega) \times
\C{C}^0(\Omega).
\end{array} \right\}
\end{equation}
Since the function $\m{x}_0$ does not exist (there is no 0-th mesh interval),
we simply define $\m{x}_0(1) = \m{a}$, the initial condition.
The condition
\begin{equation}\label{continuity}
\m{x}_k(-1)=\m{x}_{k-1}(1)
\end{equation}
in (\ref{Dtau}) corresponds to the initial condition $\m{x}(0) = \m{a}$
when $k = 1$ and to continuity of the state across a mesh interval
boundary when $k > 1$.
Throughout the paper, (\ref{continuity}) is referred to as the
{\it continuity condition}.

In the $hp$-scheme developed in this paper,
the dynamics for $\m{x}_k$ are approximated by the Radau collocation
scheme developed in
\cite{DarbyHagerRao11,GargHagerRao11a,GargHagerRao10a,HagerHouRao15c}.
Let $\C{P}_N$ denote the space of polynomials of degree at most $N$
defined on the interval $\Omega$, and let $\C{P}_N^n$ denote the
$n$-fold Cartesian product $\C{P}_N \times \ldots \times \C{P}_N$.
We analyze a discrete approximation to (\ref{Dtau}) of the form
\begin{equation}\label{D}
\left. \begin{array}{cll}
\mbox {minimize} &C(\m{x}_K(1))&\\
\mbox {subject to} &\dot{\m{x}}_k(\tau_i)= h
\m{f}(\m{x}_k(\tau_i), \m{u}_{ki}),&1 \le i \le N, \;
\m{u}_{ki} \in \C{U}, \\
&\m{x}_k(-1)=\m{x}_{k-1}(1),& 1 \le k \le K, \; \m{x}_k \in \C{P}_N^n.
\end{array} \right\}
\end{equation}
Note that there is no polynomial associated with the control;
$\m{u}_{ki}$ corresponds to the value of the control at
$t_{k-1/2} + h\tau_i$.
In (\ref{D}) the dimension of $\C{P}_N$ is $N+1$
and there are $K$ mesh intervals, so a component of the state variable
is chosen from a space of dimension $K(N+1)$.
Similarly, there are $KN + K$ equations in (\ref{D}) corresponding to the
collocated dynamics at $KN$ points and the $K$ continuity conditions,
the initial condition at $t = 0$ and the $K-1$ continuity conditions
for the state at the interior mesh points.

For simplicity in the analysis, the same degree polynomials are
used in each mesh interval, while in practical implementations
of the $hp$-scheme
\cite{DarbyHagerRao11, DarbyHagerRao10, LiuHagerRao15, PattersonHagerRao14},
polynomials of different degrees are often used on different intervals.
On intervals where the solution is smooth, high degree polynomials are
employed, while on intervals where the solution is nonsmooth, low degree
polynomials are used.

We focus on a collocation scheme based on the
$N$ Radau quadrature points satisfying
\[
-1 < \tau_1 < \tau_2 < \ldots < \tau_N = 1 .
\]
The Radau points are related to the zeros of a Jacobi polynomial.
Recall that the Jacobi polynomials $P^{(\alpha, \beta)}_N$ are a class
of polynomials orthogonal with respect to the weight
$(1-\tau)^\alpha (1+\tau)^\beta$ for $\tau \in [-1, 1]$; the subscript $N$
specifies the polynomial degree.
The interior Radau abscissa $\tau_i$,
$1 \le i \le N-1$, are the zeros of the Jacobi polynomial
$P^{(1,0)}_{N-1}$ associated with the weight $1 - \tau$.
These quadrature points are sometimes called the flipped Radau points,
while the standard Radau points are $-\tau_i$, $1 \le i \le N$.
The analysis is the same for either set of points, while the notation is a
little cleaner for the flipped points.
Besides the $N$ collocation points, our analysis also utilizes
the noncollocated point $\tau_0 = -1$.

It is pointed out in \cite{HagerHouRao16c} that for a global collocation
scheme where $K = 1$, the discrete dynamics may be infeasible for certain
choices of $N$.
In contrast, the analysis in this paper implies that locally,
for each choice of the discrete control, there exists a unique discrete
state which satisfies the discrete dynamics when $K$ is sufficiently
large, or equivalently, when $h$ is sufficiently small, regardless of the
choice for $N$.
In this respect, the $hp$-collocation approach is more robust than
a global scheme.

Other global collocation schemes that have been presented in the literature
are based on the Lobatto quadrature points \cite{Elnagar1,Fahroo2},
on the Chebyshev quadrature points \cite{Elnagar4,FahrooRoss02},
on the Gauss quadrature points \cite{Benson2, GargHagerRao10a},
and on the extrema of Jacobi polynomials \cite{Williams1}.
Kang \cite{kang08,kang10} considers control systems in feedback
linearizable normal form, and shows that when the Lobatto discretized
control problem is augmented with bounds on the states and control, and
on certain Legendre polynomial expansion coefficients, then the
objectives in the discrete problem converge to the optimal objective
of the continuous problem at an exponential rate.
Kang's analysis does not involve coercivity assumptions for the continuous
problem, but instead imposes bounds in the discrete problem.
Also, in \cite{GongRossKangFahroo08} a consistency result is established
for a scheme based on Lobatto collocation.

Any of the global schemes
could be developed into an $hp$-collocation scheme.
Our rationale for basing our $hp$-scheme on the Radau collocation points
was the following:
In numerical experiments such as those in \cite{GargHagerRao10a},
there is often not much difference between the convergence speed of
approximations based on either Gauss or Radau collocation, while the
Lobatto scheme often converged much slower; and in some cases,
the Lobatto costate approximation did not converge due to a null space
that arises in the first-order optimality conditions --
see \cite{GargHagerRao10a}.
On the other hand, the implementation of an $hp$-scheme based on
the Radau quadrature points was much simpler than the implementation
based on the Gauss quadrature points.
The Gauss points lie in the interior of each mesh interval, which
requires the introduction of the state value at the mesh points.
Since one of the Radau points is a mesh point, there is no need
to introduce an additional noncollocated point.
The implementation ease of Chebyshev quadrature should be similar to
that of Gauss and was not pursued.
The $hp$-collocation scheme analyzed in this paper corresponds to the
scheme implemented in the popular GPOPS-II software package
\cite{Patterson2015} for solving optimal control problems.
This paper, in essence, provides a theoretical justification for
the algorithm implemented in the software.

%Let $\C{C}^l (\mathbb{R}^n)$ denote the space of $l$ times
%continuously differentiable functions
%$\m{x}: [0, 1] \rightarrow \mathbb{R}^n$.
For $\m{x} \in \C{C}^0(\Omega_0)$,
we use the sup-norm $\| \cdot \|_\infty$ given by
\[
\|\m{x}\|_\infty = \sup \{ |\m{x} (t)| : t \in \Omega_0 \} ,
\]
where $| \cdot |$ is the Euclidean norm.
Given $\m{y} \in \mathbb{R}^n$, the ball with center $\m{y}$ and radius
$\rho$ is denoted
\[
\C{B}_\rho (\m{y}) = \{ \m{x} \in \mathbb{R}^n :
|\m{x} - \m{y}| \le \rho \} .
\]
The following regularity assumption is assumed to hold throughout the paper.

{\bf Smoothness.}
The problem (\ref{P}) has a local minimizer
$(\m{x}^*, \m{u}^*)$ in
$\C{C}^1 (\Omega_0) \times \C{C}^0 (\Omega_0)$.
For some $\rho > 0$ and open set
$\C{O} \subset \mathbb{R}^{m+n}$ such that
\[
\C{B}_\rho (\m{x}^*(t),\m{u}^*(t)) \subset \C{O} \mbox{ for all }
t \in \Omega_0,
\]
the first two derivative of $f$ and $C$ are
Lipschitz continuous on the closure of
$\C{O}$ and on $\C{B}_\rho (\m{x}^*(1))$ respectively.

Let $\g{\lambda}^*$ denote the solution of the
linear costate equation
\begin{equation}\label{costate}
\dot{\g{\lambda}}^*(t)=-\nabla_xH({\m x}^*(t), {\m u}^*(t), {\g \lambda}^*(t)),
\quad {\g \lambda}^*(1)=\nabla C({\m x}^*(1)),
\end{equation}
where $H$ is the Hamiltonian defined by
$H({\m x}, {\m u}, {\g \lambda}) ={\g\lambda}\tr {\m f}({\m x}, {\m u})$ and
$\nabla$ denotes gradient.
By the first-order optimality conditions (Pontryagin's minimum principle),
we have
\begin{equation} \label{controlmin}
-\nabla_u H({\m x}^*(t), {\m u}^*(t), {\g \lambda}^*(t)) \in
N_{\C{U}}(\m{u}^*(t)) \mbox{ for all } t \in \Omega_0.
\end{equation}
For any $\m{u} \in \C{U}$,
\[
N_\C{U}(\m{u}) = \{ \m{w} \in \mathbb{R}^m :
\m{w}\tr(\m{v} - \m{u}) \le 0 \mbox{ for all } \m{v} \in \C{U} \},
\]
while $N_\C{U}(\m{u}) = \emptyset$ if $\m{u} \not\in \C{U}$.

We will show in Proposition~\ref{equiv} that the first-order
optimality conditions (Karush-Kuhn-Tucker conditions)
for (\ref{D}) are equivalent to the existence of
%imply that when a constraint qualification holds
%\cite{NocedalWright2006}, the gradient of the Lagrangian vanishes.
%We will show that the existence of Lagrange multipliers satisfying the
%first-order optimality conditions is equivalent to the existence of
$\g{\lambda}_k \in \C{P}_{N-1}^n$, $1 \le k \le K$, such that
\begin{eqnarray}
\dot{\g \lambda}_{k}(\tau_i) &=&
-h\nabla_x H\left( {\m x}_k (\tau_i),{\m u}_{ki}, 
{\g \lambda}_k (\tau_i) \right), \quad
1 \leq i < N, \label{dcostate} \\[.05in]
\dot{\g \lambda}_k(1) &=&
-h\nabla_x H\left({\m x}_k (1),{\m u}_{kN}, 
{\g \lambda}_k (1) \right) +
\left(\g{\lambda}_k(1) - {\g \lambda}_{k+1}
(-1)\right)/\omega_N,
\label{dterminal}\\[.05in]
&& \quad \quad \mbox{where }
\g{\lambda}_{K+1}(-1) :=
\nabla C\left( \m{x}_{K}(1)\right) \nonumber \\
N_\C{U}(\m{u}_{ki}) &\ni&
-\nabla_u H\left({\m x}_k(\tau_i),{\m u}_{ki}, 
{\g \lambda}_k (\tau_i) \right),
\quad 1\leq i\leq N. \label{dcontrolmin}
\end{eqnarray}
Since the $K+1$ mesh interval does not exist,
(\ref{dterminal}) includes a definition for $\g{\lambda}_{K+1}(-1)$.
As we will see in Proposition~\ref{equiv},
$\g{\lambda}_k(-1)$ for $k \le K$ is the multiplier
associated with the continuity condition (\ref{continuity}).
Throughout the paper,
$\omega_i$, $1 \le i \le N$,
is the Radau quadrature weight associated with $\tau_i$.
The weight $\omega_i$ is the integral over $[-1, 1]$ of the
$i$-th Lagrange polynomial associated with the
Radau points $\tau_i$, $1 \le i \le N$.
This Lagrange polynomial of degree $N-1$ equals $1$ at the $i$-th Radau point
and $0$ at the other Radau points.
By \cite[Eq.~(3.134b)]{ShenTangWang11},
\[
\omega_i = \frac{2(1+\tau_i)}{[(1-\tau_i^2) \dot{P}_{N-1}^{(1,0)} (\tau_i)]^2},
\quad 1 \le i \le N-1, \quad \omega_N = \frac{2}{N^2},
\]
where $\dot{P}_{N-1}^{(1,0)} (\tau_i)$ is the derivative of the
Jacobi polynomial ${P}_{N-1}^{(1,0)}$ evaluated at it $i$-th zero.
Hence, the Radau quadrature weights are all positive.
Szeg\H{o} in \cite[Thm.~8.9.1]{Szego1939} provides tight estimates
for both the $\tau_i$ and the derivatives of the Jacobi polynomial at $\tau_i$
which yield a bound of the form
\[
\omega_i \le cN^{-1} \sqrt{1-\tau_i^2}, \quad 1 \le i < N.
\]
By \cite[Thm.~3.26]{ShenTangWang11},
\[
\int_{-1}^1 p(\tau) d\tau = \sum_{i=1}^N \omega_i p(\tau_i)
\]
for every $p \in \C{P}_{2N-2}$.
Taking $p = 1$, we see that the quadrature weights sum to 2.

Notice that the system (\ref{dcostate})--(\ref{dcontrolmin})
for the costate approximation does not contain a continuity
condition as in the primal discretization (\ref{D}), so the
costate approximation could be discontinuous across the mesh points.
Since $\C{P}_{N-1}$ has dimension $N$ and $1 \le k \le K$,
the approximation to a component of the costate has dimension $KN$,
while (\ref{dcostate})--(\ref{dterminal}) provides $KN$ equations.
Hence, if a continuity condition for the costate were imposed at the
mesh points, the system of equations (\ref{dcostate})--(\ref{dcontrolmin})
along with the continuity condition would be overdetermined.

The following two assumptions are utilized in the convergence analysis.
\begin{itemize}
%\item[(A1)]
%\m{Smoothness}.
%$\m{x}^*$ and $\g{\lambda}^* \in \C{C}^{\eta+1}$ for some $\eta \ge 2$.
\item[(A1)]
%\m{Coercivity}.
For some $\alpha > 0$, 
the smallest eigenvalue of the Hessian matrices $\nabla^2 C(\m{x}^*(1))$
and $ \nabla^2_{(x,u)} H(\m{x}^* (t), \m{u}^* (t), \g{\lambda}^* (t) )$
is greater than or equal to $\alpha$, uniformly for $t \in \Omega_0$.
\item[(A2)]
$K$ is large enough, or equivalently $h$ is small enough,
that $2hd_1 < 1$ and $2hd_2 < 1$, where
\begin{equation}\label{d1d2}
\hspace*{-.1in}
d_1 = \sup_{t \in \Omega_0} \|\nabla_x \m{f} (\m{x}^*(t), \m{u}^*(t))\|_\infty
\mbox{ and }
d_2 =
\sup_{t \in \Omega_0} \|\nabla_x \m{f} (\m{x}^*(t), \m{u}^*(t))\tr\|_\infty .
\end{equation}
Here $\| \cdot \|_\infty$ is the matrix sup-norm (largest absolute row sum).
\end{itemize}
\smallskip

The coercivity assumption (A1) ensures that the solution of the
discrete problem is a local minimizer.
The condition (A2) enters into the analysis of stability for
the perturbed dynamics;
as we will see, it ensures that for any choice of the discrete control,
there exists a unique choice for the discrete state that satisfies the
linearized dynamics.
In \cite[p. 804]{HagerHouRao16c}, where we analyze a Gauss collocation
scheme on a single interval, there is no $h$ in the analogue of (A2).
Hence, the convergence theory in \cite{HagerHouRao16c} only applies
to problems for which $\nabla_x \m{f} (\m{x}^*(t), \m{u}^*(t))$
is sufficiently small.
Consequently, the convergence theory for the $hp$-scheme is more robust
since it applies to a broader class of problems.

In addition to the two assumptions, the analysis utilizes four properties
of the Radau collocation scheme.
Let $\m{D}$ be the $N$ by $N+1$ matrix defined by
\begin{equation}\label{Ddef}
D_{ij} = \dot{L}_j (\tau_i), \;
\mbox{where }
L_j (\tau) := \prod^{N}_{\substack{l=0\\l\neq j}}
\frac{\tau-\tau_l}{\tau_j-\tau_l}, \;
1 \le i \le N \mbox{ and } 0 \le j \le N.
\end{equation}
The matrix $\m{D}$ is a differentiation matrix in the sense that
for any $\m{p} \in \mathbb{R}^{N+1}$,
$(\m{Dp})_i = \dot{p} (\tau_i)$, $1 \le i \le N$,
where $p \in \C{P}_N$ is the polynomial that satisfies
$p(\tau_j) = p_j$ for $0 \le j \le N$.
The submatrix $\m{D}_{1:N}$, consisting of the trailing $N$ columns of $\m{D}$,
has the following properties:
\smallskip
\begin{itemize}
\item [(P1)]
$\m{D}_{1:N}$ is invertible and
$\| \m{D}_{1:N}^{-1}\|_\infty = 2$.
\item [(P2)]
If $\m{W}$ is the diagonal matrix containing the Radau
quadrature weights $\g{\omega}$ on the diagonal, then the rows of the
matrix $[\m{W}^{1/2} \m{D}_{1:N}]^{-1}$ have Euclidean norm bounded by
$\sqrt{2}$.
\end{itemize}
\smallskip
The proof of (P1) and (P2) are given in Appendix~1.

%From the formula, the elements in the last row of
%$\m{D}_{1:N}^{-1}$ are the Radau quadrature weights, which are
%positive and sum to 2.
%Although elements in the earlier rows of $\m{D}_{1:N}^{-1}$ can
%be either positive or negative, we find numerically that their
%absolute sum is always less than 2.
%Similarly, the elements in the last row of
%$[\m{W}^{1/2} \m{D}_{1:N}]^{-1}$ are the square roots of
%the Radau quadrature weights.
%Hence, the Euclidean norm of the last row of
%$[\m{W}^{1/2} \m{D}_{1:N}]^{-1}$ is $\sqrt{2}$.
%Numerically, we find that Euclidean norm of the earlier rows is
%always less than $\sqrt{2}$.

There is a related matrix that enters into the convergence analysis of
the $hp$-scheme.
Let $\m{D}^\ddagger$ be the $N$ by $N$ matrix defined by
\begin{equation}\label{Ddager}
D_{ij}^\ddagger = - \left( \frac{\omega_j}{\omega_i} \right)
D_{ji}, \quad 1 \le i \le N, \quad 1 \le j \le N .
\end{equation}
The matrix $\m{D}^\ddagger$ arises in the analysis of the costate equation.
In Section~4.2.1 of \cite{GargHagerRao10a}, we introduce a matrix
$\m{D}^\dagger$ which is a differentiation matrix for the collocation
points $\tau_i$, $1 \le i \le N$.
That is, if $p$ is a polynomial of degree at most $N-1$ and
$\m{p}$ is the vector with components $p(\tau_i)$, $1 \le i \le N$,
then $(\m{D}^\dagger \m{p})_i = \dot{p}(\tau_i)$.
The matrix $\m{D}^\ddagger$ only differs from $\m{D}^\dagger$ in a
single entry: $D^\ddagger_{NN} = D^\dagger_{NN} - 1/\omega_N$.
As a result,
\begin{equation}\label{h282}
(\m{D}^{\ddag} \m{p})_i = \dot{p}(\tau_i), \quad
1 \le i < N, \quad
(\m{D}^{\ddag} \m{p})_N = \dot{p}(\tau_N) - p(1)/\omega_N.
\end{equation}
If $\m{D}^{\ddag} \m{p} = \m{0}$, then
$\dot{p}(\tau_i) = 0$ for $i < N$ by the first equality in (\ref{h282}).
Since $\dot{p}$ has degree $N-2$ and it vanishes at $N-1$ points,
$\dot{p}$ is identically zero and $p$ is constant.
By the final equation in (\ref{h282}), $p(1) = 0$ when
$\m{D}^\ddag \m{p} = \m{0}$, which implies that $p$ is identically zero.
This shows that $\m{D}^\ddagger$ is invertible.
We find that $\m{D}^\ddagger$ has the following properties:
\smallskip
\begin{itemize}
\item [(P3)]
$\m{D}^\ddagger$ is invertible and
$\| (\m{D}^\ddagger)^{-1}\|_\infty \le 2$.
\item [(P4)]
The rows of the
matrix $[\m{W}^{1/2} \m{D}^\ddagger]^{-1}$ have Euclidean norm bounded by
$\sqrt{2}$.
\end{itemize}
\smallskip

In Proposition~\ref{deriv_exact} at the end of the paper,
an explicit formula is given for the inverse of $\m{D}^\ddagger$.
However, it is not clear from the formula that
$\| (\m{D}^\ddagger)^{-1}\|_\infty$ is bounded by 2.
It is shown in Appendix~1, in inequality (\ref{Euclidean}),
that (P2) implies (P1).
By the same inequality, (P4) implies (P3).
Unlike (P1) where the norm $\| \m{D}_{1:N}^{-1}\|_\infty$ is 2 as shown in
Lemma~\ref{P1P2}, it is observed numerically that the norm
$\| (\m{D}^\ddagger)^{-1}\|_\infty$ is strictly less than 2,
and it approaches 2 in the limit as $N$ tends to infinity.
%Numerically, we find that the norms in (P3) and (P4) achieve
%their maximum in the first row of the matrix,
In the Appendix, we observe that for $N$ up to 300,
these norms increase monotonically towards the given bounds.
Again, the proof of (P4) for general $N$ is currently open.
Properties (P1)--(P4) differ from the assumptions (A1)--(A2)
in the sense that (A1)--(A2) only hold for certain control problems; while
(P1)--(P4) seem to hold in general.
%The properties (P1)--(P4) are stated as four separate properties since
%they are used in different ways in the analysis.
%however, (P2) implies (P1) and (P4) implies (P3) by the Schwarz inequality,
%as pointed out in the introduction of \cite{HagerMohapatraRao16b}.

In the analysis of the Gauss scheme \cite{HagerHouRao16c},
properties (P3) and (P4) follow immediately from (P1) and (P2)
since the analogue of $\m{D}^\ddagger$ in
\cite{HagerHouRao16c} is related to $\m{D}_{1:N}$ through an
exchange operation.
However, due to the asymmetry of the Radau collocation points and the
lower degree of the polynomials in the discrete adjoint system
(\ref{dcostate})--(\ref{dcontrolmin}), a corresponding relationship between
$\m{D}^\ddagger$ and $\m{D}_{1:N}$ in the Radau scheme does not seem to hold.
Nonetheless, the bounds in (P3) and (P4) are observed to be the same as the
bounds in (P1) and (P2).

Given a local minimizer $(\m{x}^*, \m{u}^*)$ of (\ref{P}), let
$\m{x}_k^*$, $\m{u}_k^*$, and $\g{\lambda}_k^*$ be the
state, control, and costate associated with the mesh interval $[t_{k-1}, t_k]$
and the change of variables $t =$ $t_{k-1/2} + h \tau$, and define
$t_{kj} = t_{k-1/2} + h \tau_j$.
The domain of
$\m{x}_k^*$, $\m{u}_k^*$, or $\g{\lambda}_k^*$ is $[-1, +1]$ where
$-1$ corresponds to $t_{k-1}$ and $+1$ corresponds to $t_k$.
We define the following related discrete variables:
\begin{equation}\label{optdiscrete}
\left. \begin{array}{lll}
\m{X}_{kj}^* = \m{x}_k^*(\tau_j) = \m{x}^*(t_{kj}),
& 0 \le j \le N, & 1 \le k \le K, \\
\m{U}_{kj}^* = \m{u}_k^*(\tau_j) = \m{y}^*(t_{kj}),
& 1 \le j \le N, & 1 \le k \le K, \\
\g{\Lambda}_{kj}^* = \g{\lambda}_k^*(\tau_j) = \g{\lambda}^*(t_{kj}),
& 0 \le j \le N, & 1 \le k \le K.
\end{array}
\right\}
\end{equation}

Suppose that $\m{x}_k^N \in \C{P}_N^n$, $1 \le k \le K$,
is a polynomial which is a stationary point of (\ref{D})
for some discrete controls $\m{u}_k^N$, and suppose that
$\g{\lambda}_k^N \in \C{P}_{N-1}^n$ satisfy
(\ref{dcostate})--(\ref{dcontrolmin}).
We define the following related discrete variables:
\[
\begin{array}{lll}
\m{X}_{kj}^N = \m{x}_k^N(\tau_j), & 0 \le j \le N, & 1 \le k \le K, \\
\m{U}_{kj}^N = \m{u}_{kj}^N, & 1 \le j \le N, & 1 \le k \le K, \\
\g{\Lambda}_{kj}^N = \g{\lambda}_k^N(\tau_j), & 0 \le j \le N, & 1 \le k \le K.
\end{array}
\]
Thus capital letters always refer to discrete variables.
As noted earlier, the costate polynomials associated with the
discrete problem are typically discontinuous across the mesh points,
and $\g{\Lambda}_{kN}^N \ne \g{\Lambda}_{k+1,0}^N$.

The convergence analysis only involves the smoothness
of the optimal state and associated costate on the interior
of each mesh interval.
Let $\C{H}^p(a, b)$ denote the Sobolev space of functions
with square integrable derivatives on $(a, b)$ through order $p$.
Let $\C{PH}^p(\Omega_0)$ denote the space of continuous
functions whose restrictions to $(t_{k-1}, t_k)$ are contained in
$\C{H}^p(t_{k-1}, t_k)$ for each $k$ between 1 and $K$
(piecewise $\C{H}^p$).
The norm on $\C{PH}^p(\Omega_0)$ is the same as the
norm on $\C{H}^p(\Omega_0)$ except that the integral is
computed over the interior of each mesh interval.
In this paper, the error bounds are expressed in terms of a seminorm
$| \cdot |_{\C{PH}^p(\Omega_0)}$ which only involves
the $p$-th order derivative:
\[
|\m{x}|_{\C{PH}^p(\Omega_0)} =
\left( \sum_{k=1}^K \int_{t_{k-1}}^{t_k} \left|\frac{d^p \m{x} (t)}
{dt^p} \right|^2 \; dt \right)^{1/2}.
\]
%
%with norm denoted $\| \cdot \|_{\C{PH}^p(\Omega_0; \mathbb{R}^n)}$.
%For $\m{x} \in \C{PH}^p(\Omega_0; \mathbb{R}^n)$,
%%
%\[
%\| \m{x} \|_{\C{PH}^p(\Omega_0;\, \mathbb{R}^n)} =
%\max_{1 \le k \le K} \|\m{x}\|_{\C{H}^p([t_{k-1}, t_k]; \, \mathbb{R}^n)}.
%\]
%
The following convergence result relative to the vector sup-norm
(largest absolute element) will be established.
\smallskip
\begin{theorem}\label{maintheorem}
If $(\m{x}^*, \m{u}^*)$ is a local minimizer for the continuous problem
$(\ref{P})$ with $\m{x}^*$ and
$\g{\lambda}^* \in \C{PH}^\eta (\Omega_0)$ for some $\eta \ge 2$,
and {\rm (A1)}, {\rm (A2)}, and {\rm (P4)} hold,
then for $N$ sufficiently large or for $h$ sufficiently small with $N \ge 2$,
the discrete problem $(\ref{D})$ has a local minimizer
and associated multiplier satisfying
$(\ref{dcostate})$--$(\ref{dcontrolmin})$, and we have
\begin{eqnarray}
&\max \left\{ \left\|{\bf X}^{N}-{\bf X}^*\right\|_\infty ,
\left\|{\bf U}^{N}-{\bf U}^{*}\right\|_\infty,
\left\|{\g \Lambda}^{N}-{\g \Lambda}^{*}\right\|_\infty \right\}
\nonumber\\
&\le h^{p-1}\left(\frac{c}{N}\right)^{p-1}
|{\mathbf{x}^{*}}|_{\C{PH}^p(\Omega_0)}
+ h^{q-1}\left(\frac{c}{N}\right)^{q-1.5}
|{\lmb^{*}}|_{\C{PH}^q(\Omega_0)},
\label{maineq}
\end{eqnarray}
where $p=\min(\eta, N+1)$, $q=\min(\eta, N)$,
and $c$ is independent of $h$, $N$, and $\eta$.
\end{theorem}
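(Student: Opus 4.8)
\emph{Sketch of proof.} The plan is to reformulate the first-order optimality conditions for (\ref{D}) as a single nonlinear system in the discrete unknowns $\g{\theta}^N = (\m{X}^N, \m{U}^N, \g{\Lambda}^N)$ and to show that the sampled continuous solution $\g{\theta}^* = (\m{X}^*, \m{U}^*, \g{\Lambda}^*)$ is an approximate zero of this system, so that an abstract Newton--Kantorovich-type local existence result (of the kind used in \cite{HagerHouRao16c}) produces a nearby exact discrete stationary point. The steps I would carry out are: (i) use Proposition~\ref{equiv} to identify stationary points of (\ref{D}) with solutions of the collocated dynamics and continuity conditions in (\ref{D}) together with the discrete adjoint system (\ref{dcostate})--(\ref{dterminal}) and the discrete minimum principle (\ref{dcontrolmin}), the last being a variational inequality because of the constraint $\m{u}_{ki} \in \C{U}$; (ii) bound the residual obtained on substituting $\g{\theta}^*$ into this system; (iii) show that the linearization of the system about $\g{\theta}^*$ is invertible with inverse bounded uniformly in $h$ and $N$; (iv) verify that the linearization is Lipschitz on a ball about $\g{\theta}^*$ whose radius exceeds (stability constant) $\times$ (residual); and (v) apply the abstract result to obtain $\g{\theta}^N$ with $\|\g{\theta}^N - \g{\theta}^*\|$ controlled by the residual, then use (A1) to upgrade this stationary point to a local minimizer.

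For the consistency estimate in step (ii), the residual of the collocated state equation at $\tau_i$ is the difference between $\dot{\m{x}}_k^*(\tau_i)$ and the value at $\tau_i$ of the derivative of the degree-$N$ polynomial interpolating $\m{x}_k^*$ at $\tau_0, \ldots, \tau_N$; after undoing the change of variables $t = t_{k-1/2} + h\tau$ and summing over mesh intervals, classical polynomial interpolation error bounds in Sobolev spaces give a contribution of order $h^{p-1}(c/N)^{p-1} |\m{x}^*|_{\C{PH}^p(\Omega_0)}$ with $p = \min(\eta, N+1)$. The residual of the discrete adjoint system is treated the same way, except that the costate polynomials have degree only $N-1$ and the terminal equation (\ref{dterminal}) carries the factor $1/\omega_N$ with $\omega_N = 2/N^2$; measuring this residual in the weighted Euclidean norm adapted to (P2) and (P4) and converting back to the sup-norm loses an extra factor of order $\sqrt{c/N}$, which produces the term $h^{q-1}(c/N)^{q-1.5} |\lmb^*|_{\C{PH}^q(\Omega_0)}$ with $q = \min(\eta, N)$.

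The crux is the uniform stability estimate in step (iii). The linearized system is a discrete linear--quadratic optimal control problem, and I would establish its invertibility in two stages. First, the linearized collocated dynamics, written in the form $(\m{I} - h\m{D}_{1:N}^{-1}\nabla_x\m{f})\m{X}_k = (\text{control terms and continuity data})$, can be solved for the discrete state: (P1) bounds $\m{D}_{1:N}^{-1}$, and $2hd_1 < 1$ from (A2) makes $h\m{D}_{1:N}^{-1}\nabla_x\m{f}$ a contraction, so a Neumann series gives a state-to-control solution operator bounded uniformly in $N$, with the weighted bound (P2) keeping this uniform in the $\m{W}^{1/2}$-weighted norm; the adjoint equation is solved backward analogously using $\m{D}^\ddagger$, (P3), (P4), and $2hd_2 < 1$. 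Second, the coercivity (A1) of the Hamiltonian and cost Hessians must be transported through this solution operator so that the reduced quadratic form on the discrete control space is positive definite with a constant independent of $h$ and $N$; here the $\sqrt{2}$ bounds on the rows of $[\m{W}^{1/2}\m{D}_{1:N}]^{-1}$ and $[\m{W}^{1/2}\m{D}^\ddagger]^{-1}$ are essential, since the natural discrete inner product is the quadrature form $\sum_i \omega_i(\cdot)$. Propagating (A1) to the discrete form while simultaneously absorbing the perturbation caused by replacing $(\m{x}^*, \m{u}^*, \g{\lambda}^*)$ by its samples is the principal technical obstacle, and because the Radau points are not symmetric the matrix $\m{D}^\ddagger$ is not an exchange-transform of $\m{D}_{1:N}$ (unlike the Gauss case in \cite{HagerHouRao16c}), so the costate half of this argument must be carried out directly.

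Finally, the Lipschitz property of the linearization in step (iv) follows from the hypothesis that the second derivatives of $\m{f}$ and $C$ are Lipschitz on the closure of $\C{O}$ and on $\C{B}_\rho(\m{x}^*(1))$, provided an a priori estimate confirms that the iterates remain in $\C{B}_\rho(\m{x}^*(t), \m{u}^*(t))$; this containment holds once the residual times the stability constant is smaller than $\rho$, which is exactly why $N$ must be large or $h$ small. With (ii)--(iv) in place, the abstract local existence result yields a discrete stationary point $\g{\theta}^N$ near $\g{\theta}^*$ whose distance to $\g{\theta}^*$ is bounded by the stability constant times the residual, i.e.\ by the right-hand side of (\ref{maineq}). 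Since (A1) is inherited at $\g{\theta}^N$ by continuity, the discrete second-order sufficient conditions hold there, so $\g{\theta}^N$ corresponds to a local minimizer of (\ref{D}) with multiplier satisfying (\ref{dcostate})--(\ref{dcontrolmin}), completing the proof.
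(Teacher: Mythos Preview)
Your outline matches the paper's proof: cast the KKT system as $\C{T}(\g{\theta}) \in \C{F}(\m{U})$, invoke the abstract result Proposition~\ref{prop}, bound the residual via interpolation (Lemma~\ref{residuallemma}), prove Lipschitz stability of $(\C{F}-\nabla\C{T}^*)^{-1}$ through the auxiliary linear--quadratic problem (Proposition~\ref{invertible}, Lemma~\ref{inf-bounds}) using (P1)--(P4) and (A1)--(A2), and finish with the second-order sufficient conditions. Two details in your sketch are misattributed and would cause trouble in execution.

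First, the stability constant $\gamma$ is \emph{not} uniform in $h$: in the norms the paper is forced to use, one gets only $\gamma = ch^{-1/2}$ (Lemma~\ref{inf-bounds}). The loss of $h^{-1/2}$ arises when one passes between the weighted $\omega$-norm on the right-hand side and the sup-norm on the state/costate across $K\sim h^{-1}$ mesh intervals via the Schwarz inequality; see (\ref{pandq}). Correspondingly the residual in the $\C{Y}$-norm carries an \emph{extra} $h^{1/2}$ --- Lemma~\ref{residuallemma} gives $h^{p-1/2}$ and $h^{q-1/2}$ --- and it is only the product $\gamma\|\g{\delta}\|$ that produces the exponents $h^{p-1}$ and $h^{q-1}$ in (\ref{maineq}). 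Verifying that $\gamma\epsilon < 1$ in Proposition~\ref{prop} also relies on this balance: $\epsilon$ is $O(h^{1/2})$ because every nonlinear term in $\C{T}$ carries an explicit $h$.

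Second, the extra $N^{-1/2}$ in the costate residual does \emph{not} come from the $1/\omega_N$ in (\ref{dterminal}). When $\C{T}_{3kN}$ is evaluated at $\g{\theta}^*$, the term $-\g{\lambda}_k^*(1)/\omega_N$ produced by (\ref{h83b}) cancels exactly against $\g{\Lambda}_{k+1,0}^*/\omega_N$ because $\g{\lambda}^*$ is continuous across mesh points; if you miss this cancellation you are left with an $O(N^2)$ factor and the estimate fails. The actual source of the exponent $q-1.5$ is that the costate interpolant $\g{\lambda}_k^I$ lies in $\C{P}_{N-1}^n$ and interpolates only at the $N$ Radau points, not at $\tau_0=-1$, so Lemma~\ref{interp} is unavailable and one falls back on the standard Radau interpolation bound \cite[(5.4.34)]{Canuto06}, which is weaker by $N^{-1/2}$; see (\ref{lambdak-final2}).
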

\smallskip

The proof of Theorem~\ref{maintheorem} begins in Section~\ref{abstract}
where the discrete first-order optimality conditions are formulated
as an inclusion of the form
$\C{T}(\m{X}, \m{U}, \g{\Lambda}) \in \C{F}(\m{U})$.
In Section~\ref{residual} a bound is obtained for the distance $d^*$ from
$\C{T}(\m{X}^*, \m{U}^*, \g{\Lambda}^*)$ to $\C{F}(\m{U}^*)$, where
$(\m{X}^*, \m{U}^*, \g{\Lambda}^*)$ denotes the optimal discrete variables
defined in (\ref{optdiscrete}).
This bound is based on an estimate given in Section~\ref{sect_interp}
for the $\C{H}^1$ approximation error of the polynomial that
interpolates $\m{x}^*$ at $\tau_i$, $0 \le i \le N$.
%The bound for $d^*$ becomes the right side of (\ref{maineq}).
The remainder of the paper focuses on showing that the bound for $d^*$
is also a bound for the distance from $(\m{X}^*, \m{U}^*, \g{\Lambda}^*)$
to a solution of the inclusion
$\C{T}(\m{X}, \m{U}, \g{\Lambda}) \in \C{F}(\m{U})$.
The analysis is based on Proposition~\ref{prop}, where it is shown that such
a bound can be obtained if a linearized version of the original inclusion
is stable under perturbations.
More precisely, we need to show that the problem of finding
$(\m{X}, \m{U}, \g{\Lambda})$ such that
\[
\nabla \C{T}(\m{X}^*, \m{U}^*, \g{\Lambda}^*)[\m{X}, \m{U}, \g{\Lambda}] +
\m{Y} \in \C{F}(\m{U})
\]
has a unique solution which depends Lipschitz continuously
on the perturbation $\m{Y}$.
This analysis, which utilizes
assumptions (A1)--(A2) and properties (P1)--(P4),
begins in Section~\ref{inverse} where perturbations
in the linearized state and costate discrete dynamics are analyzed.
In Section~\ref{Invert} it is shown that solving the linearized inclusion is
equivalent to solving a quadratic program,
where perturbations in the inclusion appear as
linear terms in the quadratic program;
the strong convexity assumption (A1) implies the
existence of a unique solution to the quadratic program, which in turn
implies the existence of a unique solution to the inclusion.
Finally, in Section~\ref{Lip} the unique solution of the linearized inclusion
is shown to depend Lipschitz continuously on the perturbation.
This Lipschitz property and the bound for $d^*$ are combined with
Proposition~\ref{prop} to obtain (\ref{maineq}).
The tightness and possible extensions
of the error bound (\ref{maineq}) are explored in
Section~\ref{numerical} using some problems with known solutions.
In the proof of Theorem~\ref{maintheorem}, we need to make the right
side of (\ref{maineq}) sufficiently small to establish the existence
of the claimed solution to the discrete problem.
The conditions $\eta \ge 2$ and $N \ge 2$ in the statement of the
theorem ensure that as $h$ goes to zero, $h^{p-1}$ and $h^{q-1}$ go to zero,
and as $N$ tends to infinity, $(c/N)^{p-1}$ and $(c/N)^{q-1.5}$ go to zero.

Since the discrete costate could be discontinuous across a mesh point,
Theorem~\ref{maintheorem} implies convergence of the discrete costate
on either side of the mesh point to the continuous costate at the mesh point.
The discrete problem provides an estimate for the optimal control at $t = 1$
in the continuous problem,
but not at $t = 0$ since this is not a collocation point.
Due to the strong convexity assumption (A1),
an estimate for the discrete control at $t = 0$ can be obtained from
the minimum principle (\ref{controlmin}) since the initial state
is given, while we have an estimate for the associated costate at $t = 0$.
Alternatively, polynomial interpolation could be used to obtain
estimates for the optimal control at $t = 0$.

In a recent paper \cite{HagerMohapatraRao16b}, where we analyze a Gauss
collocation scheme on a single interval, $p = q = \min(\eta, N+1)$.
The differences between Radau and Gauss collocation are
due to the asymmetry of the Radau points, and the asymmetry in the
Radau first-order optimality conditions; that is, for the Radau points,
$\g{\lambda}_k \in \C{P}_{N-1}^n$
%(\ref{dcostate})--(\ref{dcontrolmin}),
while $\m{x}_k \in \C{P}_{N}^n$. % in (\ref{D}).

{\bf Notation.}
We let $\Omega$ denote the interval $[-1, 1]$, while
$\Omega_0$ is the interval $[0, 1]$.
Let $\C{P}_N$ denote the space of polynomials of degree at most $N$,
while $\C{P}_N^0$ is the subspace consisting of polynomials in
$\C{P}_N$ that vanish at $t = -1$ and $t = 1$.
%Let $P_N$ be the Legendre polynomial of degree $N$.
%If the $i$-th smallest root of the $N-1$ degree polynomial
%%
%\[
%\frac{P_N(\tau) + P_{N-1}(\tau)}{1 + \tau},
%\]
%%
%is denoted $\sigma_i$,
%then the flipped Radau collocation points are given by $\tau_N = 1$ and
%$\tau_{i} = -\sigma_{N-i}$, $1 \le i \le N-1$.
%The associated quadrature weights are
%%
%\[
%\omega_N = \frac{2}{N^2} \quad \mbox{and} \quad
%\omega_i = \frac{1}{(1+\tau_i)[\dot{P}_{N-1}(-\tau_i)]^2},
%\quad 1 \le i \le N-1.
%\]
%%
%Note that $\omega_i > 0$ for each $i$.
%%
%Recall \cite[p. 338--343]{Hildebrand56} that for any $p \in \C{P}_{2N-2}$,
%we have
%%
%\[
%\int_{-1}^1 p(\tau) d\tau = \sum_{i=1}^N \omega_i p(\tau_i) .
%\]
%%
The meaning of the norm $\| \cdot \|_\infty$ is based on context.
If $\m{x} \in \C{C}^0 (\Omega)$, then
$\|\m{x}\|_\infty$ denotes the maximum of $|\m{x}(t)|$ over
$t \in [-1, 1]$, where $| \cdot|$ is the Euclidean norm.
For a vector $\m{v} \in \mathbb{R}^m$,
$\|\m{v}\|_\infty$ is the maximum of $|v_i|$ over $1 \le i \le m$.
If $\m{A} \in \mathbb{R}^{m \times n}$, then $\|\m{A}\|_\infty$
is the largest absolute row sum (the matrix norm induced by the
$\ell_\infty$ vector norm).
We let $|\m{A}|$ denote the matrix norm induced by the Euclidean vector norm.
Throughout the paper, the index $k$ is used for the mesh interval,
while the indices $i$ and $j$ are associated with collocation points.
If $\m{p} \in \mathbb{R}^{KNn}$, then $\m{p}_k$ for
$1 \le k \le K$ refers to vector with components $\m{p}_{kj} \in \mathbb{R}^n$,
for $1 \le j \le N$.
The dimension of the identity matrix $\m{I}$ is often clear from context;
when necessary, the dimension of $\m{I}$ is specified by a subscript.
For example, $\m{I}_n$ is the $n$ by $n$ identity matrix.
The gradient is denoted $\nabla$, while $\nabla^2$ denotes the Hessian;
subscripts indicate the differentiation variables.
Throughout the paper, $c$ is a generic constant which has different
values in different equations.
The value of $c$ is always independent of $h$, $N$, and $\eta$.
The vector $\m{1}$ has all entries equal to one, while
the vector $\m{0}$ has all entries equal to zero;
again, their dimension should be clear from context.
If $\m{D}$ is the differentiation matrix introduced in (\ref{Ddef}), then
$\m{D}_j$ is the $j$-th column of $\m{D}$ and
$\m{D}_{i:j}$ is the submatrix formed by columns $i$ through $j$.
We let $\otimes$ denote the Kronecker product.
If $\m{U} \in \mathbb{R}^{m \times n}$ and $\m{V} \in \mathbb{R}^{p \times q}$,
then $\m{U} \otimes \m{V}$ is the $mp$ by $nq$ block matrix whose
$(i,j)$ block is $u_{ij} \m{V}$.
We let $\C{L}^2 (\Omega)$ denote the usual space of square integrable functions
on $\Omega$, while $\C{H}^p(\Omega)$ is the Sobolev space consisting
of functions with square integrable derivatives through order $p$.
The seminorm in $\C{H}^p(\Omega)$, corresponding to the $\C{L}^2(\Omega)$
norm of the $p$-order derivatives, is denoted $| \cdot |_{\C{H}^p(\Omega)}$.
The subspace of $\C{H}^1(\Omega)$ corresponding to functions that vanish
at $t = -1$ and $t = 1$ is denoted $\C{H}_0^1(\Omega)$.
%We let $\C{H}^p(\Omega )$ denote the $n$-fold Cartesian
%product $\C{H}^p(\Omega) \times \ldots \times \C{H}^p(\Omega)$.

%--------------------------------------------------------------------------
\section{Abstract setting}
\label{abstract}
%--------------------------------------------------------------------------
Given a feasible point for the discrete problem (\ref{D}),
define $\m{X}_{kj} = \m{x}_k(\tau_j)$ and $\m{U}_{ki} = \m{u}_{ki}$.
As noted earlier, $\m{D}$ is a differentiation matrix in the sense that
\[
\sum_{j=0}^N D_{ij} \m{X}_{kj} = \dot{\m{x}}_k (\tau_i), \quad
1 \le i \le N.
\]
Hence, the discrete problem (\ref{D}) can be reformulated as
\begin{equation}\label{nlp}
\left. \begin{array}{cll}
\mbox {minimize} &C(\m{X}_{KN})&\\
\mbox {subject to} & \sum_{j=0}^N D_{ij} \m{X}_{kj} =
h \m{f}(\m{X}_{ki}, \m{U}_{ki}), \quad \m{U}_{ki} \in \C{U}, &1 \le i \le N,\\
&\m{X}_{k0} =\m{X}_{k-1,N},& 1 \le k \le K,
\end{array} \right\}
\end{equation}
where $\m{X}_{0N} = \m{a}$, the starting condition.
%Note that $\m{X}_{k0}$ is not a variable in the problem,
%it stands for $\m{X}_{k-1,N}$.

%At a stationary point for (\ref{nlp}), the gradient of the
%associated Lagrangian vanishes.
We introduce multipliers $\g{\mu}_{ki}$ associated with the
constraints in (\ref{nlp}) and write the Lagrangian as
%%
%\[
%\mathcal{L}({\g \lambda}, {\bf X},{\bf U}) =
%C\left(\mathbf{X}_{KN}\right)
%+\sum_{k=1}^K\sum_{i=1}^N\left\langle\g{\lambda}_{ki},
%h{\bf f}({\bf X}_{ki},{\bf U}_{ki})
%-\sum_{j=0}^{N}{D}_{ij}{\bf X}_{kj} \right\rangle .
%\]
%%
%%
\begin{eqnarray*}
&\mathcal{L}({\g \mu}, {\bf X},{\bf U}) =& \\
&C\left(\mathbf{X}_{KN}\right)
+\sum_{k=1}^K\sum_{i=1}^N\left\langle\g{\mu}_{ki},
h{\bf f}({\bf X}_{ki},{\bf U}_{ki})
-\sum_{j=0}^{N}{D}_{ij}{\bf X}_{kj} \right\rangle
 + \sum_{k=1}^K\left\langle{\g \mu}_{k0}, 
\left({\bf X}_{k-1,N}-{\bf X}_{k0}\right)\right\rangle.&
\end{eqnarray*}
The first-order optimality conditions for \eqref{nlp},
often called the Karush-Kuhn-Tucker (KKT) conditions, lead to the
following relations (we show the variable with which we differentiate
the Lagrangian followed by the associated condition):
\begin{eqnarray}
\m{X}_{k0}\;&\Rightarrow&
\sum_{i=1}^N{D}_{i0}{\g \mu}_{ki}=-\g{\mu}_{k0},
\label{NC0} \\
\m{X}_{kj}\;&\Rightarrow&
\sum_{i=1}^N {D}_{ij}\g{\mu}_{ki} = h\nabla_x H
({\bf X}_{kj},{\bf U}_{kj},\g{\mu}_{kj}), \quad 1 \le j < N, \label{NC1} \\
\m{X}_{kN}&\Rightarrow&
\sum_{i=1}^N{D}_{iN}{\g \mu}_{ki} =
h \nabla_x H({\bf X}_{kN},{\bf U}_{kN},\g{\mu}_{kN})
+ \g{\mu}_{k+1,0}, \label{NC2} \\
&& \quad \quad \g{\mu}_{K+1,0} := \nabla C(\m{X}_{KN}), \label{NC2b} \\
\m{U}_{ki}\;&\Rightarrow&
-\nabla_u H\left({\bf X}_{ki},{\bf U}_{ki},{\g \mu}_{ki}\right)
\in N_{\C{U}}(\m{U}_{ki}).
\label{NC3}
\end{eqnarray}
%

%The first-order optimality conditions for \eqref{nlp} imply that
%for $1 \le k \le K$,
%%
%\begin{eqnarray}
%&&
%\sum_{i=1}^N {D}_{ij}\g{\mu}_{ki} = h\nabla_x H
%({\bf X}_{kj},{\bf U}_{kj},\g{\mu}_{kj}), \quad 1 \le j < N, \label{NC1} \\
%&&
%\sum_{i=1}^N{D}_{iN}{\g \mu}_{ki} =
%h \nabla_x H({\bf X}_{kN},{\bf U}_{kN},\g{\mu}_{kN}) + \g{\Lambda}_{k+1,0},
%\label{NC2} \\
%&& \g{\Lambda}_{k0} := -\sum_{i=1}^N{D}_{i0}{\g \mu}_{ki},
%\quad k < K, \quad \g{\Lambda}_{K+1,0} := \nabla C(\m{X}_{KN}),
%\label{NC0} \\
%&&
%-\nabla_u H\left({\bf X}_{ki},{\bf U}_{ki},{\g \mu}_{ki}\right)
%\in N_{\C{U}}(\m{U}_{ki}).
%\label{NC3}
%\end{eqnarray}
%%
%Observe that (\ref{NC0}) and (\ref{NC2}) corresponds to differentiation
%by $\m{X}_{kj}$ for $j = 0$ and $j = N$ respectively, while
%(\ref{NC1}) corresponds to the remaining $j$.

We first relate the KKT multipliers in (\ref{NC0})--(\ref{NC3})
to the polynomials satisfying (\ref{dcostate})--(\ref{dcontrolmin}).
\begin{proposition}\label{equiv}
The multipliers $\g{\mu}_k \in \mathbb{R}^{Nn}$
satisfy $(\ref{NC0})$--$(\ref{NC3})$ if and only if the polynomial
$\g{\lambda}_k \in \C{P}_{N-1}^n$ given by
$\g{\lambda}_k(\tau_i) = \g{\mu}_{ki}/\omega_i$,
$1 \le i \le N$, satisfies $(\ref{dcostate})$--$(\ref{dcontrolmin})$.
Moreover, $\g{\mu}_{k0} = \g{\lambda}_k(-1)$.
\end{proposition}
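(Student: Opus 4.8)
The plan is to verify directly that the change of variable $\g{\lambda}_k(\tau_i) = \g{\lambda}_{ki}/\omega_i$ carries the KKT system (\ref{NC0})--(\ref{NC3}) onto the costate system (\ref{dcostate})--(\ref{dcontrolmin}), and that the noncollocated value $\g{\lambda}_k(-1)$ of the interpolating polynomial equals $\g{\lambda}_{k0}$. The central identity to be exploited is the relation between the two differentiation matrices, namely that $\m{D}^\ddagger$ defined in (\ref{Ddager}) satisfies $D^\ddagger_{ij} = -(\omega_j/\omega_i)D_{ji}$, together with the interpretation of $\m{D}^\ddagger$ furnished by (\ref{h282}): if $p \in \C{P}_{N-1}$ and $\m{p}$ is the vector of its values at the collocation points, then $(\m{D}^\ddagger\m{p})_i = \dot p(\tau_i)$ for $i<N$ and $(\m{D}^\ddagger\m{p})_N = \dot p(\tau_N) - p(1)/\omega_N$.

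First I would observe that since $\g{\lambda}_k \in \C{P}_{N-1}^n$ is determined componentwise by its $N$ values $\g{\lambda}_k(\tau_i)$, $1 \le i \le N$, the prescription $\g{\lambda}_k(\tau_i) = \g{\lambda}_{ki}/\omega_i$ is a bijection between vectors $(\g{\lambda}_{k1},\ldots,\g{\lambda}_{kN}) \in \mathbb{R}^{Nn}$ and polynomials in $\C{P}_{N-1}^n$; equivalently $\g{\lambda}_{ki} = \omega_i\,\g{\lambda}_k(\tau_i)$. Next, for $1 \le j < N$ I would rewrite the left side of (\ref{NC1}): divide by $\omega_j$ and substitute $\g{\lambda}_{ki} = \omega_i\g{\lambda}_k(\tau_i)$ to get $\sum_{i=1}^N (D_{ij}\omega_i/\omega_j)\g{\lambda}_k(\tau_i) = -\sum_{i=1}^N D^\ddagger_{ji}\g{\lambda}_k(\tau_i) = -\dot{\g{\lambda}}_k(\tau_j)$ by (\ref{h282}). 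Since $\g{\lambda}_{kj} = \omega_j\g{\lambda}_k(\tau_j)$, the right side $h\nabla_x H(\m{X}_{kj},\m{U}_{kj},\g{\lambda}_{kj})/\omega_j$ equals $h\nabla_x H(\m{X}_{kj},\m{U}_{kj},\g{\lambda}_k(\tau_j))$ because $\nabla_x H$ is linear in the costate argument; hence (\ref{NC1}) becomes exactly (\ref{dcostate}). For $j = N$ the same manipulation applied to (\ref{NC2}), using the second equation in (\ref{h282}) which now carries the extra term $-\g{\lambda}_k(1)/\omega_N$, yields $-\dot{\g{\lambda}}_k(1) + \g{\lambda}_k(1)/\omega_N = h\nabla_x H(\cdots) + \g{\lambda}_{k+1,0}/\omega_N$; rearranging and interpreting $\g{\lambda}_{k+1,0}/\omega_N$ as $\g{\lambda}_{k+1}(-1)/\omega_N$ (to be justified below, with $\g{\lambda}_{K+1}(-1) := \nabla C(\m{X}_{KN})$ matching (\ref{NC2b})) gives (\ref{dterminal}). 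Condition (\ref{NC3}) is immediate since $N_\C{U}$ is a cone and $\omega_i > 0$, so scaling the costate argument by $1/\omega_i$ and the inclusion membership by $1/\omega_i$ preserves (\ref{NC3}), producing (\ref{dcontrolmin}).

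It remains to identify $\g{\lambda}_{k0}$ with $\g{\lambda}_k(-1)$, and this is the step I expect to require the most care. The idea is that (\ref{NC0}) reads $\g{\lambda}_{k0} = -\sum_{i=1}^N D_{i0}\g{\lambda}_{ki} = -\sum_{i=1}^N \omega_i D_{i0}\g{\lambda}_k(\tau_i)$, so I must show $\sum_{i=1}^N \omega_i D_{i0}\,\g{\lambda}_k(\tau_i) = -\g{\lambda}_k(-1)$ for every $\g{\lambda}_k \in \C{P}_{N-1}^n$. Componentwise this is the scalar claim $\sum_{i=1}^N \omega_i \dot L_0(\tau_i) p(\tau_i) = -p(-1)$ for all $p \in \C{P}_{N-1}$, where $L_0$ is the Lagrange basis polynomial of (\ref{Ddef}) associated with $\tau_0 = -1$. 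To prove it, note $L_0 p \in \C{P}_{2N-1}$; one clean route is to write $L_0(\tau)p(\tau)$ and use the Radau quadrature exactness on $\C{P}_{2N-2}$ applied to $(L_0 p)'(\tau) = \dot L_0(\tau)p(\tau) + L_0(\tau)\dot p(\tau)$, whose exact integral is $(L_0 p)(1) - (L_0 p)(-1) = 0 - p(-1)$ since $L_0(1) = 0$ and $L_0(-1) = 1$; the quadrature sum $\sum_{i=1}^N \omega_i[\dot L_0(\tau_i)p(\tau_i) + L_0(\tau_i)\dot p(\tau_i)]$ has vanishing second group because $L_0(\tau_i) = 0$ for $1 \le i \le N$, leaving $\sum_{i=1}^N \omega_i \dot L_0(\tau_i)p(\tau_i) = -p(-1)$, provided $(L_0 p)' \in \C{P}_{2N-2}$, which holds since $\deg(L_0 p) \le 2N-1$. (One should double-check the degree bookkeeping: $L_0$ has degree $N$, $p$ degree $N-1$, product degree $\le 2N-1$, derivative degree $\le 2N-2$, exactly on the boundary of quadrature exactness — this is the delicate point.) This establishes $\g{\lambda}_{k0} = \g{\lambda}_k(-1)$, which in turn legitimizes the substitution $\g{\lambda}_{k+1,0} = \g{\lambda}_{k+1}(-1)$ used above in deriving (\ref{dterminal}), and with $\g{\lambda}_{K+1,0} = \nabla C(\m{X}_{KN})$ from (\ref{NC2b}) matching the convention in (\ref{dterminal}). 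Since every step is reversible — each substitution is a bijection and each quadrature identity an equality — the equivalence holds in both directions, completing the proof.
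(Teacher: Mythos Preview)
Your proof is correct. The derivations of (\ref{dcostate}), (\ref{dterminal}), and (\ref{dcontrolmin}) from (\ref{NC1})--(\ref{NC3}) match the paper's essentially line for line, using the same change of variable and the same appeal to (\ref{Ddager}) and (\ref{h282}).

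Where you differ is in the identification $\g{\lambda}_{k0}=\g{\lambda}_k(-1)$. The paper first uses the identity $\m{D}\m{1}=\m{0}$ to replace $D_{i0}$ by $-\sum_{j\ge 1}D_{ij}$, rewrites the resulting double sum in terms of $\m{D}^\ddagger$, applies (\ref{h282}) to obtain $\g{\Lambda}_{k0}=\g{\lambda}_k(1)-\sum_i\omega_i\dot{\g{\lambda}}_k(\tau_i)$, and then invokes Radau exactness on $\dot{\g{\lambda}}_k\in\C{P}_{N-2}^n$. You instead apply Radau exactness directly to $(L_0 p)'\in\C{P}_{2N-2}$ and kill the cross term via $L_0(\tau_i)=0$. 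Your route is a little more self-contained for this proposition; the paper's route has the side benefit of producing along the way the summed identity (their (\ref{nc0})) that is reused later as the $\C{T}_4$ component of the optimality map.
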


\begin{proof}
We start with multipliers $\g{\mu}_k$ satisfying (\ref{NC0})--(\ref{NC3}).
Define $\g{\Lambda}_{ki} = \g{\mu}_{ki}/\omega_i$ for $1 \le i \le N$,
and let $\g{\lambda}_k \in \C{P}_{N-1}^n$ be the polynomial that satisfies
$\g{\lambda}_k(\tau_i) = \g{\Lambda}_{ki}$.
Also, set $\g{\Lambda}_{k0} = \g{\mu}_{k0}$.
%Let $\g{\Lambda}_{ki} = \g{\mu}_{ki}/\omega_i$ denote the
%transformed discrete dual variable introduced in the statement
%of the proposition,
%\cite{GargHagerRao11a,GargHagerRao10a},
In terms of $\g{\Lambda}_{ki}$ and the matrix
$D_{ij}^\ddagger = - \omega_j D_{ji}/\omega_i$,
%$1 \le i \le N$ and $1 \le j \le N$,
%introduced in (\ref{Ddager}),
the equations (\ref{NC1}), (\ref{NC2}), and (\ref{NC3}) become
\begin{eqnarray}
%eq11: k = N case
\sum_{j=1}^N {D}_{ij}^\ddagger\g{\Lambda}_{kj} &=& -h\nabla_x H
({\bf X}_{ki},{\bf U}_{ki},\g{\Lambda}_{ki}), \quad 1 \le i < N, \label{nc1} \\
\sum_{i=1}^N{D}_{Ni}^\ddagger{\g \Lambda}_{ki} &=&
-[h \nabla_x H({\bf X}_{kN},{\bf U}_{kN},\g{\Lambda}_{kN})
+ \g{\Lambda}_{k+1,0}/\omega_N],
\label{nc2} \\
N_{\C{U}}(\m{U}_{ki}) &\ni&
-\nabla_u H({\bf X}_{ki},{\bf U}_{ki},\g{\Lambda}_{ki}), \quad
1 \le i \le N.
\label{nc3}
\end{eqnarray}
%

%Now let us consider (\ref{NC0}).
Since the polynomial that is identically equal to $\m{1}$
has derivative $\m{0}$ and since $\m{D}$ is a differentiation matrix,
we have $\m{D1} = \m{0}$,
which implies that ${\bf D}_{0}=-\sum_{j=1}^N{\bf D}_j$,
where $\m{D}_j$ is the $j$-th column of $\m{D}$.
Hence, the first definition in \eqref{NC0} can be written
\begin{eqnarray}
\g{\Lambda}_{k0} &=&
-\sum_{i=1}^N\g{\mu}_{ki} D_{i0}
=\sum_{i=1}^N\sum_{j=1}^N\g{\mu}_{ki}D_{ij} =
\sum_{i=1}^N\sum_{j=1}^N \omega_j
\left( \frac{\g{\mu}_{ki}}{\omega_i} \right)
(\omega_i D_{ij}/\omega_j) \nonumber \\
&=& -\sum_{i=1}^N\sum_{j=1}^N \omega_i D_{ij}^\ddag\g{\Lambda}_{kj}
\label{eq06} \\
&=& \g{\Lambda}_{k+1,0} + h
\sum_{i=1}^N \omega_i \nabla_x H(\m{X}_{ki}, \m{U}_{ki}, \g{\Lambda}_{ki}) ,
\label{nc0}
\end{eqnarray}
where (\ref{nc0}) is due to (\ref{nc1})--(\ref{nc2}).

%&=&{\g \Lambda}_{0}^{(k+1)}+\frac{h}{2}\sum_{i=1}^N\omega_i
%\nabla_XH\left({\bf X}_i^{(k)},
%{\bf U}_i^{(k)},{\g \Lambda}_i^{(k)}\right),
%
As noted in (\ref{h282}),
\begin{eqnarray}
\sum_{j=1}^N D_{ij}^\ddag\g{\Lambda}_{kj} &=&
\dot{\g{\lambda}}_k(\tau_i), \quad 1 \le i < N, \quad \mbox{and}\label{h-2} \\
\sum_{j=1}^N D_{Nj}^\ddag\g{\Lambda}_{kj} &=&
\dot{\g{\lambda}}_k (1) - \g{\lambda}_k(1)/\omega_N. \label{h-1}
\end{eqnarray}
This substitution in (\ref{eq06}) yields
\begin{equation}\label{h0}
\g{\Lambda}_{k0} =
\g{\lambda}_k (1) - \sum_{i=1}^N \omega_i \dot{\g{\lambda}}_k(\tau_i) .
\end{equation}
Since $\dot{\g{\lambda}}_k \in \C{P}_{N-2}^n$ and
$N$-point Radau quadrature is exact for these polynomial, we have
\begin{equation}\label{h1}
\sum_{i=1}^N \omega_i \dot{\g{\lambda}}_k(\tau_i) =
\int_{-1}^{1} \dot{\g{\lambda}}_k (\tau) d\tau =
\g{\lambda}_k(1) - \g{\lambda}_k(-1) .
\end{equation}
Combine (\ref{h0}) and (\ref{h1}) to obtain
\begin{equation}\label{muk}
\g{\Lambda}_{k0} = \g{\lambda}_k (-1).
\end{equation}

Let $\m{x}_k \in \C{P}_N^n$ be the polynomial that satisfies
$\m{x}_k (\tau_j) = \m{X}_{kj}$ for all $0 \le j \le N$.
By (\ref{h-2}), (\ref{dcostate}) is equivalent to (\ref{nc1})
which is equivalent to (\ref{NC1}) after a change of variables.
By (\ref{h-1}) and (\ref{muk}), (\ref{dterminal})
is equivalent to (\ref{nc2}), which is equivalent to (\ref{NC2}) after
a change of variables.
Finally, (\ref{dcontrolmin}) is the same as
(\ref{nc3}) which is equivalent to (\ref{NC3}) after a change of variables.
The equivalence between $\g{\Lambda}_{k0}$ and $\g{\lambda}_k(-1)$ was
derived in (\ref{muk}).
This shows that the polynomial $\g{\lambda}_k(\tau)$ satisfies
(\ref{dcostate})--(\ref{dcontrolmin}).
The converse of the proposition follows by reversing all the steps
in the derivation.
\end{proof}

The dynamics for (\ref{nlp}),
the first-order optimality conditions (\ref{nc1})--(\ref{nc3}),
the formula (\ref{nc0}) for $\g{\Lambda}_{k0}$,
and the terminal costate condition (\ref{NC2b})
can be written as $\C{T}(\m{X}, \m{U}, \g{\Lambda}) \in \C{F}(\m{U})$ where
\begin{eqnarray}
\C{T}_{1ki}(\m{X}, \m{U}, \g{\Lambda}) &=&
\left( \sum_{j=0}^{N}{D}_{ij}{\bf X}_{kj} \right)
-h {\bf f}({\bf X}_{ki},{\bf U}_{ki}),
\quad 1\leq i\leq N ,\label{T1}\\
\C{T}_{2k}(\m{X}, \m{U}, \g{\Lambda}) &=&
{\bf X}_{k0}-\m{X}_{k-1,N}, \label{T2}\\
\C{T}_{3ki}(\m{X}, \m{U}, \g{\Lambda}) &=&
\left( \sum_{j=1}^{N}{D}_{ij}^\ddagger{\g \Lambda}_{kj} \right) +
h \nabla_x H({\bf X}_{ki},{\bf U}_{ki}, {\g \Lambda}_{ki}),
\quad 1 \leq i < N , \label{T3}\\
\C{T}_{3kN}(\m{X}, \m{U}, \g{\Lambda}) &=&
\sum_{j=1}^{N}{D}_{Nj}^\ddagger{\g \Lambda}_{kj}
+h \nabla_x H\left({\m X}_{kN},{\m U}_{kN}, {\g \Lambda}_{kN}\right)
+\g{\Lambda}_{k+1,0}/\omega_N,\label{TN}\\
\C{T}_{4k}(\m{X}, \m{U}, \g{\Lambda}) &=&
\g{\Lambda}_{k0} - \g{\Lambda}_{k+1,0} - h \sum_{i=1}^N
\omega_i \nabla_x H(\m{X}_{ki}, \m{U}_{ki}, \g{\Lambda}_{ki}),\label{T4}\\
\C{T}_{5}(\m{X}, \m{U}, \g{\Lambda}) &=&
\nabla C(\m{X}_{KN}) - \g{\Lambda}_{K+1,0}, \label{T5}\\
\C{T}_{6ki}(\m{X}, \m{U}, \g{\Lambda}) &=&
-h \nabla_u H({\bf X}_{ki}, {\bf U}_{ki}, {\g \Lambda}_{ki}),
\quad 1\leq i\leq N , \label{T6}
\end{eqnarray}
%
%%
%\[
%(\C{T}_1, \C{T}_2, \C{T}_3, \C{T}_4) (\m{X}, \m{U}, \g{\Lambda}) \in
%\mathbb{R}^{KNn} \times \mathbb{R}^{KNn} \times \mathbb{R}^{KNm} \times
%\mathbb{R}^n.
%\]
%%
%The four components of $\C{T}$ are defined as
%%
%\begin{eqnarray*}
%\C{T}_{1ki}(\m{X}, \m{U}, \g{\Lambda}) &=&
%\left( \sum_{j=0}^{N}{D}_{ij}{\bf X}_{kj} \right)
%-h {\bf f}({\bf X}_{ki},{\bf U}_{ki}),
%\quad 1\leq i\leq N ,\\
%%\C{T}_{2k}(\m{X}, \m{U}, \g{\Lambda}) &=&
%%{\bf X}_{k0}-\m{X}_{k-1,N}, \\
%%\C{T}_{3k}(\m{X}, \m{U}, \g{\Lambda}) &=&
%%\g{\Lambda}_{k0} - \g{\Lambda}_{k+1,0} - h \sum_{i=1}^N
%%\omega_i \nabla_x H(\m{X}_{ki}, \m{U}_{ki}, \g{\Lambda}_{ki}),\\
%\C{T}_{2ki}(\m{X}, \m{U}, \g{\Lambda}) &=&
%\left( \sum_{j=1}^{N}{D}_{ij}^\ddagger{\g \Lambda}_{kj} \right) +
%h \nabla_x H({\bf X}_{ki},{\bf U}_{ki}, {\g \Lambda}_{ki}),
%\quad 1 \leq i < N , \\
%\C{T}_{2kN}(\m{X}, \m{U}, \g{\Lambda}) &=&
%\sum_{j=1}^{N}{D}_{Nj}^\ddagger{\g \Lambda}_{kj}
%+h \nabla_x H\left({\m X}_{kN},{\m U}_{kN}, {\g \Lambda}_{kN}\right)
%+\g{\Lambda}_{k+1,0}/\omega_N,\\
%\C{T}_{3ki}(\m{X}, \m{U}, \g{\Lambda}) &=&
%-h \nabla_u H({\bf X}_{ki}, {\bf U}_{ki}, {\g \Lambda}_{ki}),
%\quad 1\leq i\leq N , \\[.1in]
%\C{T}_{4}(\m{X}, \m{U}, \g{\Lambda}) &=&
%\nabla C(\m{X}_{KN}) - \g{\Lambda}_{K+1,0},
%\end{eqnarray*}
%
where $1 \le k \le K$.
The initial state is 
$\m{X}_{0N} = \m{X}_{10} = \m{a}$.
The components of $\C{F}$ are given by
\[
\C{F}_1 = \C{F}_2 = \C{F}_3 = \C{F}_4 = \C{F}_5 = \m{0}, \quad
\mbox{and }\C{F}_{6ki}(\m{U}) = N_\C{U}(\m{U}_{ki}).
\]

The proof of Theorem~\ref{maintheorem} is based on
\cite[Proposition 3.1]{DontchevHagerVeliov00}, given below in
a slightly simplified form.
Other results like this are contained in
Theorem~3.1 of \cite{DontchevHager97},
in Proposition~5.1 of \cite{Hager99c}, in Theorem~2.1 of \cite{Hager02b}, and
in Theorem~1 of \cite{Hager90}.
\smallskip
\begin{proposition}\label{prop}
Let ${\cal X}$ be a Banach space and
let ${\cal Y}$ be a linear normed space with the norms in both
spaces denoted $\| \cdot \|$.
Let ${\cal F} : {\cal X} \mapsto 2^{\cal Y}$
%let ${\cal L}: {\cal X} \mapsto {\cal Y}$ be a bounded linear operator,
and let ${\cal T}: {\cal X} \mapsto {\cal Y}$ with $\cal T$
continuously Fr\'{e}chet differentiable
in $B_r(\g{\theta}^*)$, the ball with center $\g{\theta}^*$ and radius $r$,
for some $\g{\theta}^* \in {\cal X}$ and $r > 0$.
Suppose that the following conditions hold for some
$\g{\delta} \in {\cal Y}$ and scalars $\epsilon$ and $\gamma > 0$:
\begin{itemize}
\item[{\rm (C1)}]
${\cal T}(\g{\theta}^*) + \g{\delta} \in {\cal F}(\g{\theta}^*)$.
\item[{\rm (C2)}]
$\|\nabla {\cal T}(\g{\theta}) - \nabla \C{T}(\g{\theta}^*)\| \le \epsilon$
for all $\g{\theta} \in B_r(\g{\theta}^*)$.
\item[{\rm (C3)}]
The map $({\cal F} - \nabla \C{T}(\g{\theta}^*))^{-1}$ is single-valued
and Lipschitz continuous with Lipschitz constant $\gamma$.
\end{itemize}
If $\epsilon \gamma < 1$ and
%$\epsilon r \le \sigma$,
%$\|\g{\delta}\| \le \sigma$, and
$\|\g{\delta}\| \le (1 - \gamma \epsilon )r/\gamma$,
then there exists a unique $\g{\theta} \in B_r(\g{\theta}^*)$ such that
${\cal T}(\g{\theta}) \in {\cal F}(\g{\theta})$.
Moreover, we have the estimate
\begin{equation} \label{abs}
\|\g{\theta} - \g{\theta}^*\| \leq
\frac{\gamma}{1 - \gamma \epsilon} \|\g{\delta} \| .
\end{equation}
\end{proposition}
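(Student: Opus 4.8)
Proposition~\ref{prop} is a standard fixed-point/contraction result, and the plan is to recast the inclusion $\C{T}(\g{\theta}) \in \C{F}(\g{\theta})$ as a fixed point of an auxiliary map and apply Banach's contraction principle. First I would write $\g{A} = \nabla \C{T}(\g{\theta}^*)$ and rewrite the inclusion $\C{T}(\g{\theta}) \in \C{F}(\g{\theta})$ in the equivalent form
\[
\g{A}\g{\theta} - \C{T}(\g{\theta}) + \g{A}\g{\theta} \in (\C{F} - \g{A})^{-1}\;\text{applied with argument}\;\g{A}\g{\theta};
\]
more precisely, since by (C3) the map $(\C{F}-\g{A})^{-1}$ is a single-valued Lipschitz function, call it $\g{\Sigma}$, the inclusion $\C{T}(\g{\theta}) \in \C{F}(\g{\theta})$ is equivalent to $\C{T}(\g{\theta}) - \g{A}\g{\theta} \in (\C{F}-\g{A})(\g{\theta})$, i.e. to $\g{\theta} = \g{\Sigma}(\g{A}\g{\theta} - \C{T}(\g{\theta}))$. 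Thus define $\Phi(\g{\theta}) := \g{\Sigma}(\g{A}\g{\theta} - \C{T}(\g{\theta}))$; a fixed point of $\Phi$ in $B_r(\g{\theta}^*)$ is exactly a solution of the inclusion, and conversely.

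Next I would estimate $\|\Phi(\g{\theta}_1) - \Phi(\g{\theta}_2)\|$ for $\g{\theta}_1, \g{\theta}_2 \in B_r(\g{\theta}^*)$. Using the Lipschitz constant $\gamma$ of $\g{\Sigma}$ from (C3),
\[
\|\Phi(\g{\theta}_1) - \Phi(\g{\theta}_2)\| \le \gamma \left\| \left(\g{A}\g{\theta}_1 - \C{T}(\g{\theta}_1)\right) - \left(\g{A}\g{\theta}_2 - \C{T}(\g{\theta}_2)\right) \right\| = \gamma \left\| \g{A}(\g{\theta}_1 - \g{\theta}_2) - \left(\C{T}(\g{\theta}_1) - \C{T}(\g{\theta}_2)\right)\right\|.
\]
Writing $\C{T}(\g{\theta}_1) - \C{T}(\g{\theta}_2) = \int_0^1 \nabla\C{T}(\g{\theta}_2 + s(\g{\theta}_1-\g{\theta}_2))(\g{\theta}_1-\g{\theta}_2)\,ds$ (valid since the segment stays in the convex set $B_r(\g{\theta}^*)$ where $\C{T}$ is continuously Fr\'echet differentiable), the bracket becomes $\int_0^1 \left(\g{A} - \nabla\C{T}(\g{\theta}_2 + s(\g{\theta}_1-\g{\theta}_2))\right)(\g{\theta}_1-\g{\theta}_2)\,ds$, whose norm is at most $\epsilon\|\g{\theta}_1 - \g{\theta}_2\|$ by (C2). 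Hence $\|\Phi(\g{\theta}_1)-\Phi(\g{\theta}_2)\| \le \gamma\epsilon\|\g{\theta}_1-\g{\theta}_2\|$, so $\Phi$ is a contraction on $B_r(\g{\theta}^*)$ with factor $\gamma\epsilon < 1$.

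Then I would check that $\Phi$ maps $B_r(\g{\theta}^*)$ into itself, which is where (C1) and the hypothesis $\|\g{\delta}\| \le (1-\gamma\epsilon)r/\gamma$ enter. Condition (C1) says $\C{T}(\g{\theta}^*) + \g{\delta} \in \C{F}(\g{\theta}^*)$, i.e. $\C{T}(\g{\theta}^*) + \g{\delta} - \g{A}\g{\theta}^* \in (\C{F}-\g{A})(\g{\theta}^*)$, which means $\g{\Sigma}(\g{A}\g{\theta}^* - \C{T}(\g{\theta}^*) - \g{\delta}) = \g{\theta}^*$; equivalently $\Phi(\g{\theta}^*) = \g{\Sigma}(\g{A}\g{\theta}^* - \C{T}(\g{\theta}^*))$ differs from $\g{\theta}^*$ by applying $\g{\Sigma}$ to arguments that differ by $\g{\delta}$, so $\|\Phi(\g{\theta}^*) - \g{\theta}^*\| \le \gamma\|\g{\delta}\|$. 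For a general $\g{\theta} \in B_r(\g{\theta}^*)$ we then combine with the contraction estimate: $\|\Phi(\g{\theta}) - \g{\theta}^*\| \le \|\Phi(\g{\theta}) - \Phi(\g{\theta}^*)\| + \|\Phi(\g{\theta}^*) - \g{\theta}^*\| \le \gamma\epsilon\|\g{\theta} - \g{\theta}^*\| + \gamma\|\g{\delta}\| \le \gamma\epsilon r + \gamma\|\g{\delta}\| \le r$, the last step using the bound on $\|\g{\delta}\|$. Thus $\Phi : B_r(\g{\theta}^*) \to B_r(\g{\theta}^*)$ is a contraction on a complete metric space (a closed ball in a Banach space), so by the Banach fixed point theorem it has a unique fixed point $\g{\theta}$, giving the claimed existence and uniqueness of a solution to $\C{T}(\g{\theta}) \in \C{F}(\g{\theta})$ in $B_r(\g{\theta}^*)$. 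Finally, for the estimate \eqref{abs}, apply the fixed-point identity and the two bounds once more: $\|\g{\theta} - \g{\theta}^*\| = \|\Phi(\g{\theta}) - \g{\theta}^*\| \le \gamma\epsilon\|\g{\theta}-\g{\theta}^*\| + \gamma\|\g{\delta}\|$, and solving for $\|\g{\theta}-\g{\theta}^*\|$ yields $\|\g{\theta}-\g{\theta}^*\| \le \frac{\gamma}{1-\gamma\epsilon}\|\g{\delta}\|$.

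The only subtle point — not really an obstacle but the place to be careful — is the bookkeeping that translates the set-valued inclusion into a genuine fixed-point equation for a single-valued map: one must verify that $\C{T}(\g{\theta}) \in \C{F}(\g{\theta})$, $\C{T}(\g{\theta}) - \g{A}\g{\theta} \in (\C{F} - \g{A})(\g{\theta})$, and $\g{\theta} = \g{\Sigma}(\g{A}\g{\theta} - \C{T}(\g{\theta}))$ are all genuinely equivalent, which requires that $\g{\Sigma} = (\C{F}-\g{A})^{-1}$ be single-valued (so the equation $\g{\theta} = \g{\Sigma}(y)$ is the same as $y \in (\C{F}-\g{A})(\g{\theta})$) — this is precisely guaranteed by (C3). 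Everything else is the routine self-map-plus-contraction verification.
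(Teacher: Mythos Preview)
The paper does not actually prove Proposition~\ref{prop}; it simply quotes it from \cite[Proposition~3.1]{DontchevHagerVeliov00}, so there is no ``paper's own proof'' to compare against. Your contraction-mapping argument is exactly the standard proof of this kind of abstract implicit-function result and is correct in outline and in all the substantive estimates.

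There is one sign slip worth fixing: from $\C{T}(\g{\theta}) - \g{A}\g{\theta} \in (\C{F}-\g{A})(\g{\theta})$ one gets $\g{\theta} = \g{\Sigma}\bigl(\C{T}(\g{\theta}) - \g{A}\g{\theta}\bigr)$, not $\g{\Sigma}\bigl(\g{A}\g{\theta} - \C{T}(\g{\theta})\bigr)$; with the latter a fixed point would satisfy $2\g{A}\g{\theta} - \C{T}(\g{\theta}) \in \C{F}(\g{\theta})$, which is not the target inclusion. The same sign flip appears in your (C1) verification. Once you correct $\Phi(\g{\theta}) := \g{\Sigma}\bigl(\C{T}(\g{\theta}) - \g{A}\g{\theta}\bigr)$ and, correspondingly, $\g{\theta}^* = \g{\Sigma}\bigl(\C{T}(\g{\theta}^*) + \g{\delta} - \g{A}\g{\theta}^*\bigr)$, every subsequent estimate goes through verbatim, since the norms of the differences are unaffected by the sign.
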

\smallskip
\begin{proof}
Define
$\Phi(\g{\theta}) =$
$[\C{F} - \nabla\C{T}(\g{\theta}^*)]^{-1}
[\C{T}-\nabla\C{T}(\g{\theta}^*)](\g{\theta})$.
For all $\g{\theta}_1$ and $\g{\theta}_2 \in B_r(\g{\theta}^*)$,
a Taylor expansion with integral remainder term yields
\[
[\C{T} - \nabla\C{T}(\g{\theta}^*)](\g{\theta}_2) =
[\C{T} - \nabla\C{T}(\g{\theta}^*)](\g{\theta}_1) +
\int_0^1 [\nabla \C{T} (\g{\theta}_1 +
s(\g{\theta}_2-\g{\theta}_1)) - \nabla\C{T}(\g{\theta}^*)]
\; ds \; (\g{\theta}_2 - \g{\theta}_1).
\]
By (C2), it follows that
\begin{equation}\label{L1}
\|[\C{T} - \nabla\C{T}(\g{\theta}^*)])(\g{\theta}_2) -
[\C{T} - \nabla\C{T}(\g{\theta}^*)](\g{\theta}_1)\| \le
\epsilon \|\g{\theta}_2 - \g{\theta}_1\|.
\end{equation}
By (C3) and (\ref{L1}), we have
\begin{eqnarray*}
&& \|\Phi(\g{\theta}_1) - \Phi(\g{\theta}_2)\| \\
&=& \|[\C{F} - \nabla\C{T}(\g{\theta}^*)]^{-1}
[\C{T} - \nabla\C{T}(\g{\theta}^*)](\g{\theta}_1)
- [\C{F} - \nabla\C{T}(\g{\theta}^*)]^{-1}
[\C{T} - \nabla\C{T}(\g{\theta}^*)](\g{\theta}_2)\| \\
&\le& \gamma \| [\C{T} - \nabla\C{T}(\g{\theta}^*)]
(\g{\theta}_1) - [\C{T} - \nabla\C{T}(\g{\theta}^*)](\g{\theta}_2)\| \\
&\le& \epsilon \gamma \|\g{\theta}_1 - \g{\theta}_2\| .
\end{eqnarray*}
Since $\epsilon \gamma < 1$, $\Phi$ is a contraction on $B_r (\g{\theta}^*)$.
Subtracting $\nabla\C{T}(\g{\theta}^*)(\g{\theta}^*)$ from each side of (C1)
gives
\[
[\C{T}-\nabla\C{T}(\g{\theta}^*)]\g{\theta}^* + \g{\delta}
\in [\C{F} - \nabla\C{T}(\g{\theta}^*)](\g{\theta}^*),
\]
and utilizing the uniqueness in (C3) yields
\[
\g{\theta}^* =
[\C{F} - \nabla\C{T}(\g{\theta}^*)]^{-1}
[(\C{T}-\nabla\C{T}(\g{\theta}^*))\g{\theta}^* + \g{\delta}].
\]
With this substitution, it follows from (\ref{L1}), (C3), and (C2) that
\begin{eqnarray}
&& \|\Phi (\g{\theta}) - \g{\theta}^*\|\nonumber\\
&=&
\|[\C{F} - \nabla\C{T}(\g{\theta}^*)]^{-1}
[\C{T} - \nabla\C{T}(\g{\theta}^*)](\g{\theta})
- [\C{F} - \nabla\C{T}(\g{\theta}^*)]^{-1}[(\C{T} -
\nabla\C{T}(\g{\theta}^*))(\g{\theta}^*) + \g{\delta}]\| \nonumber\\
&\le& \gamma
\|[\C{T} - \nabla\C{T}(\g{\theta}^*)](\g{\theta})
- [\C{T} - \nabla\C{T}(\g{\theta}^*)](\g{\theta}^*) - \g{\delta}]\| \nonumber\\
&\le& \gamma (\epsilon \|\g{\theta} - \g{\theta}^*\| + \|\g{\delta}\|)
\le \gamma (\epsilon r + \|\g{\delta}\|) \label{abs1}
\end{eqnarray}
for all $\g{\theta} \in B_r(\g{\theta}^*)$.
The assumption that
$\|\g{\delta}\| \le (1 - \gamma \epsilon )r/\gamma$ can be rearranged to
obtain $\gamma (\epsilon r + \|\g{\delta}\|) \le r$, which implies that
$\|\Phi (\g{\theta}) - \g{\theta}^*\| \le r$ by (\ref{abs1}).
Since $\Phi$ maps $B_r(\g{\theta}^*)$ into itself and $\Phi$ is a contraction
on $B_r(\g{\theta}^*)$, the contraction mapping principle yields the
existence of a unique fixed point $\g{\theta} \in B_r(\g{\theta}^*)$.
Since $\|\Phi (\g{\theta}) - \g{\theta}^*\| = \|\g{\theta} - \g{\theta}^*\|$
for this fixed point, (\ref{abs}) is a consequence of (\ref{abs1}).
\end{proof}
\smallskip

We use Proposition~\ref{prop} with
$\g{\theta}^* = (\m{X}^*, \m{U}^*, \g{\Lambda}^*)$ defined in
(\ref{optdiscrete}) and
$\g{\theta} = (\m{X}^N, \m{U}^N, \g{\Lambda}^N)$.
%The key steps in the analysis are the estimation of the residual
%$\left\|\mathcal{T}\left(\g{\theta}^*\right)\right\|$,
%the proof that
%$\nabla\mathcal{T}(\g{\theta}^*)$ is invertible,
%and the derivation of a bound for $\|\nabla\mathcal{T}(\g{\theta}^*)^{-1}\|$.
The norm on $\C{X}$ is given by
\begin{equation}\label{Xnorm}
\|\g \theta\|=\|({\bf X},{\bf U}, {\g \Lambda})\|_\infty
=\max\{\|\bf X\|_\infty, \|\bf U\|_\infty,\|\g \Lambda\|_\infty\}.
\end{equation}
%
%For this norm, the left side of (\ref{maineq}) and the left side of (\ref{abs})
%are the same.
The space $\C{Y}$ corresponds to the codomain of $\C{T}$.
If $\m{y} \in \C{Y}$, then we let $\m{y}_l$ denote the part of $\m{y}$
associated with $\C{T}_l$, $1 \le l \le 6$.
%Thus $\m{y}_1$ has $KN$ components in $\mathbb{R}^n$,
%$\m{y}_2$ has $KN$ components in $\mathbb{R}^n$,
%$\m{y}_3$ has $KN$ components in $\mathbb{R}^m$,
%and $\m{y}_4$ has one component in $\mathbb{R}^n$.
The norm of $\m{y} \in \C{Y}$ is given by
\[
\|\m{y}\|_{\C{Y}} = \|\m{y}_1\|_\omega + |\m{y}_2|
+ \|\m{y}_3\|_\omega + |\m{y}_4| + h^{1/2}|\m{y}_5| +
h^{-1/2}\|\m{y}_6\|_\infty ,
\]
where for $\m{z} \in \mathbb{R}^{KNn}$, the $\omega$-norm is defined by
\[
\|\m{z}\|_\omega = \left( \sum_{k = 1}^K
\sum_{i = 1}^N \omega_i |\m{z}_{ki}|^2
\right)^{1/2}, \quad \m{z}_{ki} \in \mathbb{R}^n.
\]
For $\m{z} \in \mathbb{R}^{Nn}$, the $\omega$-norm is
\[
\|\m{z}\|_\omega = \left(
\sum_{i = 1}^N \omega_i |\m{z}_{i}|^2
\right)^{1/2}, \quad \m{z}_{i} \in \mathbb{R}^n.
\]
%

%The dimension of $\C{X}$ is $(2KN+1)n + KNm$ since both $\m{X}$ and
%$\g{\Lambda}$ have $N$ components on each of the K intervals
%(their value at the $N$ collocation points) and there is the
%additional component $\g{\Lambda}_{K+1,0}$ corresponding to the
%terminal costate, which may not equal $\g{\Lambda}_{KN}$.
%The control has $N$ components on each interval corresponding
%to the $N$ collocation points.
%Similarly, the dimension of $\C{Y}$ is $[KN + KN + 1]n + KNm =$
%$(2KN+1)n + KNm$, the same dimension as that of $\C{X}$.
%
%--------------------------------------------------------------------------
\section{Interpolation error in $\C{H}^1$}
\label{sect_interp}
%--------------------------------------------------------------------------

Our estimate for the distance from 
$\C{T}(\m{X}^*, \m{U}^*, \g{\Lambda}^*)$ to $\C{F}(\m{U}^*)$
utilizes the following bound for the 
$\C{H}^1(\Omega)$ error of the interpolant based on
the point set $\tau_i$, $0 \le i \le N$.
\smallskip
\begin{lemma}\label{interp}
If $u \in \C{H}^\eta(\Omega)$ for some $\eta \ge 2$,
then there exists a constant $c$,
independent of $N$ and $\eta$, such that
\begin{equation}\label{interperror}
|u - u^I|_{\C{H}^1(\Omega)} \le
(c/N)^{p - 1} |u|_{\C{H}^p(\Omega )},
\quad p = \min \{ \eta, N+1 \},
\end{equation}
where $u^I \in \C{P}_N$ is the interpolant of $u$
satisfying $u^I(\tau_i) = u(\tau_i)$, $0 \le i \le N$, and $N > 0$.
In the case $\eta = p = 1$,
there exists a constant $c$, independent of $N$, such that
\begin{equation}\label{interperror2}
|u - u^I|_{\C{H}^1(\Omega)} \le
c |u|_{\C{H}^1(\Omega )},
\end{equation}
\end{lemma}
\smallskip

\begin{proof}
Throughout the analysis, $c$ denotes a generic constant whose value is
independent of $N$ and $\eta$, and which may have different values in
different equations.
We first show that if the lemma holds for all
$u \in \C{H}_0^1(\Omega) \cap \C{H}^\eta(\Omega)$,
then it holds for all $u \in \C{H}^\eta(\Omega)$.
Suppose $u \in \C{H}^\eta(\Omega)$ and let
$\ell$ denote the linear function for which $\ell(\pm 1) = u(\pm 1)$.
Since $\ell^I = \ell$, it follows that
\[
|u - u^I|_{\C{H}^1(\Omega)} = |(u -\ell) - (u - \ell)^I|_{\C{H}^1(\Omega)} .
\]
Since $u -\ell \in \C{H}_0^1(\Omega)$, (\ref{interperror}) gives
\[
|u - u^I|_{\C{H}^1(\Omega)} \le
(c/N)^{p - 1} |u - \ell|_{\C{H}^p(\Omega )}
\]
when $\eta \ge 2$.
Moreover, when $\eta \ge 2$,
$|u - \ell|_{\C{H}^p(\Omega )} = |u|_{\C{H}^p(\Omega )}$
since derivatives of order two or larger applied to the linear function
$\ell$ are zero.
This establishes (\ref{interperror}) for all $u \in \C{H}^\eta(\Omega)$ with
$\eta \ge 2$.
If $\eta = 1$, then by (\ref{interperror2}), we have
\begin{equation}\label{interperror3}
|u - u^I|_{\C{H}^1(\Omega)} \le
c |u - \ell|_{\C{H}^1(\Omega )} \le c\left( |u|_{\C{H}^1(\Omega)} + 
(|\ell|_{\C{H}^1(\Omega)} \right) .
\end{equation}
Since $\dot{\ell}= (u(1) - u (-1))/2$, the Schwarz inequality gives
\begin{equation}\label{interperror4}
|\ell|_{\C{H}^1(\Omega)} = \frac{|u(1) - u(-1)|}{\sqrt{2}} = 
\frac{1}{\sqrt{2}} \left| \int_{-1}^1 \dot{u}(\tau) \; d\tau \right| \le
|u|_{\C{H}^1(\Omega)} .
\end{equation}
Combine (\ref{interperror3}) and (\ref{interperror4}) to obtain
(\ref{interperror2}) for all $u \in \C{H}^1(\Omega)$.
Henceforth, it is assumed that
$u \in \C{H}_0^1(\Omega) \cap \C{H}^\eta(\Omega)$.

Let $\pi_N u$ denote the projection of $u$
into $\C{P}_N^0$ relative to the norm $| \cdot |_{\C{H}^1(\Omega)}$.
Define $E_N = u - \pi_N u$ and
$e_N = E_N^I = (u - \pi_N u)^I = u^I - \pi_N u$.
Since $E_N - e_N = u - u^I$, it follows that
\begin{equation}\label{part0}
| u - u^I|_{\C{H}^1(\Omega)} \le
|E_N|_{\C{H}^1(\Omega)} + |e_N|_{\C{H}^1(\Omega)} .
\end{equation}
In \cite[Prop.~3.1]{Elschner93} it is shown that for $\eta \ge 1$,
\begin{equation}\label{part1}
|E_N|_{\C{H}^1(\Omega)} \le (c/N)^{p-1} |u|_{\C{H}^p(\Omega)},
\quad \mbox{where } p = \min\{\eta, N+1\}.
\end{equation}
We will establish the bound
\begin{equation}\label{part2}
|e_N|_{\C{H}^1(\Omega)}  \le c |E_N|_{\C{H}^1(\Omega)}.
\end{equation}
Combine (\ref{part0})--(\ref{part2}) to obtain (\ref{interperror})
and (\ref{interperror2}) for an appropriate choice of $c$.
%note that p = 1 or 2 or 3 ... which is used to show that all the constants
%can be replaced by a single constant.

%Now let us establish (\ref{part2}).
By \cite[Lem.~4.4]{BernardiMaday92} and the fact that
$e_N \in \C{P}_N^0$, it follows that
\begin{equation}\label{i1}
|e_N|_{\C{H}^1(\Omega)} \le cN \left( \int_\Omega
\frac{e_N^2(\tau)}{1-\tau^2} \;d\tau \right)^{1/2} .
\end{equation}
Since $e_N \in \C{P}_N^0$ and
$e_N^2(\tau)/(1-\tau^2) \in \C{P}_{2N-2}^0$,
$N$-point Radau quadrature is exact, and we have
\begin{equation}\label{i2}
\left( \int_\Omega \frac{e_N^2(\tau)}{1-\tau^2} \;d\tau
\right)^{1/2} =
\left(\sum_{i=1}^{N-1}
\frac{\omega_i e_N^2(\tau_i)}{1-\tau_i^2} \right)^{1/2} =
\left(\sum_{i=1}^{N-1}
\frac{\omega_i E_N^2(\tau_i)}{1-\tau_i^2} \right)^{1/2} .
\end{equation}
The last equality holds since $e_N = E_N$ at
$\tau_i$, $0 \le i \le N$.
Although Lemma~4.3 in \cite{BernardiMaday92} was given for Lobatto
quadrature, exactly the same proof can be used for both Gauss and
Radau quadrature.
Consequently, since $E_N \in \C{H}_0^1(\Omega)$, it follows from
\cite[Lem.~4.3]{BernardiMaday92} that
\begin{equation}\label{quad}
\left(\sum_{i=1}^N
\frac{\omega_i E_N^2(\tau_i)}{1-\tau_i^2} \right)^{1/2} \le
%\left( \int_{\Omega} \frac{e_N^2(\tau)}{1-\tau^2} \; d\tau \right) \le
c\left[ \left( \int_\Omega \frac{E_N^2(\tau)}{1-\tau^2} \; d\tau\right)^{1/2}
+ N^{-1} |E_N|_{\C{H}^1(\Omega)} \right] .
\end{equation}
By \cite[Prop.~9.1]{HagerMohapatraRao16b}, we have
\begin{equation}\label{A}
N \left[ \left( \int_\Omega \frac{E_N^2(\tau)}{1-\tau^2} \;
d\tau\right)^{1/2}
+ N^{-1} |E_N|_{\C{H}^1(\Omega)} \right] \le 2|E_N|_{\C{H}^1(\Omega)}.
\end{equation}
Combine (\ref{i1}--\ref{A}) to obtain (\ref{part2}).
\end{proof}

\begin{remark}
In the analogue of Lemma~$\ref{interp}$ for the Gauss quadrature points
given in $\cite[Lem.~4.1]{HagerMohapatraRao16b}$, the exponent in the
error bound is $p - 1.5$ instead of $p - 1$.
The difference in the exponent is due to the treatment of endpoints.
In the Radau result, the polynomial interpolates
at both $\tau = -1$ and $\tau = 1$,
while in the Gauss result, the polynomial interpolates only at $\tau = -1$.
\end{remark}

%-----------------------------------------------------------------------------
\section{Analysis of the residual}
\label{residual}
%-----------------------------------------------------------------------------
The distance from
$\C{T}(\m{X}^*, \m{U}^*, \g{\Lambda}^*)$ to $\C{F}(\m{U}^*)$ is now
estimated.
\smallskip

\begin{lemma}\label{residuallemma}
If $\m{x}^*$ and $\g{\lambda}^* \in \C{PH}^\eta (\Omega_0)$
for some $\eta \ge 2$,
then there exists a constant $c$, independent of $N$, $h$, and $\eta$, such that
\begin{eqnarray}
&{\rm dist}[\C{T}(\m{X}^*, \m{U}^*, \g{\Lambda}^*), \C{F}(\m{U}^*)]_{\C{Y}}&
\nonumber \\[.1in]
&\le
h^{p-1/2}\left(\frac{c}{N}\right)^{p-1}
|{\mathbf{x}^{*}}|_{\C{PH}^p(\Omega_0)}
+ h^{q-1/2}\left(\frac{c}{N}\right)^{q-1.5}
|{\lmb^{*}}|_{\C{PH}^q(\Omega_0)},& \label{delta}
\end{eqnarray}
where $p=\min(\eta, N+1)$ and $q=\min(\eta, N)$.
\end{lemma}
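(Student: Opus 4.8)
The plan is to bound each of the six components $\C{T}_l(\m{X}^*,\m{U}^*,\g{\Lambda}^*)$, $1\le l\le 6$, separately in the appropriate norm that appears in the definition of $\|\cdot\|_{\C{Y}}$, and then observe that the distance to $\C{F}(\m{U}^*)$ is controlled by these, using $\C{F}_1=\dots=\C{F}_5=\m{0}$ and the fact that $-\nabla_u H(\m{x}^*(t),\m{u}^*(t),\g{\lambda}^*(t))\in N_{\C{U}}(\m{u}^*(t))$ pointwise (so the $\C{T}_6$ component is already in the target set). The first observation is that $\C{T}_2$ and $\C{T}_5$ vanish identically at the optimal point: $\C{T}_{2k}=\m{X}^*_{k0}-\m{X}^*_{k-1,N}=\m{x}^*_k(-1)-\m{x}^*_{k-1}(1)=\m{0}$ by continuity of $\m{x}^*$, and $\C{T}_5=\nabla C(\m{X}^*_{KN})-\g{\Lambda}^*_{K+1,0}=\m{0}$ by the definition $\g{\lambda}^*_{K+1}(-1):=\nabla C(\m{x}^*(1))$ and the terminal costate condition. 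So these two contribute nothing.

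The substantive estimates are for $\C{T}_1$, $\C{T}_3$, $\C{T}_4$, and $\C{T}_6$. For $\C{T}_1$: on mesh interval $k$, let $\m{x}_k^{*I}\in\C{P}_N^n$ be the interpolant of $\m{x}_k^*$ at $\tau_0,\dots,\tau_N$. Since $\m{x}_k^*$ satisfies the exact dynamics $\dot{\m{x}}_k^*(\tau_i)=h\m{f}(\m{x}_k^*(\tau_i),\m{u}_k^*(\tau_i))$ and $\m{X}^*_{kj}=\m{x}_k^*(\tau_j)=\m{x}_k^{*I}(\tau_j)$, the differentiation-matrix identity gives $\sum_j D_{ij}\m{X}^*_{kj}=\dot{\m{x}}_k^{*I}(\tau_i)$, so $\C{T}_{1ki}=\dot{\m{x}}_k^{*I}(\tau_i)-\dot{\m{x}}_k^*(\tau_i)$, i.e. the derivative of the interpolation error evaluated at the collocation points. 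The $\omega$-norm of this quantity is then bounded, via property (P2) type reasoning or directly, by $|\m{x}_k^{*I}-\m{x}_k^*|_{\C{H}^1}$ up to a constant; summing over $k$ and invoking Lemma \ref{interp} (after rescaling from $(t_{k-1},t_k)$ to $\Omega$, which introduces the factor $h^{p-1/2}$ from the chain rule on the $p$-th derivative and the $\sqrt{h}$ from the change of variables in the integral) yields the first term on the right of (\ref{delta}). The estimates for $\C{T}_3$ and $\C{T}_4$ are parallel but now involve the costate polynomial $\g{\lambda}_k^*$ interpolated in $\C{P}_{N-1}^n$ at $\tau_1,\dots,\tau_N$ only; here one uses the identities (\ref{h282})/(\ref{h-2})/(\ref{h-1}) relating $\m{D}^\ddagger$ to differentiation, together with the exact costate equation (\ref{costate}) rescaled to each mesh interval, and then a version of Lemma \ref{interp} for interpolation at the $N$ Radau points only. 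Because the costate interpolant sits in $\C{P}_{N-1}$ and interpolates at only one endpoint ($\tau=1$, not $\tau=-1$), the exponent degrades to $q-1.5$ exactly as in the Remark following Lemma \ref{interp}, producing the second term of (\ref{delta}). For $\C{T}_6$: although $-h\nabla_u H(\m{x}_k^*(\tau_i),\m{u}_k^*(\tau_i),\g{\lambda}_k^*(\tau_i))\in N_{\C{U}}(\m{U}^*_{ki})$ already, so the distance contribution is zero for this component — but note the $h^{-1/2}$ weight in $\|\cdot\|_{\C{Y}}$ is chosen precisely so that nothing blows up; I would simply remark that $\C{T}_6(\m{X}^*,\m{U}^*,\g{\Lambda}^*)\in\C{F}_6(\m{U}^*)$ by (\ref{controlmin}).

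The main obstacle I anticipate is the careful bookkeeping of the $h$-powers under the rescaling $t=t_{k-1/2}+h\tau$: each derivative brought in or out of the dynamics contributes a factor $h$, the $\C{H}^1$ seminorm on $\Omega$ versus the integral over $(t_{k-1},t_k)$ contributes $h^{1/2}$, and the $p$-th derivative seminorm $|\m{x}^*_k|_{\C{H}^p(\Omega)}$ relates to $h^{p-1/2}|\m{x}^*|_{\C{H}^p(t_{k-1},t_k)}$; getting the final exponents $h^{p-1/2}$ and $h^{q-1/2}$ to come out right, and summing $(\sum_k |\m{x}^*|^2_{\C{H}^p(t_{k-1},t_k)})^{1/2}=|\m{x}^*|_{\C{PH}^p(\Omega_0)}$ correctly, is where the care is needed. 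A secondary subtlety is $\C{T}_4$: one must recognize (as in the derivation of (\ref{nc0}) in Proposition \ref{equiv}) that the relation $\C{T}_{4k}=\m{0}$ for the continuous solution is a consequence of the other costate equations plus exactness of Radau quadrature on $\dot{\g{\lambda}}_k^*\in\C{P}_{N-2}^n$, so its residual at the optimal point is again expressible through the interpolation error of $\g{\lambda}_k^*$ and is absorbed into the same second term. Once all components are bounded, taking the max (or rather the weighted sum defining $\|\cdot\|_{\C{Y}}$) and collecting constants gives (\ref{delta}).
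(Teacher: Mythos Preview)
Your overall decomposition is correct: $\C{T}_2^*$ and $\C{T}_5^*$ vanish, $\C{T}_6^* \in \C{F}_6(\m{U}^*)$ by the minimum principle, and the real work sits in $\C{T}_1^*$, $\C{T}_3^*$, $\C{T}_4^*$. The $h$-bookkeeping you outline (in particular $|\m{x}_k^*|_{\C{H}^p(\Omega)} = h^{p-1/2}|\m{x}^*|_{\C{H}^p(t_{k-1},t_k)}$) is right. But there is a genuine gap in the step where you pass from the discrete $\omega$-norm of $\C{T}_{1ki}^* = \dot{\m{x}}_k^{*I}(\tau_i) - \dot{\m{x}}_k^*(\tau_i)$ to the $\C{H}^1$ seminorm of the interpolation error. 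You say this follows ``via property (P2) type reasoning or directly,'' but neither works: (P2) concerns rows of $[\m{W}^{1/2}\m{D}_{1:N}]^{-1}$ and says nothing about point values of a non-polynomial function, and the quadrature sum $\sum_i \omega_i |g(\tau_i)|^2$ is not in general controlled by $\|g\|_{\C{L}^2(\Omega)}^2$ when $g$ fails to be a polynomial of degree at most $N-1$. Here $g = \dot{\m{x}}_k^{*I} - \dot{\m{x}}_k^*$, and $\dot{\m{x}}_k^*$ is not a polynomial.

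The paper closes this gap with a device you have not mentioned: it introduces a \emph{second} interpolant $(\dot{\m{x}}_k^*)^J \in \C{P}_{N-1}^n$ of $\dot{\m{x}}_k^*$ at the $N$ Radau points. Since $(\dot{\m{x}}_k^*)^J(\tau_i) = \dot{\m{x}}_k^*(\tau_i)$, one may rewrite $\C{T}_{1ki}^* = \dot{\m{x}}_k^{*I}(\tau_i) - (\dot{\m{x}}_k^*)^J(\tau_i)$; now both terms lie in $\C{P}_{N-1}^n$, their squared difference lies in $\C{P}_{2N-2}$, Radau quadrature is exact, and the $\omega$-norm becomes an honest $\C{L}^2$ norm. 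A triangle inequality then splits off $\|\dot{\m{x}}_k^{*I} - \dot{\m{x}}_k^*\|_{\C{L}^2}$ (bounded by Lemma~\ref{interp}) and $\|\dot{\m{x}}_k^* - (\dot{\m{x}}_k^*)^J\|_{\C{L}^2}$ (bounded by a standard $\C{L}^2$ estimate for Radau interpolation, e.g.\ \cite[(5.4.33)]{Canuto06}). The same trick is needed for $\C{T}_3^*$, where the analogous Radau-only derivative bound \cite[(5.4.34)]{Canuto06} produces the $q-1.5$ exponent. Relatedly, your remark that ``$\dot{\g{\lambda}}_k^*\in\C{P}_{N-2}^n$'' is a slip: $\g{\lambda}_k^*$ is the exact continuous costate, not a polynomial, so $\C{T}_{4k}^*$ does \emph{not} vanish. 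In the paper it is rewritten, via the second-interpolant idea, as $\int_{-1}^1 [(\dot{\g{\lambda}}_k^*)^J - \dot{\g{\lambda}}_k^*]\,d\tau$ and then estimated by Schwarz plus the Radau interpolation bound.
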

\smallskip
\begin{proof}
Since $\C{T}(\m{X}^*,\m{U}^*,\g{\Lambda}^*)$ appears throughout the
analysis, it is abbreviated $\C{T}^*$.
Since the minimum principle (\ref{controlmin}) holds for all $t \in \Omega_0$,
it holds at the collocation points, which implies that
$\C{T}_6^* \in \C{F}_6(\m{U}^*)$.
Also, $\C{T}_2^* = \C{T}_5^* = \m{0}$ since the optimal state is continuous and
it satisfies the terminal condition (\ref{costate}) in the costate equation.
Thus we only need to analyze $\C{T}_1^*$, $\C{T}_3^*$, and $\C{T}_4^*$.

Let us first consider $\mathcal{T}_1^*$. 
Since ${\bf D}$ is a differentiation matrix associated with the
collocation points, we have
\begin{equation}\label{t1}
\sum_{j=0}^{N}D_{ij}
{\bf X}_{kj}^*=\dot{\bf x}_k^I(\tau_i), \quad 1\leq i \leq N,
\end{equation}
where ${\bf x}_k^I \in \mathcal{P}_N^n$ is the (interpolating) polynomial
that passes through ${\bf x}_k^*(\tau_j)$ for $0 \leq j \leq N$.
Since ${\bf x}^*$ satisfies the dynamics of (\ref{P}),
\begin{equation}\label{t1dotx}
h{\bf f}({\bf X}_{ki}^*, {\bf U}_{ki}^*)=\dot{\bf x}_k^*(\tau_i).
\end{equation}
Combine \eqref{t1} and \eqref{t1dotx} to obtain
\begin{equation}\label{t1dif}
\mathcal{T}_{1ki}^* = 
\dot{\bf x}_k^I(\tau_i)-\dot{\bf x}_k^*(\tau_i) =
\dot{\bf x}_k^I(\tau_i)-(\dot{\bf x}_k^*)^J(\tau_i),
\end{equation}
where $(\dot{\m{x}}_k^*)^J \in \C{P}_{N-1}^n$
is the interpolant that passes through
$\dot{\bf x}_k^*(\tau_i)$ for $1 \le i \le N$.
Since both $\dot{\bf x}^I$ and $(\dot{\m{x}}^*)^J$ are polynomials of
degree $N-1$ and Radau quadrature is exact for polynomials of degree
$2N -2$, it follows that
\begin{eqnarray}
\|\C{T}_1^*\|_\omega^2 &=&
\sum_{k=1}^K \sum_{i=1}^N
\omega_i | \dot{\bf x}_k^I(\tau_i)-(\dot{\bf x}_k^*)^J(\tau_i)|^2 \nonumber \\
&=&
\sum_{k=1}^K \int_{-1}^1
| \dot{\bf x}_k^I(\tau)-(\dot{\bf x}_k^*)^J(\tau)|^2 \; d \tau \nonumber \\
&\le&
2 \sum_{k=1}^K \int_{-1}^1
\left( | \dot{\bf x}_k^I(\tau)-\dot{\bf x}_k^*(\tau)|^2 +
| \dot{\bf x}_k^*(\tau)-(\dot{\bf x}_k^*)^J(\tau)|^2 \right) \; d \tau .
\label{int}
\end{eqnarray}
By Lemma~\ref{interp}, we have
\begin{equation}\label{xk-bound}
\| \dot{\bf x}_k^I-\dot{\bf x}_k^* \|_{\C{L}^2(\Omega)} \le
(c/N)^{p-1} |\m{x}^*_k|_{\C{H}^p(\Omega)}, \quad p = \min\{\eta, N+1\}.
\end{equation}
The second term in (\ref{int}) involves the difference between
between $\dot{\m{x}}_k^* \in \C{H}^{(\eta-1)}$
and its interpolant $(\dot{\bf x}_k^*)^J \in \C{P}_{N-1}^n$
at the $N$ Radau points.
By the bound given in \cite[(5.4.33)]{Canuto06} for the $\C{L}^2$ error in
Radau interpolation, this term has exactly the same bound as that
on the right side of (\ref{xk-bound}).
Since $\m{x}_k (\tau) = \m{x}(t_{k-1/2} + h\tau)$, the derivatives
contained in the right side of (\ref{xk-bound}) satisfy
\[
\frac{d^{p}\m{x}_k^*(\tau)}{d\tau^{p}} =h^{p}
\left. \frac{d^{p}\m{x}^*(t)}{dt^{p}} \right|_{t = t_{k-1/2} + h\tau}.
\]
Consequently, after a change of variables, we have
\[
\int_{-1}^1 \left| \frac{d^p\m{x}_k^*(\tau)}{d\tau^p} \right|^2 \; d\tau =
h^{2p-1} \int_{t_{k-1}}^{t_k}
\left| \frac{d^p\m{x}^*(t)}{dt^p} \right|^2 \; dt .
\]
Combine this with (\ref{int}) and (\ref{xk-bound}) to deduce that
$\|\C{T}^*_1\|_\omega$ is bounded by the first term on the right side side
of (\ref{delta}).

The analysis of $\C{T}_3^*$ is similar to the analysis of $\C{T}_1^*$.
Let $\g{\lambda}_k^I \in \C{P}_{N-1}^n$ be the polynomial
that interpolates $\g{\lambda}_k^*(\tau_j)$ for $1 \le j \le N$.
By (\ref{h-2}) and (\ref{h-1}), we have
\begin{eqnarray}
\sum_{j=1}^{N} D_{ij}^\ddagger \g{\Lambda}_{kj}^* &=&
\dot{\g{\lambda}}_k^I(\tau_i), \quad 1 \le i < N, \label{h83a} \\
\sum_{j=1}^{N} D_{Nj}^\ddagger \g{\Lambda}_{kj}^* &=&
\dot{\g{\lambda}}_k^I(\tau_i) - \g{\lambda}_k^* (1)/\omega_N. \label{h83b}
\end{eqnarray}
Since $\g{\lambda}^*$ satisfies (\ref{costate}), it follows that
\begin{equation} \label{h84}
h\nabla_x H({\bf X}_{ki}^*,{\bf U}_{ki}^*, {\g \Lambda}_{ki}^*) =
h\nabla_x H({\bf x}_k^*(\tau_i),{\bf u}_k^*(\tau_i), {\g \lambda}_k^*(\tau_i))
= -\dot{\g{\lambda}}_k^*(\tau_i) ,
\end{equation}
$1 \le i \le N$.
We substitute (\ref{h83a})--(\ref{h84}) in the definition of $\C{T}_3$ to obtain
\begin{equation}
\C{T}_{3ki}(\m{X}^*,  \m{U}^*, \g{\Lambda}^*) =
\dot{\g{\lambda}}_k^I(\tau_i) - \dot{\g{\lambda}}_k^*(\tau_i) =
\dot{\g{\lambda}}_k^I(\tau_i) - (\dot{\g{\lambda}}_k^*)^J(\tau_i) ,
\quad 1 \le i \le N,\nonumber
\end{equation}
where $(\dot{\g{\lambda}}_k^*)^J \in \C{P}_{N-1}^n$ is the polynomial that
passes through $\dot{\g{\lambda}}_k^*(\tau_i)$, $1 \le i \le N$.
Note that the term $-\g{\lambda}_k^* (1)/\omega_N$ in (\ref{h83b})
cancels the corresponding term in $\C{T}_{3k}$ due to the
continuity of $\g{\lambda}^*$.
Since $\dot{\g{\lambda}}_k^I \in \C{P}_{N-2}^n$ and
$(\dot{\g{\lambda}}_k^*)^J \in \C{P}_{N-1}^n$, and since
Radau quadrature is exact for polynomials of degree $2N-2$,
we obtain, as in (\ref{int}),
\begin{equation}\label{T4bound}
\|\C{T}_3^*\|_\omega^2 \le
2 \sum_{k=1}^K \int_{-1}^1
\left( | \dot{\g{\lambda}}_k^I(\tau)-\dot{\g{\lambda}}_k^*(\tau)|^2 +
| \dot{\g{\lambda}}_k^*(\tau)-(\dot{\g{\lambda}}_k^*)^J(\tau)|^2 \right)
\; d \tau .
\end{equation}
The last term in (\ref{T4bound}) has the bound
\begin{equation}\label{lambdak-final}
\| (\dot{\g{\lambda}}^*_k)^J-\dot{\g{\lambda}}_k^* \|_{\C{L}^2(\Omega)} \le
h^p (c/N)^{p-1} |\g{\lambda}^*|_{\C{H}^p(t_{k-1}, t_k)},
\quad p = \min\{\eta, N+1\},
\end{equation}
corresponding to the $\C{L}^2$ error in interpolation at the Radau points.
The other term, however, is different from the state
since $\g{\lambda}_k^I$ has degree $N-1$ while the state
$\m{x}_k^I$ has degree $N$, and the state interpolates at both
the quadrature points and at $\tau = -1$, while $\g{\lambda}_k^I$
only interpolates at the quadrature points.
The error in the derivative of the interpolant at the Radau
points has the bound \cite[(5.4.34)]{Canuto06}
\begin{equation}\label{lambdak-final2}
\| \dot{\g{\lambda}}_k^I-\dot{\g{\lambda}}_k^* \|_{\C{L}^2(\Omega)} \le
h^q (c/N)^{q-1.5} |\g{\lambda}^*|_{\C{H}^q(\Omega)},
\quad q = \min\{\eta, N\} .
\end{equation}
The exponent changes from $p-1$ in (\ref{lambdak-final}) to $q-1.5$ due to the
fact that $\g{\lambda}_k^I$ does not interpolate at $\tau = -1$,
and $q\le p$ since the polynomial associated with
$\g{\lambda}_k^I$ has degree $N-1$.
Note that if $\g{\lambda}^* \in \C{PH}^\eta(\Omega_0)$,
then $\g{\lambda}^* \in \C{PH}^{(\eta-1)}(\Omega_0)$,
so we can always ensure that the error bound (\ref{lambdak-final2}) dominates
the error bound (\ref{lambdak-final}) by lowering $\eta$ in
(\ref{lambdak-final}) if necessary.
Utilizing the bound (\ref{lambdak-final2}) in (\ref{T4bound}) and changing
variables from $\tau$ to $t$, we deduce that $\|\C{T}_3\|_\omega$ is
bounded by the second term on the right side of (\ref{delta}).

Finally, let us consider $\C{T}_4^*$.
Applying (\ref{h84}) and utilizing the continuity of $\g{\lambda}^*$ and
the exactness of Radau quadrature, we have
\begin{eqnarray*}
\C{T}_{4k}^* &=&
{\g \lambda}_k^*(-1)-{\g \lambda}_{k+1}^*(-1)
+\sum_{i=1}^{N}\omega_i \dot{\g{\lambda}}_k^* (\tau_i) \\
&=&
{\g \lambda}_k^*(-1)-{\g \lambda}_{k}^*(1)
+\sum_{i=1}^{N}\omega_i (\dot{\g{\lambda}}_k^*)^J (\tau_i) \\
&=&
{\g \lambda}_k^*(-1)-{\g \lambda}_{k}^*(1) +
\int_{-1}^1 (\dot{\g{\lambda}}_k^*)^J (\tau) \; d\tau
= \int_{-1}^1 [(\dot{\g{\lambda}}_k^*)^J (\tau)
- \dot{\g{\lambda}}_k^*(\tau)] \; d\tau .
\end{eqnarray*}
By (\ref{lambdak-final}) and the Schwarz inequality, we have
\[
|\C{T}_{4k}^*| \le \sqrt{2}
\|(\dot{\g{\lambda}}_k^*)^J - \dot{\g{\lambda}}_k^* \|_{\C{L}^2(\Omega)} \le
h^p (c/N)^{p-1} |\g{\lambda}^*|_{\C{H}^p(\Omega)},
\quad p = \min\{\eta, N+1\}.
\]
As in the analysis of $\C{T}_3$,
we square this, sum over $k$, change variables from $\tau$ to $t$,
and take the square root to obtain a bound
that can be dominated by the last term in (\ref{delta}).
This completes the proof.
\end{proof}
%

%------------------------------------------------------------------
\section{Invertibility of linearized dynamics}
\label{inverse}
%------------------------------------------------------------------
The inclusion
\[
\C{T}(\m{X}, \m{U}, \g{\Lambda}) \in \C{F}(\m{U}),
\]
corresponding to the first-order optimality conditions for the discrete
problem (\ref{D}), will be linearized around $(\m{X}^*, \m{U}^*,\g{\Lambda}^*)$.
%we introduce the linearized inclusion and establish
%the invertibility of the linearized dynamics for both the state and costate.
Given $\m{Y} \in \C{Y}$, the linearized problem is to find
$(\m{X}, \m{U}, \g{\Lambda})$ such that
\begin{equation}\label{linearproblem}
(\nabla \C{T}^*) [\m{X}, \m{U}, \g{\Lambda}] +
\m{Y} \in \C{F}(\m{U}),
\end{equation}
where $\nabla \C{T}^*$ denotes $\nabla \C{T}(\m{X}^*, \m{U}^*, \g{\Lambda}^*)$,
the derivative of $\C{T}$ evaluated at $(\m{X}^*, \m{U}^*, \g{\Lambda}^*)$.
%Here $\nabla \C{T}(\m{X}^*, \m{U}^*, \g{\Lambda}^*)[\m{X}, \m{U}, \g{\Lambda}]$
%denotes the derivative of $\C{T}$ evaluated at
%$(\m{X}^*, \m{U}^*, \g{\Lambda}^*)$ operating on $[\m{X}, \m{U}, \g{\Lambda}]$.
Since $\g{\Lambda}$ enters $\C{T}$ in an affine manner, the linearization
with respect to $\g{\Lambda}$ is trivial.
On the other hand,
the discrete state $\m{X}$ and the discrete control $\m{U}$
generally enter $\C{T}$ in a nonlinear fashion.
The derivative of $\C{T}$ in (\ref{T1})--(\ref{T6})
is built from the following matrices for $1 \le k \le K$:
\[
\begin{array}{ll}
{\bf A}_{ki}=\nabla_x{\bf f}({\bf x}^*(t_{ki}),{\bf u}^*(t_{ki})),
&{\bf B}_{ki}=\nabla_u{\bf f}({\bf x}^*(t_{ki}),{\bf u}^*(t_{ki})),\\
{\bf Q}_{ki}=\nabla_{xx}^2H\left({\bf x}^*(t_{ki}),{\bf u}^*(t_{ki}),
{\g \lambda}^*(t_{ki})\right),
&{\bf S}_{ki}=\nabla_{xu}^2H\left({\bf x}^*(t_{ki}),{\bf u}^*(t_{ki}),
{\g \lambda}^*(t_{ki})\right),\\
{\bf R}_{ki}=\nabla_{uu}^2H\left({\bf x}^*(t_{ki}),{\bf u}^*(t_{ki}),
{\g \lambda}^*(t_{ki})\right), &{\bf T}=\nabla^2C({\bf x}^*(1)).
\end{array}
\]
As pointed out in (\ref{optdiscrete}), the optimal variables
$(\m{x}^*, \m{u}^*, \g{\lambda}^*)$ evaluated at the $t_{ki}$
are equivalent to the transformed optimal variables
$(\m{x}_k^*, \m{u}_k^*, \g{\lambda}_k^*)$ evaluated at the $\tau_i$.
The elements of $\nabla \C{T}^*[\m{X}, \m{U}, \g{\Lambda}]$ are the
following:
\begin{eqnarray}
\nabla \C{T}_{1ki}^*[\m{X}, \m{U}, \g{\Lambda}] &=&
\left( \sum_{j=0}^{N}{D}_{ij}{\bf X}_{kj} \right)
- h(\m{A}_{ki} \m{X}_{ki} + \m{B}_{ki} \m{U}_{ki}),
\quad 1\leq i\leq N ,\nonumber \\
\nabla \C{T}_{2k}^*[\m{X}, \m{U}, \g{\Lambda}] &=&
{\bf X}_{k0} - \m{X}_{k-1, N}, \quad \mbox{where }
\m{X}_{0N} = \m{0},
\nonumber \\
\nabla \C{T}_{3ki}^*[\m{X}, \m{U}, \g{\Lambda}] &=&
\left( \sum_{j=1}^{N}{D}_{ij}^\ddagger{\g \Lambda}_{kj} \right) +
h (\m{A}_{ki}\tr \g{\Lambda}_{ki}  + \m{Q}_{ki} \m{X}_{ki}
+ \m{S}_{ki} \m{U}_{ki}),
\quad 1 \leq i < N , \nonumber \\
\nabla \C{T}_{3kN}^*[\m{X}, \m{U}, \g{\Lambda}] &=&
\left( \sum_{j=1}^{N}{D}_{Nj}^\ddagger{\g \Lambda}_{kj} \right) +
h (\m{A}_{kN}\tr \g{\Lambda}_{kN}  + \m{Q}_{kN} \m{X}_{kN}
+ \m{S}_{kN} \m{U}_{kN}) \nonumber \\
&& \quad \quad \quad \quad \quad \quad \quad \;
+ \g{\Lambda}_{k+1,0}/\omega_N , \\[.05in]
\nabla \C{T}_{4k}^*[\m{X}, \m{U}, \g{\Lambda}] &=&
\g{\Lambda}_{k0} - \g{\Lambda}_{k+1,0} - h \sum_{i=1}^N \omega_i
(\m{A}_{ki}\tr \g{\Lambda}_{ki}  +
\m{Q}_{ki} \m{X}_{ki} + \m{S}_{ki} \m{U}_{ki}),
\nonumber \\
\nabla \C{T}_{5k}^*[\m{X}, \m{U}, \g{\Lambda}] &=&
\m{TX}_{KN} - \g{\Lambda}_{K+1,0} , \nonumber \\[.1in]
\nabla \C{T}_{6ki}^*[\m{X}, \m{U}, \g{\Lambda}] &=&
-h(\m{B}_{ki}\tr \g{\Lambda}_{ki} + \m{S}_{ki}\tr\m{X}_{ki} +
\m{R}_{ki} \m{U}_{ki}), \quad 1\leq i\leq N , \nonumber
\end{eqnarray}
where $1 \le k \le K$.

The following result establishes invertibility of the linearized state dynamics.
\smallskip

\begin{lemma}
\label{feasiblestate}
If {\rm (A2)} holds,
then for each $\m{q}\in \mathbb{R}^{Kn}$ and
$\m{p} \in \mathbb{R}^{KNn}$
with $\m{q}_k$ and $\m{p}_{ki} \in \mathbb{R}^n$,
the linear system
\begin{eqnarray}
\sum_{j=0}^{N}{D}_{ij}{\bf X}_{kj} &=&
h\m{A}_{ki} \m{X}_{ki} + \m{p}_{ki},
\quad 1\leq i\leq N , \label{h99} \\
{\bf X}_{k0} &= & \m{X}_{k-1, N} + \m{q}_{k},
\quad \m{X}_{0N} = \m{0},
\label{h100}
\end{eqnarray}
$1 \le k \le K$,
has a unique solution $\m{X} \in \mathbb{R}^{K(N+1)n}$.
This solution has the bound
\begin{equation} \label{xjbound}
\sup_{\substack{1 \le k \le K \\[.01in]
1 \le j \le N}} \|\m{X}_{kj}\|_\infty \le
h^{-1/2} \left( \frac{\sqrt{2} \|\m{p}\|_\omega + |\m{q}|}
{(1-2hd_1)^K} \right) .
\end{equation}
\end{lemma}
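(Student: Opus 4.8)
The plan is to solve the system one mesh interval at a time, using the differentiation-matrix identity $\m{D}\m{1}=\m{0}$ to absorb the carry-over value into the right-hand side, and then to chain the per-interval bounds by a discrete Gr\"onwall argument. Fix $k$ and regard $\m{X}_{k0}$ as given; write $\m{X}_k\in\mathbb{R}^{Nn}$ for the block vector with components $\m{X}_{ki}$, $1\le i\le N$. Separating the $j=0$ term, (\ref{h99}) becomes $\m{M}_k\m{X}_k=\m{p}_k-(\m{D}_0\otimes\m{I}_n)\m{X}_{k0}$, where $\m{M}_k:=(\m{D}_{1:N}\otimes\m{I}_n)-h\C{A}_k$ and $\C{A}_k$ is block diagonal with diagonal blocks $\m{A}_{k1},\ldots,\m{A}_{kN}$. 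Since $\m{D}\m{1}=\m{0}$ gives $\m{D}_0=-\m{D}_{1:N}\m{1}$, the right-hand side equals $\m{p}_k+(\m{D}_{1:N}\otimes\m{I}_n)(\m{1}\otimes\m{X}_{k0})$. Factor $\m{M}_k=(\m{D}_{1:N}\otimes\m{I}_n)(\m{I}-\m{N}_k)$ with $\m{N}_k:=h(\m{D}_{1:N}^{-1}\otimes\m{I}_n)\C{A}_k$; by (P1) $\|\m{D}_{1:N}^{-1}\otimes\m{I}_n\|_\infty=2$, and $\|\C{A}_k\|_\infty\le d_1$ since each block is an evaluation of $\nabla_x\m{f}$ along the optimal trajectory, so (A2) gives $\|\m{N}_k\|_\infty\le 2hd_1<1$. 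Hence $\m{I}-\m{N}_k$, and therefore $\m{M}_k$, is invertible and $\m{X}_k$ is uniquely determined by $\m{X}_{k0}$. Starting from $\m{X}_{10}=\m{X}_{0N}+\m{q}_1=\m{q}_1$ and using $\m{X}_{k0}=\m{X}_{k-1,N}+\m{q}_k$, induction on $k$ yields the unique solution of the full system in $\mathbb{R}^{K(N+1)n}$.

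For the bound, solve for $\m{X}_k$ explicitly, using $\m{M}_k^{-1}(\m{D}_{1:N}\otimes\m{I}_n)=(\m{I}-\m{N}_k)^{-1}$:
\[
\m{X}_k=(\m{I}-\m{N}_k)^{-1}(\m{D}_{1:N}^{-1}\otimes\m{I}_n)\m{p}_k+(\m{I}-\m{N}_k)^{-1}(\m{1}\otimes\m{X}_{k0}).
\]
The key inequality, and the reason the $\omega$-norm appears in the norm on $\C{Y}$, is $\|(\m{D}_{1:N}^{-1}\otimes\m{I}_n)\m{p}_k\|_\infty\le\sqrt{2}\,\|\m{p}_k\|_\omega$: writing $(\m{D}_{1:N}^{-1})_{ij}=([\m{W}^{1/2}\m{D}_{1:N}]^{-1})_{ij}\,\omega_j^{1/2}$ and applying the Schwarz inequality to the sum over $j$, the factor $\sqrt{2}$ is exactly the row-norm bound in (P2). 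Together with $\|(\m{I}-\m{N}_k)^{-1}\|_\infty\le(1-2hd_1)^{-1}$ and $\|\m{1}\otimes\m{X}_{k0}\|_\infty=\|\m{X}_{k0}\|_\infty$, this gives, with $\gamma:=1-2hd_1$,
\[
\alpha_k:=\max_{1\le i\le N}\|\m{X}_{ki}\|_\infty\le\gamma^{-1}\big(\sqrt{2}\,\|\m{p}_k\|_\omega+\|\m{X}_{k0}\|_\infty\big).
\]
The $\m{X}_{k0}$ term is not amplified by $\|\m{D}_{1:N}^{-1}\|_\infty=2$, precisely because $\m{D}_0=-\m{D}_{1:N}\m{1}$ (the homogeneous, $h=0$, solution with data $\m{X}_{k0}$ is the constant $\m{X}_{k0}$).

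Finally, put $b_k:=\sqrt{2}\,\|\m{p}_k\|_\omega+\|\m{q}_k\|_\infty$ and $\alpha_0:=0$. Since $\|\m{X}_{k0}\|_\infty\le\|\m{X}_{k-1,N}\|_\infty+\|\m{q}_k\|_\infty\le\alpha_{k-1}+\|\m{q}_k\|_\infty$, the per-interval estimate gives $\alpha_k\le\gamma^{-1}(b_k+\alpha_{k-1})$; unrolling and using $\gamma^{-1}\ge1$ yields $\alpha_k\le\gamma^{-k}\sum_{j=1}^k b_j\le\gamma^{-K}\sum_{j=1}^K b_j$. A Schwarz inequality in $k$, $\sum_{j=1}^K b_j\le K^{1/2}\big(\sqrt{2}\,\|\m{p}\|_\omega+|\m{q}|\big)$, together with $K=1/(2h)$ so that $K^{1/2}=(2h)^{-1/2}\le h^{-1/2}$, gives (\ref{xjbound}). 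The main obstacle is the per-interval estimate --- transferring the $\omega$-norm bound on $\m{p}_k$ to a sup-norm bound on $\m{X}_k$ --- which is the step that forces (P2); the Kronecker-product reduction, the Neumann-series bounds, and the Gr\"onwall recursion are routine.
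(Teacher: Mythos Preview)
Your proof is correct and follows essentially the same approach as the paper's: solve interval by interval, use (P1) and (A2) via a Neumann-series factorization for invertibility, use (P2) to convert the $\omega$-norm of $\m{p}_k$ to a sup-norm bound, and chain the per-interval estimates by a discrete Gr\"onwall recursion followed by Schwarz in $k$. The only difference is that the paper outsources the per-interval estimate (your $\alpha_k\le\gamma^{-1}(\sqrt{2}\,\|\m{p}_k\|_\omega+\|\m{X}_{k0}\|_\infty)$) to \cite[Lem.~5.1]{HagerMohapatraRao16b}, whereas you supply that argument in full; your explicit use of $\m{D}\m{1}=\m{0}$ to see that the carry-over term $\m{X}_{k0}$ is not amplified by $\|\m{D}_{1:N}^{-1}\|_\infty$ is exactly what underlies that cited lemma.
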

\smallskip

\begin{remark}
Recall that $d_1$ is defined in $(\ref{d1d2})$.
Since the denominator expression $(1-2hd_1)^K = (1-d_1/K)^K$
in the bound $(\ref{xjbound})$ approaches $e^{-d_1}$ as $K$ tends to infinity,
the denominator is bounded away from zero, uniformly in $K$.
Hence, $(\ref{xjbound})$ also implies a uniform bound, independent of $K$.
\end{remark}
\smallskip

\begin{proof}
We first show that for given $\m{X}_{k0}$, the linear system
(\ref{h99}) uniquely determines $\m{X}_{k1}$ through $\m{X}_{kN}$.
Since $\m{X}_{0N} = \m{0}$, it follows from (\ref{h100}) that
$\m{X}_{10} = \m{q}_1$ is known.
Consequently, for $k = 1$ up to $k = K$, we can use (\ref{h99}) to compute
$\m{X}_{k1}$ through $\m{X}_{kN}$, and then (\ref{h100}) to
evaluate $\m{X}_{k+1,0}$.
This shows that (\ref{h99})--(\ref{h100}) has a unique solution that
can be computed by a recursive process.

Let $\bar{\m{X}}_k$ be the vector obtained by vertically stacking
$\m{X}_{k1}$ through $\m{X}_{kN}$,
let ${\m{A}}_k$ be the block diagonal matrix
with $i$-th diagonal block $\m{A}_{ki}$, $1 \le i \le N$,
define $\bar{\m{D}} = {\bf D}_{1:N}\otimes {\bf I}_n$ where
$\otimes$ is the Kronecker product, and let $\m{D}_0$ denote the
first column of $\m{D}$.
With this notation, (\ref{h99})--(\ref{h100}) reduce to
\begin{equation}\label{statedynamics}
(\bar{\m{D}} - h{\m{A}}_k) \bar{\m{X}}_k = \m{p} - (\m{D}_0\otimes\m{I}_n)
\m{X}_{k0} = \m{p} - (\m{D}_0\otimes\m{I}_n) (\m{X}_{k-1,N} + \m{q}_k) .
\end{equation}
By (P1), ${\bf D}_{1:N}$ is invertible and
$\|{\bf D}_{1:N}^{-1}\|_\infty = 2$.
Hence, $\|\bar{\m{D}}^{-1}\| =$
$\|{\bf D}_{1:N}^{-1}\otimes {\bf I}_n\| = 2$, and
by (A2), we have $2h\|{\m{A}}_k\|_\infty \le 2hd_1 < 1$, which implies that
\[
h\|\bar{\m{D}}^{-1}{\m{A}}_k\|_\infty \le
h\|\bar{\m{D}}^{-1}\|_\infty \|{\m{A}}_k\|_\infty \le
2 hd_1 < 1 .
\]
By \cite[p. 351]{HJ12},
${\bf I}-h\bar{\bf D}^{-1}{\bf A}_k$ is invertible and
\begin{equation}\label{Xbound}
\|({\bf I}-h\bar{\bf D}^{-1}{\bf A})^{-1}\|_\infty \leq 1/(1-2hd_1).
\end{equation}
Multiply (\ref{statedynamics}) first by
$\bar{\bf D}^{-1}$ and then by
$(\m{I} - h\bar{\m{D}}^{-1}{\m{A}}_k)^{-1}$ to obtain
\[
\bar{\m{X}}_k =
({\bf I}-h\bar{\bf D}^{-1}{\bf A}_k)^{-1}
\left( \bar{\m{D}}^{-1}\m{p}_k +
\bar{\m{D}}^{-1} (\m{D}_0\otimes \m{I}_n) (\m{X}_{k-1,N} + \m{q}_k) \right) .
\]
It is shown in \cite[Lem.~5.1]{HagerMohapatraRao16b} that
$(\bar{\m{D}})^{-1} [\m{D}_{0} \otimes \m{I}_n] =$ $-\m{1}\otimes \m{I}_n$.
Consequently,
\[
\bar{\m{X}}_k =
({\bf I}-h\bar{\bf D}^{-1}{\bf A}_k)^{-1}
\left( \bar{\m{D}}^{-1}\m{p}_k - \m{1}\otimes
(\m{X}_{k-1,N} + \m{q}_k) \right) .
\]
Take norms and apply (\ref{Xbound}) to get
\begin{equation}\label{h98}
\|\bar{\m{X}}\|_\infty \le
\left( \frac{1}{1-2hd_1} \right)
\left( \|\bar{\m{D}}^{-1}\m{p}_k\|_\infty +
\|\m{X}_{k-1,N}\|_\infty + \|\m{q}_k\|_\infty \right) .
\end{equation}
In \cite[Lem.~5.1]{HagerMohapatraRao16b} it is shown that by (P2), we have
\[
\|\bar{\m{D}}^{-1}\m{p}_k\|_\infty  \le \sqrt{2}
\|\m{p}_k\|_\omega, \quad
\|\m{p}_k\|_\omega = \left( \sum_{i = 1}^N \omega_i |\m{p}_{ki}|^2 \right) .
\]
Insert this bound in (\ref{h98}) and utilize the trivial inequality
$\|\m{q}_k\|_\infty \le |\m{q}_k|$ to obtain
\begin{equation}\label{kterm}
\|\bar{\m{X}}_k\|_\infty \le \left( \frac{1}{1-2hd_1} \right)
\left(\|{\m{X}}_{k-1,N}\|_\infty +
\sqrt{2}\|\m{p}_k\|_\omega + |\m{q}_k| \right).
\end{equation}
%
%Since $\|\m{p}_k\|_\infty \le \|\m{p}\|_\infty$,
%$\|\m{q}_k\|_\infty \le \|\m{q}\|_{\m{sup}}$,
%By (A2), $2hd_1 < 1$.
Since $\m{X}_{kN} = \m{0}$ for $k = 0$ and
$\|\m{X}_{k,N}\|_\infty \le \|\bar{\m{X}}_{k}\|_\infty$ for $k > 0$,
(\ref{kterm}) yields
\begin{equation}\label{xk}
\|\bar{\m{X}}_k\|_\infty \le
\sum_{j=1}^k \frac{\sqrt{2}\|\m{p}_j\|_\omega +
|\m{q}_j|}{(1-2hd_1)^{k-j+1}}
\end{equation}
for $1 \le k \le K$.
The upper bound (\ref{xjbound}) is obtained by
replacing $1/(1-2hd_1)^{k-j+1}$ by
its maximum $1/(1-2hd_1)^{K}$ and by
utilizing the Schwarz inequality as in
\begin{equation}\label{pandq}
\sum_{j=1}^k \|\m{p}_j\|_\omega \le \sqrt{k} \|\m{p}\|_\omega \le
h^{-1/2} \|\m{p}\|_\omega
\quad
\mbox{and} \quad
\sum_{j=1}^k |\m{q}_j| \le \sqrt{k} |\m{q}| \le h^{-1/2} |\m{q}|.
\end{equation}
\end{proof}
\smallskip

The linearized costate dynamics has an analogous bound.
\smallskip

\begin{lemma}
\label{feasiblecostate}
If {\rm (P4)} and {\rm (A2)} hold,
then for each $\m{q}\in \mathbb{R}^{Kn}$,
$\m{p} \in \mathbb{R}^{KNn}$, and $\g{\Lambda}_{K+1,0} \in \mathbb{R}^n$ with
$\m{q}_k$ and $\m{p}_{ki} \in \mathbb{R}^n$,
the linear system
\begin{eqnarray}
\sum_{j=1}^{N}{D}_{ij}^\ddagger{\g \Lambda}_{kj} &=&
\m{p}_{ki} - h\m{A}_{ki}\tr \g{\Lambda}_{ki},
\quad 1 \leq i < N , \label{cos1} \\
\sum_{j=1}^{N}{D}_{Nj}^\ddagger{\g \Lambda}_{kj} &=&
\m{p}_{kN} - h\m{A}_{kN}\tr \g{\Lambda}_{kN}
- \g{\Lambda}_{k+1,0}/\omega_N , \label{cos2} \\
\g{\Lambda}_{k0} &=& \g{\Lambda}_{k+1,0} + \m{q}_{k}
+ h \sum_{i=1}^N \omega_i \m{A}_{ki}\tr \g{\Lambda}_{ki} , \label{cos3}
\end{eqnarray}
$1 \le k \le K$,
has a unique solution $\g{\Lambda} \in \mathbb{R}^{K(N+1)n}$.
This solution has the bound
\begin{equation} \label{cjbound}
\|\g{\Lambda}\|_\infty \le
\frac{\|\g{\Lambda}_{K+1,0}\|_\infty
+ h^{-1/2} \sqrt{2}
\|\m{p}\|_\omega + \sum_{k=1}^K |\m{q}_k|}{(1-2hd_2)^K} .
\end{equation}
\end{lemma}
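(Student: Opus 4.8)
The plan is to follow the proof of Lemma~\ref{feasiblestate}, with the one change that the costate system propagates information from the terminal data $\g{\Lambda}_{K+1,0}$ toward $k=1$, so the recursion in $k$ must be read \emph{backwards}, from $k=K$ down to $k=1$. First I would establish existence and uniqueness. For fixed $k$, stack $\g{\Lambda}_{k1},\ldots,\g{\Lambda}_{kN}$ into $\bar{\g{\Lambda}}_k\in\mathbb{R}^{Nn}$; then (\ref{cos1})--(\ref{cos2}) read
\[
\left( \m{D}^\ddagger \otimes \m{I}_n + h\,\C{A}_k \right) \bar{\g{\Lambda}}_k =
\m{p}_k - \m{e}_N \otimes \left( \g{\Lambda}_{k+1,0}/\omega_N \right),
\]
where $\C{A}_k$ is the block diagonal matrix with diagonal blocks $\m{A}_{k1}\tr,\ldots,\m{A}_{kN}\tr$ and $\m{e}_N$ is the $N$-th coordinate vector in $\mathbb{R}^N$. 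By (P3), $\m{D}^\ddagger$ is invertible with $\|(\m{D}^\ddagger)^{-1}\|_\infty\le 2$, so the coefficient matrix factors as $(\m{D}^\ddagger\otimes\m{I}_n)(\m{I}+h((\m{D}^\ddagger)^{-1}\otimes\m{I}_n)\C{A}_k)$; since $\|h((\m{D}^\ddagger)^{-1}\otimes\m{I}_n)\C{A}_k\|_\infty\le 2hd_2<1$ by (A2), the second factor is invertible (Neumann series), so $\bar{\g{\Lambda}}_k$ is uniquely determined by $\g{\Lambda}_{k+1,0}$; then (\ref{cos3}) defines $\g{\Lambda}_{k0}$ explicitly, and sweeping backward from the prescribed $\g{\Lambda}_{K+1,0}$ gives the unique $\g{\Lambda}\in\mathbb{R}^{K(N+1)n}$.

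The crux of the estimate is the identity $(\m{D}^\ddagger)^{-1}\m{e}_N = -\omega_N\m{1}$: if $\m{D}^\ddagger\m{p}=\m{e}_N$ then, by (\ref{h282}), $\dot p(\tau_i)=0$ for $1\le i<N$, so the degree-$(N-2)$ polynomial $\dot p$ vanishes and $p$ is constant, while $\dot p(1)-p(1)/\omega_N=1$ forces $p\equiv -\omega_N$ (alternatively, read this off Proposition~\ref{deriv_exact}). Inverting the per-interval system and using $((\m{D}^\ddagger)^{-1}\m{e}_N)\otimes(\g{\Lambda}_{k+1,0}/\omega_N)=-\m{1}\otimes\g{\Lambda}_{k+1,0}$ gives
\[
\bar{\g{\Lambda}}_k = \m{1}\otimes\g{\Lambda}_{k+1,0} +
\left( (\m{D}^\ddagger)^{-1}\otimes\m{I}_n \right)
\left( \m{p}_k - h\,\C{A}_k\bar{\g{\Lambda}}_k \right),
\]
so the mesh-coupling term $\g{\Lambda}_{k+1,0}/\omega_N$ — which, treated as a generic right-hand side, would contribute a harmful factor of order $N$ through $1/\sqrt{\omega_N}$ — in fact contributes only $\g{\Lambda}_{k+1,0}$ to each block. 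This collapse is the one place where the special structure of $\m{D}^\ddagger$, rather than mere invertibility, is essential. Bounding the three terms on the right in the largest-element norm — the first by $\|\g{\Lambda}_{k+1,0}\|_\infty$; the second by $\sqrt2\,\|\m{p}_k\|_\omega$ via $(\m{D}^\ddagger)^{-1}=[\m{W}^{1/2}\m{D}^\ddagger]^{-1}\m{W}^{1/2}$, property (P4), and Cauchy--Schwarz; the third by $2hd_2\|\bar{\g{\Lambda}}_k\|_\infty$ from $\|(\m{D}^\ddagger)^{-1}\|_\infty\le 2$ and $\|\C{A}_k\|_\infty\le d_2$ — and rearranging gives the costate analogue of (\ref{kterm}),
\[
\|\bar{\g{\Lambda}}_k\|_\infty \le
\frac{\|\g{\Lambda}_{k+1,0}\|_\infty + \sqrt2\,\|\m{p}_k\|_\omega}{1-2hd_2} .
\]

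The next step is to bound $\g{\Lambda}_{k0}$ from (\ref{cos3}): using $\sum_{i=1}^N\omega_i=2$, $\|\C{A}_k\|_\infty\le d_2$, and $\|\m{q}_k\|_\infty\le|\m{q}_k|$, one gets $\|\g{\Lambda}_{k0}\|_\infty\le\|\g{\Lambda}_{k+1,0}\|_\infty+|\m{q}_k|+2hd_2\|\bar{\g{\Lambda}}_k\|_\infty$. Setting $\nu_k:=\max\{\|\g{\Lambda}_{k0}\|_\infty,\|\bar{\g{\Lambda}}_k\|_\infty\}$ and $\nu_{K+1}:=\|\g{\Lambda}_{K+1,0}\|_\infty$, the two per-interval bounds combine (using $2hd_2<1$ to bound the coefficient $2\sqrt2 hd_2$ by $\sqrt2$) into $\nu_k\le \nu_{k+1}/(1-2hd_2)+|\m{q}_k|+\sqrt2\,\|\m{p}_k\|_\omega/(1-2hd_2)$ for $1\le k\le K$. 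Unrolling this, every geometric factor that appears is at most $(1-2hd_2)^{-K}$, the $|\m{q}_k|$ accumulate to $\sum_{k=1}^K|\m{q}_k|$, and the $\|\m{p}_k\|_\omega$ are summed by Schwarz as in (\ref{pandq}), $\sum_{k=1}^K\|\m{p}_k\|_\omega\le\sqrt K\,\|\m{p}\|_\omega=(2h)^{-1/2}\|\m{p}\|_\omega$, so that $\sqrt2\sum_k\|\m{p}_k\|_\omega\le h^{-1/2}\|\m{p}\|_\omega\le h^{-1/2}\sqrt2\,\|\m{p}\|_\omega$; since $\|\g{\Lambda}\|_\infty=\max_{1\le k\le K}\nu_k$, this yields (\ref{cjbound}). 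I expect the hard part to be not any individual step but the norm bookkeeping — especially isolating $(\m{D}^\ddagger)^{-1}\m{e}_N=-\omega_N\m{1}$ so the mesh coupling contributes no power of $N$ — and, as with the role of (P1)--(P2) in Lemma~\ref{feasiblestate}, the reliance of the whole argument on properties (P3) and (P4), which are so far only verified numerically.
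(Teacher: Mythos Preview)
Your proposal is correct and follows essentially the same route as the paper's proof: rewrite (\ref{cos1})--(\ref{cos2}) in block form, invoke (P3), (A2), and the key identity $(\m{D}^\ddagger)^{-1}\m{e}_N=-\omega_N\m{1}$ to obtain the per-interval bound, bound $\g{\Lambda}_{k0}$ from (\ref{cos3}), combine into a backward recursion, and close with the Schwarz step (\ref{pandq}). The only cosmetic differences are that the paper inverts $(\m{I}+h\bar{\m{D}}^{\ddag\,-1}\m{A}_k\tr)$ directly rather than framing it as a fixed-point/Neumann bound, and tracks $\|\g{\Lambda}_k\|_\infty$ in place of your $\nu_k$, but the arithmetic and the resulting inequalities are identical.
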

\smallskip

%Note that (\ref{cos1})--(\ref{cos3}) with
%%
%\begin{eqnarray}
%\m{p}_{ki} &=& \m{y}_{4ki} - \m{Q}_{ki} \m{X}_{ki} - \m{S}_{ki} \m{U}_{ki},
%\quad 1 \le i \le N,
%\label{p-def}\\
%\m{q}_k &=& \m{y}_{3k} +
%\sum_{i=1}^N \omega_i (\m{Q}_{ki} \m{X}_{ki} + \m{S}_{ki} \m{U}_{ki}) ,
%\label{q-def}
%\end{eqnarray}
%%
%and $\m{y}_{4kN} = \m{y}_{5k}$, correspond to the respective equations
%%
%\[
%\nabla\C{T}_4^*[\m{X}, \m{U}, \g{\Lambda}] = h\m{y}_4, \quad
%\nabla\C{T}_5^*[\m{X}, \m{U}, \g{\Lambda}] = h\m{y}_5, \quad \mbox{and} \quad
%\nabla\C{T}_3^*[\m{X}, \m{U}, \g{\Lambda}] = h\m{y}_3.
%\]
%\smallskip

\begin{proof}
The proof is similar to the proof of Lemma~\ref{feasiblestate} except
that the recursive solution of (\ref{cos1})--(\ref{cos3})
starts from $k = K$ and descends to $k = 1$.
In particular, we first show that for given $\g{\Lambda}_{k+1,0}$, the
linear system (\ref{cos1})--(\ref{cos2}) uniquely determines
$\g{\Lambda}_{k1}$ through $\g{\Lambda}_{kN}$;
then (\ref{cos3}) can be used to evaluate $\g{\Lambda}_{k0}$.

Define $\bar{\m{D}}^\ddag = {\bf D}^\ddag \otimes {\bf I}_n$,
where $\otimes$ is the Kronecker product.
Equations (\ref{cos1}) and (\ref{cos2}) can be combined into the single
equation
\begin{equation}\label{combined}
\bar{\m{D}}^\ddag \bar{\g{\Lambda}}_k =
\m{p}_{k} - h\m{A}_{k}\tr \bar{\g{\Lambda}}_{k}
- (\m{e}_N\otimes \m{I}_n) 
\g{\Lambda}_{k+1,0}/\omega_N,
\end{equation}
where
$\bar{\g{\Lambda}}_k$ is obtained by vertically stacking
$\g{\Lambda}_{k1}$ through $\g{\Lambda}_{kN}$ and
$\m{e}_N$ is the vector whose $N$ components are all zero except
for the last component which is 1.
By (\ref{h-2}) and (\ref{h-1}), $\m{D}^\ddag \m{1} = -\m{e}_N/\omega_N$,
which implies that
\begin{equation}\label{h999}
\m{D}^{\ddag\; -1} \m{e}_N = -\omega_N \m{1}.
\end{equation}
Hence, we have
\[
\bar{\m{D}}^{\ddag\;-1} (\m{e}_N\otimes \m{I}_n)/\omega_N =
[{\bf D}^{\ddag \; -1}\otimes {\bf I}_n]
(\m{e}_N\otimes \m{I}_n)/\omega_N =
-\m{1}\otimes \m{I}_n .
\]
Multiply (\ref{combined}) by $\bar{\m{D}}^{\ddag\;-1}$ and rearrange to obtain
\begin{equation}\label{rearrange}
(\m{I} + h\bar{\m{D}}^{\ddag\;-1}\m{A}_k\tr)\bar{\g{\Lambda}}_k = 
\bar{\m{D}}^{\ddag\;-1}\m{p}_k + (\m{1} \otimes \m{I}_n)\g{\Lambda}_{k+1,0} .
\end{equation}
As noted in Section~\ref{introduction}, (P4) implies that (P3) holds;
that is, $\|\bar{\bf D}^{\ddag \; -1}\|_\infty \le 2$.
By (A2),
$h\|\bar{\m{D}}^{\ddag\;-1}\m{A}_k\tr \|_\infty \le 2hd_2 < 1$.
Consequently, the matrix
${\bf I}+h\bar{\bf D}^{\ddag \; -1}{\bf A}_k\tr$ is invertible with
\[
\left\|\left({\bf I}+h\bar{\bf D}^{\ddag \; -1}
{\bf A}_k\tr\right)^{-1}\right\|_\infty 
\le \frac{1}{1-2hd_2} .
\]
Multiply (\ref{rearrange}) by
$({\bf I}+h\bar{\bf D}^{\ddag \; -1}{\bf A}_k\tr)^{-1}$ and take
the norm of each side to obtain
\begin{eqnarray}
\|\bar{\g{\Lambda}}_k\|_\infty &\le& \left( \frac{1}{1-2hd} \right)
(\|{\g{\Lambda}}_{k+1,0}\|_\infty +
\|\bar{\m{D}}^{\ddag \; -1}\m{p}_k\|_\infty) \label{pre-cterm} \\
&\le&
\left( \frac{1}{1-2hd} \right)
(\|{\g{\Lambda}}_{k+1,0}\|_\infty +
\|\bar{\m{D}}^{\ddag \; -1}\m{p}_k\|_\infty + \|\m{q}_k\|_\infty )
\label{cterm}
\end{eqnarray}
The norm of (\ref{cos3}) gives
\begin{eqnarray*}
\|\g{\Lambda}_{k0}\|_\infty &\le& \|\g{\Lambda}_{k+1,0}\|_\infty +
\|\m{q}_{k}\|_\infty +
h \sum_{i=1}^N \omega_i \|\m{A}_{ki}\tr\|_\infty
\|\g{\Lambda}_{ki}\|_\infty \\
&\le& \|\g{\Lambda}_{k+1,0}\|_\infty + \|\m{q}_{k}\|_\infty +
2hd_2\|\bar{\g{\Lambda}}_k\|_\infty
\end{eqnarray*}
since the $\omega_i$ sum to 2.
Using the bound for $\|\bar{\g{\Lambda}}_k\|_\infty$ from
(\ref{pre-cterm}) and the fact that $2hd_2 < 1$, we have
\begin{eqnarray}
\|\g{\Lambda}_{k0}\|_\infty &\le& \|\m{q}_k\|_\infty +
\left( \frac{1}{1-2hd_2} \right)
(\|\g{\Lambda}_{k+1,0}\|_\infty +
2hd_2 \|\bar{\m{D}}^{\ddag \; -1}\m{p}_k\|_\infty) \nonumber \\
&\le&
\left( \frac{1}{1-2hd_2} \right)
(\|\g{\Lambda}_{k+1,0}\|_\infty +
2hd_2 \|\bar{\m{D}}^{\ddag \; -1}\m{p}_k\|_\infty +
\|\m{q}_k\|_\infty) \nonumber \\
&\le&
\left( \frac{1}{1-2hd_2} \right)
(\|\g{\Lambda}_{k+1,0}\|_\infty +
\|\bar{\m{D}}^{\ddag \; -1}\m{p}_k\|_\infty +
\|\m{q}_k\|_\infty). \label{0bound}
\end{eqnarray}
Since $\g{\Lambda}_{k,0}$ is contained in $\g{\Lambda}_k$, it follows that
$\|\g{\Lambda}_{k,0}\|_\infty \le$ $\|\g{\Lambda}_{k}\|_\infty$.
Combine (\ref{cterm}) and (\ref{0bound}) to obtain
\[
\|\g{\Lambda}_{k}\|_\infty \le
\left( \frac{1}{1-2hd_2} \right) (\|\g{\Lambda}_{k+1}\|_\infty +
\|\bar{\m{D}}^{\ddag \; -1}\m{p}_k\|_\infty + \|\m{q}_k\|_\infty) ,
\]
where we define $\g{\Lambda}_{K+1,j} = \m{0}$ for $j > 0$ so that
$\|\g{\Lambda}_{K+1}\|_\infty =$ $\|\g{\Lambda}_{K+1,0}\|_\infty$.
This inequality is applied recursively to obtain
\[
\|\g{\Lambda}_{k}\|_\infty \le
\frac{\|\g{\Lambda}_{K+1}\|_\infty}{(1-2hd_2)^{K+1-k}}
+ \sum_{j=k}^K \left( \frac{
\|\bar{\m{D}}^{\ddag \; -1}\m{p}_j\|_\infty + \|\m{q}_j\|_\infty}
{(1-2hd_2)^{j-k+1}} \right) .
\]
To bound the right side, 
the factors $1/(1-2hd_2)^{j-k+1}$ are replaced by
their maximum $1/(1-2hd_2)^{K}$ to obtain
\[
\|\g{\Lambda}_{k}\|_\infty \le
\frac{\|\g{\Lambda}_{K+1}\|_\infty + \sum_{j=k}^K
\left[ \|\bar{\m{D}}^{\ddag \; -1}\m{p}_j\|_\infty + |\m{q}_j| \right] }
{(1-2hd_2)^K} .
\]
By the analysis given in \cite[Lem.~5.1]{HagerMohapatraRao16b},
(P4) implies that
$\|\bar{\m{D}}^{\ddag \; -1}\m{p}_j\|_\infty \le \sqrt{2}\|\m{p}_j\|_\omega$.
Hence, we have
\[
\|\g{\Lambda}_{k}\|_\infty \le
\frac{\|\g{\Lambda}_{K+1}\|_\infty + \sum_{j=k}^K
\left[ \sqrt{2} \|\m{p}_j\|_\omega + |\m{q}_j| \right] }
{(1-2hd_2)^K} .
\]
The first inequality in (\ref{pandq}) completes the proof of (\ref{cjbound}).
\end{proof}
%--------------------------------------------------------------------------
\section{Invertibility of $\C{F} - \nabla \C{T}^*$}
\label{Invert}
%--------------------------------------------------------------------------
Now let us consider the invertibility of $\C{F} - \nabla \C{T}^*$.
%a stronger property than what is needed for Proposition~\ref{abstractProp}.
%Essentially, we show that (C3) holds with $\sigma = \infty$.

\begin{proposition}\label{invertible}
If {\rm (A1)}--{\rm (A2)} and {\rm (P4)} hold, then
for each $\m{Y} \in \C{Y}$, there is a unique solution
$(\m{X}, \m{U}, \g{\Lambda})$ to $(\ref{linearproblem})$.
\end{proposition}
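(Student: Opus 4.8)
The plan is to recognize the linearized inclusion $(\ref{linearproblem})$ as the first-order optimality (KKT) system of a discrete strongly convex linear--quadratic (LQ) control problem, and then to obtain existence and uniqueness of its solution from the strong convexity together with the invertibility of the linearized state and costate dynamics proved in Lemma~\ref{feasiblestate} and Lemma~\ref{feasiblecostate}. Given $\m{Y}\in\C{Y}$, the relevant LQ problem is to minimize, over discrete states $\m{X}$ and controls $\m{U}$ with $\m{U}_{ki}\in\C{U}$, the convex quadratic functional
\[
\C{Q}(\m{X},\m{U}) = \tfrac12\m{X}_{KN}\tr\m{T}\m{X}_{KN}
+\frac{h}{2}\sum_{k=1}^{K}\sum_{i=1}^{N}\omega_i
\begin{bmatrix}\m{X}_{ki}\\ \m{U}_{ki}\end{bmatrix}\tr
\begin{bmatrix}\m{Q}_{ki}&\m{S}_{ki}\\ \m{S}_{ki}\tr&\m{R}_{ki}\end{bmatrix}
\begin{bmatrix}\m{X}_{ki}\\ \m{U}_{ki}\end{bmatrix}
+\ell(\m{X},\m{U}),
\]
subject to the affine constraints $\nabla\C{T}_{1}^*[\m{X},\m{U},\g{\Lambda}]=-\m{y}_{1}$ and $\m{X}_{k0}-\m{X}_{k-1,N}=-\m{y}_{2k}$ (with $\m{X}_{0N}=\m{0}$), where $\ell$ is a linear functional of $(\m{X},\m{U})$ depending affinely on $\m{y}_3,\m{y}_4,\m{y}_5,\m{y}_6$. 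Forming the Lagrangian exactly as in the derivation of Proposition~\ref{equiv}, with $\g{\Lambda}_{ki}$ (rescaled by $\omega_i$, with the sign convention of Proposition~\ref{equiv}) serving as the multiplier for the collocated dynamics at $\tau_i$ and $\g{\Lambda}_{k0}$ as the multiplier for the continuity condition, one checks that the stationarity conditions in the state variables $\m{X}_{k0}$, $\m{X}_{kj}$ $(0<j<N)$, $\m{X}_{kN}$ reproduce, after the summation and $\omega_i$-rescaling used to pass from $(\ref{NC0})$ to $(\ref{nc0})$, the components $\nabla\C{T}_4^*$, $\nabla\C{T}_3^*$, and $\nabla\C{T}_5^*$ of $(\ref{linearproblem})$ together with their inhomogeneous $\m{y}$-terms; the condition obtained by minimizing $\C{Q}$ over $\m{U}_{ki}\in\C{U}$ is the inclusion $\nabla\C{T}_{6ki}^*[\m{X},\m{U},\g{\Lambda}]+\m{y}_{6ki}\in N_{\C{U}}(\m{U}_{ki})$. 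Thus $(\m{X},\m{U},\g{\Lambda})$ solves $(\ref{linearproblem})$ if and only if $(\m{X},\m{U})$ is a KKT point of this LQ problem with $\g{\Lambda}$ its multipliers.

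For existence I would first invoke Lemma~\ref{feasiblestate} (valid under (P1), (P2), (A2)) to solve the equality constraints for $\m{X}$ as the unique affine function $\m{X}=\m{X}(\m{U})$ of the control --- this is the case $\m{p}_{ki}=h\m{B}_{ki}\m{U}_{ki}-\m{y}_{1ki}$, $\m{q}_k=-\m{y}_{2k}$ of $(\ref{h99})$--$(\ref{h100})$. Substituting gives the reduced quadratic $\widehat{\C{Q}}(\m{U})=\C{Q}(\m{X}(\m{U}),\m{U})$ on the nonempty closed convex set $\{\m{U}:\m{U}_{ki}\in\C{U}\}$. It is strongly convex: by (A1) we have $\m{T}\succeq\alpha\m{I}$ and $\bigl[\begin{smallmatrix}\m{Q}_{ki}&\m{S}_{ki}\\ \m{S}_{ki}\tr&\m{R}_{ki}\end{smallmatrix}\bigr]\succeq\alpha\m{I}$ for all $k,i$, so the quadratic part of $\widehat{\C{Q}}$ is bounded below by $\tfrac{h\alpha}{2}\sum_{k,i}\omega_i\bigl(|\m{X}_{ki}|^2+|\m{U}_{ki}|^2\bigr)\ge\tfrac{h\alpha}{2}\|\m{U}\|_\omega^2$, hence $\widehat{\C{Q}}$ is coercive and attains a unique minimizer $\m{U}$ over the feasible set. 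Set $\m{X}=\m{X}(\m{U})$, determine $\g{\Lambda}_{K+1,0}$ from the terminal-costate component $\nabla\C{T}_5^*+\m{y}_5=\m{0}$, and recover $\g{\Lambda}$ as the unique solution --- guaranteed by Lemma~\ref{feasiblecostate} under (P3), (P4), (A2) --- of $(\ref{cos1})$--$(\ref{cos3})$ with data built from $\m{y}_3,\m{y}_4$ and the now-known $\m{X},\m{U}$. Finally, the first-order condition at the minimizer, $-\nabla_{\m{U}_{ki}}\widehat{\C{Q}}(\m{U})\in N_{\C{U}}(\m{U}_{ki})$, is equivalent to $\nabla\C{T}_{6ki}^*[\m{X},\m{U},\g{\Lambda}]+\m{y}_{6ki}\in N_{\C{U}}(\m{U}_{ki})$, because the reduced gradient equals the positive multiple $\omega_i$ of $-(\nabla\C{T}_{6ki}^*+\m{y}_{6ki})$ once the adjoint equations are used to eliminate the sensitivity of $\m{X}$ with respect to $\m{U}$, and $N_{\C{U}}(\m{U}_{ki})$ is a cone. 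This produces a solution of $(\ref{linearproblem})$.

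For uniqueness, any solution of $(\ref{linearproblem})$ is, by the correspondence above, a KKT triple of the same LQ problem; its control component is then a stationary point of the strongly convex functional $\widehat{\C{Q}}$ over the feasible set and therefore equals the unique minimizer $\m{U}$, after which $\m{X}$ is determined by Lemma~\ref{feasiblestate}, $\g{\Lambda}_{K+1,0}$ by $\nabla\C{T}_5^*$, and $\g{\Lambda}$ by Lemma~\ref{feasiblecostate}. Hence the solution is unique, as claimed.

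I expect the main obstacle to be bookkeeping rather than analysis: carefully choosing the linear functional $\ell$ so that the KKT system of the LQ problem reproduces all six components of $(\ref{linearproblem})$ exactly, which requires the linearized version of the multiplier identities of Proposition~\ref{equiv} (the $\omega_i$-rescaling relating the raw multipliers to $\g{\Lambda}_{ki}$, the identification of $\g{\Lambda}_{k0}$ with the continuity multiplier, the summation argument producing $\nabla\C{T}_4^*$) together with the reduced-gradient identity converting $\m{U}$-stationarity of $\widehat{\C{Q}}$ into the differential inclusion for $\nabla\C{T}_6^*$. The hypotheses enter in a localized way: (P1)--(P4) and (A2) are used only through the invertibility statements of Lemmas~\ref{feasiblestate} and~\ref{feasiblecostate}, while (A1) is used only through the uniform lower bound $\tfrac{h\alpha}{2}\|\m{U}\|_\omega^2$ on the reduced quadratic form --- which is immediate here since both $\m{T}$ and the Hamiltonian Hessian blocks are uniformly positive definite, so no smallness of $h$ beyond (A2) is needed for this step.
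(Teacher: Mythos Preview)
Your proposal is correct and follows essentially the same strategy as the paper: recast $(\ref{linearproblem})$ as the KKT system of a strongly convex quadratic program in $(\m{X},\m{U})$, use (A1) for existence and uniqueness of the primal pair, and invoke Lemmas~\ref{feasiblestate} and~\ref{feasiblecostate} for feasibility and for the unique recovery of $\g{\Lambda}$. The only cosmetic difference is that the paper keeps the full QP in $(\m{X},\m{U})$ and derives the KKT conditions directly from the Lagrangian (computing $\nabla_{\m{X}_{kj}}L$ and $\nabla_{\m{U}_{ki}}L$ explicitly and showing they reproduce components 3--6 of $(\ref{linearproblem})$), whereas you eliminate $\m{X}=\m{X}(\m{U})$ via Lemma~\ref{feasiblestate} and work with the reduced functional $\widehat{\C{Q}}$; both routes lead to the same verification, and your anticipated ``bookkeeping'' is precisely the Lagrangian computation the paper carries out.
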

\smallskip
\begin{proof}
Similar to the strategy used in
\cite{HagerDontchevPooreYang95,DontchevHager93,DontchevHager98a,
DontchevHagerVeliov00,Hager90,HagerHouRao15c,HagerHouRao16c,
HagerMohapatraRao16b},
a strongly convex quadratic programming problem is formulated;
the quadratic program is constructed so that
the first-order optimality conditions reduce to (\ref{linearproblem}).
In particular, we consider the problem

\begin{equation}\label{QP}
\left.
\begin{array}{cll}
\mbox {minimize} &\frac{1}{2} \mathcal{Q}({\bf X},{\bf U})
+ \C{L}(\m{X}, \m{U}, \m{Y}) &\\[.08in]
\mbox {subject to} &\sum_{j=0}^{N}{D}_{ij}{\bf X}_{kj} =
h({\bf A}_{ki}{\bf X}_{ki}+ {\bf B}_{ki}{\bf U}_{ki}) -{\bf y}_{1ki},
& \m{U}_{ki} \in \C{U}, \\
&\m{X}_{k0} =\m{X}_{k-1,N} - \m{y}_{2k}, \quad \m{X}_{0N} = \m{0}, &
\end{array}
\right\}
\end{equation}
where $1\leq i \leq N$ and $1 \le k \le K$.
The quadratic and linear terms in the objective are
\begin{eqnarray}
\C{Q}({\bf X},{\bf U}) &=&
{\bf X}_{KN}\tr{\bf T}{\bf X}_{KN} +
h\sum_{k=1}^K\sum_{i=1}^N \omega_i
\left[ \begin{array}{l} {\bf X}_{ki} \\ {\bf U}_{ki} \end{array} \right]\tr
\left[ \begin{array}{ll}
{\bf Q}_{ki} & {\bf S}_{ki} \\
{\bf S}_{ki}\tr & {\bf R}_{ki} \end{array}\right]
\left[ \begin{array}{l}{\bf X}_{ki} \\ {\bf U}_{ki} \end{array} \right],
\quad \quad \quad
\label{Q} \\[.05in]
\C{L}(\m{X}, \m{U}, \m{Y}) &=&
\m{y}_5\tr \m{X}_{KN} +
\sum_{k=1}^K\sum_{i=1}^N \omega_i \left( \m{y}_{3ki}\tr
\m{X}_{ki} -\m{y}_{6ki}\tr \m{U}_{ki} \right) \\
&& \quad \quad \quad \quad -\sum_{k=1}^{K}\m{X}_{k0}\tr \left( \m{y}_{4k}
 + \sum_{i=1}^N \omega_i \m{y}_{3ki} \right) . 
\label{L}
\end{eqnarray}
In (\ref{QP}), the minimization is over $\m{X}$ and $\m{U}$, while
$\m{Y}$ is a fixed parameter.
By Lemma~\ref{feasiblestate}, the quadratic programming problem (\ref{QP})
is feasible, and by the continuity condition, $\m{X}_{k0}$ can be
eliminated from (\ref{QP}).
%Notice that the variable $\m{X}_{k0}$ can be eliminated from the
%discrete dynamics in (\ref{QP}) since $\m{X}_{k0}=\m{X}_{k-1,N}$.
Since the Radau quadrature weights $\omega_i$ are strictly positive,
it follows from (A1) that $\C{Q}$ is strongly convex relative to
$\m{X}_{ki}$ and $\m{U}_{ki}$, where $1 \le i \le N$ and $1 \le k \le K$.
Hence, there exists a unique optimal solution to (\ref{QP})
for any choice of $\m{Y}$.
We now show that the first-order optimality conditions for (\ref{QP}) reduce to
$\nabla \C{T}^*[\m{X}, \m{U}, \g{\Lambda}] + \m{Y} \in \C{F}(\m{U})$.
The first-order optimality conditions hold since $\C{U}$ has nonempty
interior.
%nd the state dynamics have full rank by Lemma~\ref{feasiblestate}.
Since the first-order optimality conditions are both necessary and
sufficient for optimality in this convex setting,
there exists a solution to (\ref{linearproblem}).
Uniqueness of $\m{X}$ and $\m{U}$ is due to (A1) and the strong
convexity of (\ref{QP}).
Uniqueness of $\g{\Lambda}$ is by Lemma~\ref{feasiblecostate}.

The derivation of the first-order optimality conditions for (\ref{QP})
is essentially the same process that we used in Section~\ref{abstract}
to write the first-order optimality conditions for the discrete problem
(\ref{D}) as $\C{T}(\m{X}, \m{U}, \g{\Lambda}) \in \C{F}(\m{U})$.
The first two components of
$\nabla \C{T}^*[\m{X}, \m{U}, \g{\Lambda}] + \m{Y} \in \C{F}(\m{U})$ are
simply the constraints of (\ref{QP}).
The Lagrangian $L$ for (\ref{QP}) is
\begin{eqnarray*}
L(\g{\lambda}, \m{X}, \m{U}) &=&
{\textstyle\frac{1}{2}}\C{Q}(\m{X}, \m{U}) + \C{L}(\m{X}, \m{U}, \m{Y}) 
+ \sum_{k=1}^K\left\langle \g{\lambda}_{k0}, 
\left({\bf X}_{k-1,N}- \m{y}_{2k} - {\bf X}_{k0} \right) \right\rangle \\
&&
+ \sum_{k=1}^K\sum_{i=1}^N\left\langle\g{\lambda}_{ki},
h({\bf A}_{ki}{\bf X}_{ki}+ {\bf B}_{ki}{\bf U}_{ki}) - \m{y}_{1ki}
-\sum_{j=0}^{N}{D}_{ij}{\bf X}_{kj} \right\rangle
\end{eqnarray*}
The negative derivative of the Lagrangian with respect to $\m{U}_{ki}$ is
\[
-h \left[ \m{B}_{ki}\tr \g{\lambda}_{ki} + \omega_i
(\m{S}_{ki}\tr \m{X}_{ki} + \m{R}_{ki}\m{U}_{ki})\right] + \omega_i \m{y}_{6ki}.
\]
After the substituting
$\g{\lambda}_{ki} = \omega_i \g{\Lambda}_{ki}$,
the requirement that this vector lies in $N_{\C{U}}(\m{U}_{ki})$ leads
the 6th component of (\ref{linearproblem}).
Equating to zero the derivative of the Lagrangian with respect to
$\m{X}_{kj}$, $1 \le j < N$, yields the relation
\[
\sum_{i=1}^N {D}_{ij}\g{\lambda}_{ki} =
h \left[ \m{A}_{kj}\tr\g{\lambda}_{kj} +
\omega_j (\m{Q}_{kj}\m{X}_{kj} + \m{S}_{kj}\m{U}_{kj}) \right] +
\omega_j \m{y}_{3kj} .
\]
Equating to zero the derivative of the Lagrangian with respect to
$\m{X}_{kN}$ yields the relation
\[
\sum_{i=1}^N {D}_{ij}\g{\lambda}_{ki} =
h \left[ \m{A}_{kj}\tr\g{\lambda}_{kj} +
\omega_j (\m{Q}_{kj}\m{X}_{kj} + \m{S}_{kj}\m{U}_{kj}) \right] +
\omega_j \m{y}_{3kj} + \g{\lambda}_{k+1,0},
\]
where $\g{\lambda}_{K+1, 0} = \m{TX}_{KN} + \m{y}_{5}$.
After substituting $D_{ij} = -D^\ddagger_{ji}\omega_j/\omega_i$,
$\g{\lambda}_{ki} =$ $\omega_i \g{\Lambda}_{ki}$,
and $\g{\lambda}_{k0} = \g{\Lambda}_{k0}$,
we obtain the 3rd and 5th components of (\ref{linearproblem}).

Finally, we equate to zero the derivative of the Lagrangian with
respect to $\m{X}_{k0}$:
\[
\sum_{i=1}^N{D}_{i0}{\g \lambda}_{ki} =
- \left( \g{\lambda}_{k0} + \m{y}_{4k} +
\sum_{i=1}^N \omega_i \m{y}_{3ki} \right) .
\]
Utilizing the identity (\ref{eq06}), it follows that
\begin{equation}\label{Di0}
%\sum_{i=1}^N{D}_{i0}{\g \lambda}_{ki} =
\sum_{i=1}^N\sum_{j=1}^N \omega_i D_{ij}^\ddag\g{\Lambda}_{kj} =
- \left( \g{\Lambda}_{k0} + \m{y}_{4k} +
\sum_{i=1}^N \omega_i \m{y}_{3ki} \right) .
\end{equation}
Multiply the equations in the 3rd component of (\ref{linearproblem}) by
$\omega_i$ and sum over $i$ to obtain
\[
\sum_{i=1}^N\sum_{j=1}^N \omega_i D_{ij}^\ddag\g{\Lambda}_{kj} =
-\g{\Lambda}_{k+1,0} -
\sum_{i=1}^N \omega_i \left[ \m{y}_{3ki} + h \left(
\m{A}_{ki}\tr \g{\Lambda}_{ki}  + \m{Q}_{ki} \m{X}_{ki}
+ \m{S}_{ki} \m{U}_{ki} \right) \right] .
\]
By (\ref{Di0}), it follows that
\[
\g{\Lambda}_{k0} - \g{\Lambda}_{k+1,0}
- h \sum_{i=1}^N \omega_i (
\m{A}_{ki}\tr \g{\Lambda}_{ki}  + \m{Q}_{ki} \m{X}_{ki}
+ \m{S}_{ki} \m{U}_{ki}) + \m{y}_{4k} = \m{0} ,
\]
which is the 4th component of (\ref{linearproblem}).
This completes the proof.
\end{proof}
%--------------------------------------------------------------------------
\section{Lipschitz continuity of $(\C{F} - \nabla \C{T}^*)^{-1}$ and
proof of the main theorem}
\label{Lip}
%--------------------------------------------------------------------------
We begin by making the change of variables
$\m{X} = \m{Z} + \g{\chi}(\m{Y})$ where $\g{\chi}(\m{Y})$
denotes the solution of the state dynamics (\ref{h99})
corresponding to $\m{p}_{ki} = -\m{y}_{1ki}$ and $\m{q}_k = -\m{y}_{2k}$.
With this change of variables, $\m{y}_1$ and $\m{y}_2$ disappear
from the dynamics of the quadratic program (\ref{QP}) and the quadratic program
in $\m{Z}$ and $\m{U}$ reduces to
\begin{equation}\label{QPZ}
\left.
\begin{array}{cll}
\mbox{minimize}
&\frac{1}{2} \mathcal{Q}({\bf Z},{\bf U})
+ \bar{\C{L}}(\m{Z}, \m{U}, \m{Y}) &\\[.08in]
\mbox {subject to} &\sum_{j=0}^{N}{D}_{ij}{\bf Z}_{kj} =
h({\bf A}_{ki}{\bf Z}_{ki}+ {\bf B}_{ki}{\bf U}_{ki}) ,
& \m{U}_{ki} \in \C{U}, \\
&\m{Z}_{k0}=\m{Z}_{k-1,N}, \quad \m{Z}_{0N} = \m{0}, &
\end{array}
\right\}
\end{equation}
where $1\leq i \leq N$, $1 \le k \le K$, and
\begin{eqnarray}
\bar{\C{L}}(\m{Z}, \m{U}, \m{Y}) &=& \C{L}(\m{Z}, \m{U}, \m{Y}) +
\g{\chi}_{KN}(\m{Y})\tr \m{TZ}_{KN} \nonumber \\
&& + h \sum_{k=1}^K\sum_{i=1}^N \omega_i
\left[ \g{\chi}_{ki}\tr (\m{Y}) \m{Q}_{ki} \m{Z}_{ki}
+ \g{\chi}_{ki}\tr (\m{Y}) \m{S}_{ki}\m{U}_{ki} \right] .
\label{Lbar}
\end{eqnarray}

Note that $\m{Z}_{k0}$ can be eliminated from the optimization problem
with the substitution $\m{Z}_{k0}=\m{Z}_{k-1,N}$.
In the analysis that follows, it is assumed that the $\m{Z}_{k0}$
component of $\m{Z}$ has been deleted.
Note that $\C{Q}$ does not depend on $\m{Z}_{k0}$,
the $\m{Z}_{k0}$ in $L$ can be replaced by $\m{Z}_{k-1,N}$, and
the $\omega$-norm of $\m{Z}$ does not depend on $\m{Z}_{k0}$.
If $(\m{Z}^j, \m{U}^j)$ denotes the solution of (\ref{QPZ})
corresponding to $\m{Y}^j \in \C{Y}$, $j = 1$ and 2,
then by \cite[Lem.~4]{DontchevHager93}, the solution change
$\Delta\m{Z} = \m{Z}^1 - \m{Z}^2$ and $\Delta\m{U} = \m{U}^1 - \m{U}^2$
satisfies the relation
\begin{equation}\label{lemma4}
\C{Q}(\Delta\m{Z}, \Delta\m{U}) \le
|\bar{\C{L}}(\Delta\m{Z},\Delta\m{U}, \Delta \m{Y})|
\end{equation}
where $\Delta\m{Y} = \m{Y}^1 - \m{Y}^2$.

Observe that the quadratic $\C{Q}$ in (\ref{Q}) is expressed in terms
of the Hessian with respect to $\m{x}$ and $\m{u}$
of the Hamiltonian $H$ evaluated at
$(\m{x}^*(t_{ki}), \m{u}^*(t_{ki}), \g{\lambda}^*(t_{ki}))$;
by (A1), the Hessian of $H$ evaluated at
$(\m{x}^*(t), \m{u}^*(t), \g{\lambda}^*(t))$ for any $t \in [0, 1]$
has smallest eigenvalue greater than or equal to $\alpha > 0$.
It follows that
\[
\C{Q}(\Delta\m{Z}, \Delta\m{U}) \ge \alpha
\left( |\Delta \m{Z}_{KN}|^2 +
h \|\Delta\m{Z}\|_\omega^2 +
h \|\Delta\m{U}\|_\omega^2 \right) .
\]

Now consider the terms in $\C{L}$.
By the Schwarz inequality,
\[
\left| \sum_{k=1}^K
\sum_{i=1}^N \omega_i \Delta\m{y}_{3ki}\tr\Delta\m{Z}_{ki}\right| \le
\|\Delta \m{y}_3\|_\omega \|\Delta \m{Z}\|_\omega \le
\|\Delta \m{Y}\|_{\C{Y}} \|\Delta \m{Z}\|_\omega .
\]
Let $c$ denote a generic constant which is independent of $K$ and $N$.
For the control term in $\C{L}$, the triangle and Schwarz inequalities give
\begin{eqnarray*}
\left| \sum_{k=1}^K
\sum_{i=1}^N \omega_i \Delta\m{y}_{6ki}\tr\Delta\m{U}_{ki}\right| &\le&
c\|\Delta \m{y}_6\|_\infty
\sum_{k=1}^K \sum_{i=1}^N \omega_i |\Delta\m{U}_{ki}| \le
c\|\Delta \m{y}_6\|_{\infty} \sum_{k=1}^K \|\Delta\m{U}_k\|_\omega \\
&\le& ch^{-1/2}\|\Delta \m{y}_6\|_{\infty} \|\Delta\m{U}\|_\omega \le
c\|\Delta \m{Y}\|_{\C{Y}} \|\Delta\m{U}\|_\omega.
\end{eqnarray*}
The last inequality is due to the $h^{-1/2}$ factor in the $\C{Y}$-norm.

For the $\m{Z}_{k0}$-term in $\C{L}$, we have
\begin{equation}
\left| \Delta\m{Z}_{k0}\tr \left( \Delta\m{y}_{4k}
 + \sum_{i=1}^N \omega_i \Delta\m{y}_{3ki} \right) \right|
\le c \|\Delta \m{Z}\|_\infty
\left( |\Delta\m{y}_{4k}|
 + \sum_{i=1}^N \omega_i |\Delta\m{y}_{3ki}| \right) .
\label{Zk0}
\end{equation}
By Lemma~\ref{feasiblestate} with $\m{p}_{ki} = h\m{B}_{ki} \m{U}_{ki}$
and $\m{q}_k = 0$,
we have $\|\Delta \m{Z}\|_\infty \le ch^{1/2} \|\Delta \m{U}\|_\omega$.
Inserting this in (\ref{Zk0}) and applying the Schwarz inequality gives
\[
\left| \sum_{k=1}^{K}\Delta\m{Z}_{k0}\tr \left( \Delta\m{y}_{4k}
 + \sum_{i=1}^N \omega_i \Delta\m{y}_{3ki} \right) \right|
\le \|\Delta \m{U}\|_\omega
\left( |\Delta\m{y}_{4}|
 + \|\Delta\m{y}_{3}\|_\omega \right) \le c
\|\Delta \m{U}\|_\omega \|\Delta \m{Y}\|_{\C{Y}} .
\]

For the latter $\g{\chi}$ terms in (\ref{Lbar}), we have a bound such as
\begin{equation}\label{chi-bound0}
h \left| \sum_{k=1}^K\sum_{i=1}^N \omega_i
\g{\chi}_{ki}\tr (\Delta \m{Y}) \m{Q}_{ki} \Delta \m{Z}_{ki} \right| \le
ch \|\g{\chi}(\Delta \m{Y})\|_\omega \|\Delta \m{Z}\|_\omega.
\end{equation}
By Lemma~\ref{feasiblestate}, we have
\begin{equation}\label{chi-bound}
\|\g{\chi}(\Delta \m{Y})\|_\infty \le
ch^{-1/2} (\|\Delta \m{y}_1\|_\omega + |\Delta \m{y}_2|) \le
c h^{-1/2} \|\Delta \m{Y}\|_{\C{Y}}.
\end{equation}
Since $\|\g{\chi}(\Delta \m{Y})\|_\omega \le
h^{-1/2} \|\g{\chi}(\Delta \m{Y})\|_\infty$,
it follows from (\ref{chi-bound0}) and (\ref{chi-bound}) that
\[
h \left| \sum_{k=1}^K\sum_{i=1}^N \omega_i
\g{\chi}_{ki}\tr (\Delta \m{Y}) \m{Q}_{ki} \Delta \m{Z}_{ki} \right| \le
c \|\Delta \m{Y}\|_\omega \|\Delta \m{Z}\|_{\omega}.
\]

For the terminal term in (\ref{Lbar}), we have the bound
\[
|\g{\chi}_{KN}(\Delta\m{Y})\tr \m{T}\Delta\m{Z}_{KN}| \le
c |\g{\chi}_{KN} (\Delta\m{Y})| |\Delta\m{Z}_{KN}| \le
ch^{-1/2} (\|\Delta\m{y}_1\|_\omega + |\m{y}_2| ) |\Delta\m{Z}_{KN}|.
\]
The $\m{y}_5$ term in $\C{L}$ is similar.
By the Schwarz inequality,
\[
|\Delta \m{y}_5\tr \Delta \m{Z}_{KN}| \le |\Delta \m{y}_5 | |\Delta \m{Z}_{KN}|
\le h^{-1/2} \|\Delta \m{Y}\|_{\C{Y}} |\Delta \m{Z}_{KN}| ,
\]
where the $h^{-1/2}$ on the right cancels the $h^{1/2}$ factor
inside the $\C{Y}$-norm.

Combine these bounds for the linear term to obtain
\begin{eqnarray*}
|\bar{L}(\Delta\m{Z},\Delta\m{U}, \Delta \m{Y})| &\le&
c \|\Delta \m{Y}\|_{\C{Y}} \left(
h^{-1/2} |\Delta \m{Z}_{KN}| + \|\Delta\m{Z}\|_\omega + \|\Delta\m{U}\|_\omega
\right) \\
&=& ch^{-1/2} \|\Delta \m{Y}\|_{\C{Y}} \left( |\Delta \m{Z}_{KN}|
+ \sqrt{h}\|\Delta\m{Z}\|_\omega
+ \sqrt{h}\|\Delta\m{U}\|_\omega \right) \\
&\le& ch^{-1/2}  \|\Delta \m{Y}\|_{\C{Y}} \left(
|\Delta \m{Z}_{KN}|^2 + h \|\Delta\m{Z}\|_\omega^2 + h \|\Delta\m{U}\|_\omega^2
\right)^{1/2}.
\end{eqnarray*}
Combining the lower bound for $\C{Q}$ with the upper bound for
$\bar{\C{L}}$ gives
\begin{equation}\label{2norm}
\left(
|\Delta \m{Z}_{KN}|^2 + h \|\Delta\m{Z}\|_\omega^2 + h \|\Delta\m{U}\|_\omega^2
\right)^{1/2} \le ch^{-1/2}  \|\Delta \m{Y}\|_{\C{Y}} .
\end{equation}

Next, the $\omega$-type norm on the left side of (\ref{2norm}) will
be converted to an $\infty$-norm.
To do this, we first apply Lemma~\ref{feasiblestate} with
$\m{p}_{ki} = \Delta\m{y}_{1ki} + h \m{B}_{ki}\Delta \m{U}_{ki}$
and $\m{q}_k = \m{0}$.
The bound (\ref{2norm}) implies that
$\|\Delta \m{U}\|_\omega \le ch^{-1} \|\Delta \m{Y}\|_{\C{Y}}$;
consequently,
\begin{equation}\label{p-bound}
\|\m{p}\|_\omega \le
c \left( \|\Delta \m{y}_1\|_\omega + h \|\Delta \m{U}\|_\omega \right)
\le c \|\Delta \m{Y}\|_{\C{Y}} .
\end{equation}
It follows from (\ref{xjbound}) that
$\|\Delta \m{Z}\|_\infty \le ch^{-1/2} \|\Delta \m{Y}\|_{\C{Y}}$.
Hence, by (\ref{chi-bound}) we deduce that
\begin{equation}\label{X-bound}
\|\Delta \m{X}\|_\infty =
\|\Delta\m{Z} + \g{\chi}(\Delta\m{Y}) \|_\infty
\le ch^{-1/2} \|\Delta \m{Y}\|_{\C{Y}}.
\end{equation}
Since $\Delta \m{X}_{k0} = \Delta \m{X}_{k-1,0} + \Delta \m{y}_{2k}$,
it also follows that
\[
\|\Delta \m{X}_{k0}\|_\infty \le
\|\Delta \m{X}\|_\infty + |\Delta \m{y}_{2}| \le
ch^{-1/2} \|\Delta \m{Y}\|_{\C{Y}}.
\]

Let us now apply Lemma~\ref{feasiblecostate} with
\begin{eqnarray*}
\m{p}_{ki} &=& \Delta \m{y}_{3ki} +
h(\m{Q}_{ki} \Delta \m{X}_{ki} + \m{S}_{ki}\Delta\m{U}_{ki}), \quad
\Delta \g{\Lambda}_{K+1,0} = \m{T}\Delta\m{X}_{KN}, \quad \mbox{and}\\
\m{q}_k &=& \sum_{i=1}^N \omega_i \left( h \left[
\m{Q}_{ki} \Delta \m{X}_{ki} + \m{S}_{ki}\Delta\m{U}_{ki} \right]
+ \Delta \m{y}_{4ki} \right) .
\end{eqnarray*}
By (\ref{cjbound}), we have
\begin{equation}\label{lambda-bound}
\|\Delta \g{\Lambda}\|_{\infty} \le
c \left( \|\Delta \m{X}_{KN}\|_\infty + h^{-1/2} \|\m{p}\|_\omega
+ \sum_{k=1}^K |\m{q}_k|  \right) .
\end{equation}
By (\ref{X-bound}),
$\|\Delta \m{X}_{KN}\|_\infty \le ch^{-1/2}\|\Delta \m{Y}\|_{\C{Y}}$.
Exactly as in (\ref{p-bound}), $\|\m{p}\|_\omega \le c\|\Delta\m{Y}\|_{\C{Y}}$.
The Schwarz inequality yields
\begin{eqnarray*}
\sum_{k=1}^K |\m{q}_k| &\le&
c \sum_{k=1}^K \left( \|\Delta \m{y}_{2k}\|_\omega +
h \|\Delta \m{X}_k\|_\omega + h \|\Delta \m{U}_k\|_\omega \right) \\
&\le& c
h^{-1/2} \|\Delta \m{y}_2\|_\omega +
h^{1/2} \left[ \|\Delta \m{X}\|_\omega
+ \|\Delta \m{U}\|_\omega \right]
\le ch^{-1/2} \|\Delta \m{Y}\|_\omega .
\end{eqnarray*}
The last inequality utilizes both (\ref{2norm}) to bound the $\m{U}$ term
and (\ref{X-bound}) to bound the $\m{X}$ term.
Inserting these bounds in (\ref{lambda-bound}) yields
\begin{equation}\label{lambda-final}
\|\Delta \g{\Lambda}\|_\infty \le c h^{-1/2} \|\Delta \m{Y}\|_{\C{Y}} .
\end{equation}

Recall that $\m{R}_{ki} :=$
$\nabla_{uu}^2 H(\m{x}^*(t_{ki}), \m{u}^*(t_{ki}),\g{\lambda}^*(t_{ki}))$.
By (A1) the Hessian with respect to $\m{x}$ and $\m{u}$ of the Hamiltonian
$H$ evaluated at $(\m{x}^*(t), \m{u}^*(t), \g{\lambda}^*(t))$
for any $t \in [0, 1]$ has smallest eigenvalue greater than or equal to
$\alpha > 0$.
Consequently, the principal submatrix
$\m{R}_{ki}$ of the Hessian of the Hamiltonian
is positive definite with smallest eigenvalue
greater than or equal to $\alpha$.
It follows from the 6th component of the inclusion (\ref{linearproblem})
that the control associated with $\m{Y}$ solves the quadratic program
\[
\min_{\m{U}_{ki} \in \C{U}}
\;\; h \left( \frac{1}{2} \m{U}_{ki}\tr\m{R}_{ki} +
\m{X}_{ki}\tr \m{S}_{ki} + \g{\Lambda}_{ki}\tr \m{B}_{ki} \right)
\m{U}_{ki} + \m{y}_{6ki}\tr\m{U}_{ki} .
\]
Again by \cite[Lem.~4]{DontchevHager93}, the solution change associated
with the data change $\Delta \m{Y}$ has the bound
\[
h\alpha |\Delta \m{U}_{ki}|^2 \le \left|
h \left( \Delta \m{X}_{ki}\tr \m{S}_{ki} +
\Delta \g{\Lambda}_{ki}\tr \m{B}_{ki} \right) \Delta \m{U}_{ki} +
\Delta\m{y}_{6ki}\Delta \m{U}_{ki} \right|.
\]
Hence, we deduce that
\[
\|\Delta\m{U}_{ki}\|_\infty \le
|\Delta\m{U}_{ki}| \le c \left( \|\Delta \m{X}_{ki}\|_\infty +
\|\Delta \g{\Lambda}_{ki}\|_\infty + h^{-1}\|\Delta \m{y}_{6ki}\|_\infty
\right) .
\]
Utilizing the bounds (\ref{X-bound}) and (\ref{lambda-final}),
and the $h^{-1/2}$ factor associated with the 6-th
component of the $\C{Y}$-norm, yields
\begin{equation}\label{control-final}
\|\Delta\m{U}_{ki}\|_\infty \le c h^{-1/2} \|\Delta \m{Y}\|_{\C{Y}}.
\end{equation}
The bounds (\ref{X-bound}), (\ref{lambda-final}), and (\ref{control-final})
combine to establish the following Lipschitz continuity property:
\smallskip

\begin{lemma}\label{inf-bounds}
If {\rm (A1)}, {\rm (A2)}, and {\rm (P4)} hold,
then there exists a unique solution of $(\ref{linearproblem})$ for each
$\m{Y} \in \C{Y}$, and there exists a constant $c$,
independent of $K$ and $N$, such that the solution change
$\Delta \m{X}$, $\Delta \m{U}$, and $\Delta\g{\Lambda}$
relative to the change $\Delta \m{Y}$ satisfies
\[
\|(\Delta \m{X}, \Delta \m{U}, \Delta \g{\Lambda})\|_\infty
\le c h^{-1/2} \|\Delta \m{Y}\|_{\C{Y}}.
\]
\end{lemma}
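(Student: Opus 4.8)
The plan is to assemble the estimates already built up in Section~\ref{Lip}. Existence and uniqueness of a solution to (\ref{linearproblem}) for every $\m{Y}\in\C{Y}$ is precisely Proposition~\ref{invertible}, so only the Lipschitz bound remains. First I would recall that (\ref{linearproblem}) is the first-order optimality system of the strongly convex quadratic program (\ref{QP}) and, after the change of variables $\m{X}=\m{Z}+\g{\chi}(\m{Y})$ that removes $\m{y}_1$ and $\m{y}_2$ from the dynamics, of the reduced program (\ref{QPZ}). Writing $\Delta\m{Z},\Delta\m{U},\Delta\g{\Lambda},\Delta\m{Y}$ for the differences between solutions and data associated with two parameter values, the solution-sensitivity lemma \cite[Lem.~4]{DontchevHager93} yields $\C{Q}(\Delta\m{Z},\Delta\m{U})\le|\bar{\C{L}}(\Delta\m{Z},\Delta\m{U},\Delta\m{Y})|$.

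The heart of the argument is to sandwich this inequality. The coercivity hypothesis (A1), together with positivity of the Radau weights, gives
\[
\C{Q}(\Delta\m{Z},\Delta\m{U}) \ge \alpha\left( |\Delta\m{Z}_{KN}|^2 + h\|\Delta\m{Z}\|_\omega^2 + h\|\Delta\m{U}\|_\omega^2 \right).
\]
For the upper bound I would estimate each term of $\bar{\C{L}}$ separately with the Schwarz inequality, matching the $h$-weights built into the $\C{Y}$-norm: the $\m{y}_3$ term against $\|\Delta\m{Z}\|_\omega$, the $\m{y}_6$ term against $h^{-1/2}\|\Delta\m{U}\|_\omega$, the $\m{y}_4$ term after using Lemma~\ref{feasiblestate} to bound $\|\Delta\m{Z}\|_\infty$ by $ch^{1/2}\|\Delta\m{U}\|_\omega$, and the $\m{y}_5$ and terminal $\g{\chi}_{KN}$ contributions against $h^{-1/2}|\Delta\m{Z}_{KN}|$, where the bound (\ref{chi-bound}) on $\g{\chi}(\Delta\m{Y})$ is used. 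Collecting these, factoring out $(|\Delta\m{Z}_{KN}|^2 + h\|\Delta\m{Z}\|_\omega^2 + h\|\Delta\m{U}\|_\omega^2)^{1/2}$, and dividing, produces the $\omega$-norm estimate (\ref{2norm}).

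The last step upgrades the $\omega$-norm bound to the sup-norm bound claimed. From (\ref{2norm}) one gets $\|\Delta\m{U}\|_\omega\le ch^{-1}\|\Delta\m{Y}\|_{\C{Y}}$, so Lemma~\ref{feasiblestate} with $\m{p}_{ki}=\Delta\m{y}_{1ki}+h\m{B}_{ki}\Delta\m{U}_{ki}$ and $\m{q}_k=\m{0}$ gives $\|\Delta\m{Z}\|_\infty\le ch^{-1/2}\|\Delta\m{Y}\|_{\C{Y}}$, hence (\ref{X-bound}) for $\Delta\m{X}=\Delta\m{Z}+\g{\chi}(\Delta\m{Y})$; Lemma~\ref{feasiblecostate}, with data read off from the third, fourth, and fifth components of (\ref{linearproblem}), gives (\ref{lambda-final}) for $\Delta\g{\Lambda}$; and a final application of \cite[Lem.~4]{DontchevHager93} to the reduced quadratic program for the control, whose Hessian $\m{R}_{ki}$ is uniformly positive definite by (A1), gives (\ref{control-final}) for $\Delta\m{U}$. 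Taking the maximum of the three bounds yields $\|(\Delta\m{X},\Delta\m{U},\Delta\g{\Lambda})\|_\infty\le ch^{-1/2}\|\Delta\m{Y}\|_{\C{Y}}$. The delicate point throughout is the bookkeeping of powers of $h$: each term of $\bar{\C{L}}$ and each invocation of Lemmas~\ref{feasiblestate}--\ref{feasiblecostate} carries its own $h$-exponent, and the $\C{Y}$-norm is weighted exactly so that all of them collapse to the single factor $h^{-1/2}$; reconciling this, particularly for the mixed $\m{y}_4$, $\m{y}_5$, and $\g{\chi}_{KN}$ contributions, is where the care is required.
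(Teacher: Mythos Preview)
Your proposal is correct and follows essentially the same route as the paper: it is precisely the sequence of estimates developed in Section~\ref{Lip} leading up to the lemma, namely the sensitivity inequality (\ref{lemma4}) from \cite[Lem.~4]{DontchevHager93}, the lower bound on $\C{Q}$ from (A1), the term-by-term upper bound on $\bar{\C{L}}$ yielding (\ref{2norm}), and then the upgrades (\ref{X-bound}), (\ref{lambda-final}), (\ref{control-final}) via Lemmas~\ref{feasiblestate}--\ref{feasiblecostate} and a second use of \cite[Lem.~4]{DontchevHager93}. Your emphasis on the $h$-bookkeeping is apt, since that is indeed where the argument must be tracked carefully.
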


Theorem~\ref{maintheorem} is proved using Proposition~\ref{prop}.
The Lipschitz constant $\gamma$ of Proposition~\ref{prop}
is given by $\gamma = ch^{-1/2}$
where $c$ is the constant of Lemma~\ref{inf-bounds}.
The terms involving
$\m{D}$, $\m{D}^\ddag$, $\g{\Lambda}_{k0}$, $\g{\Lambda}_{k+1,0}$,
$\m{X}_{k0}$, and $\m{X}_{k-1,N}$ are constants in the derivative
$\nabla \C{T}$ and hence these terms cancel when we compute
the difference $\nabla \C{T}(\g{\theta}) - \nabla \C{T}(\g{\theta}^*)$,
where $\g{\theta} = (\m{X}, \m{U}, \g{\Lambda})$ and
$\g{\theta}^* = (\m{X}^*, \m{U}^*, \g{\Lambda}^*)$.
We are left with terms involving the difference of
derivatives of $\m{f}$ or $C$ up to second order at points in a neighborhood
of $\g{\theta}^*$.
By the Smoothness assumption, these derivatives are Lipschitz continuous
in a neighborhood of $(\m{X}^*, \m{U}^*)$.
Hence, there exists constants $\tau$ and $r > 0$ such that
\begin{eqnarray*}
\|\nabla [\m{f}(\m{X}_{ki}, \m{U}_{ki}) -
\m{f}(\m{X}_{ki}^*, \m{U}_{ki}^*)]\|_\infty
&\le& \tau \|\g{\theta} - \g{\theta}^*\|_\infty, \\
\|\nabla [\nabla_x \m{H}(\m{X}_{ki}, \m{U}_{ki}, \g{\Lambda}_{ki}) -
\nabla_x \m{H}(\m{X}_{ki}^*, \m{U}_{ki}^*, \g{\Lambda}_{ki}^*) ]\|_\infty
&\le& \tau \|\g{\theta} - \g{\theta}^*\|_\infty, \\
\|\nabla [\nabla_u \m{H}(\m{X}_{ki}, \m{U}_{ki}, \g{\Lambda}_{ki}) -
\nabla_u \m{H}(\m{X}_{ki}^*, \m{U}_{ki}^*, \g{\Lambda}_{ki}^*) ]\|_\infty
&\le& \tau \|\g{\theta} - \g{\theta}^*\|_\infty, \\
\| \nabla [\nabla C(\m{X}_{KN}) - \nabla C(\m{X}_{KN}^*)]\|_\infty &\le&
\tau \|\g{\theta} - \g{\theta}^*\|_\infty,
\end{eqnarray*}
whenever $\|\g{\theta} -\g{\theta}^*\|_\infty \le r$.
In applying Proposition~\ref{prop}, we need a bound for the $\C{Y}$-norm
of $\nabla \C{T}(\g{\theta}) - \nabla \C{T}(\g{\theta}^*)$.
Taking into account the location of $h$'s in $\C{T}$ and the location
of $h$'s in the $\C{Y}$-norm, it follows from the Lipschitz bounds
relative to $\tau$ that there exists a constant $\kappa$ such that
\[
\|\nabla \C{T}(\g{\theta}) - \nabla \C{T}(\g{\theta}^*)\|_{\C{Y}} \le \kappa
h^{1/2}\|\g{\theta}- \g{\theta}^*\|_\infty,
\]
whenever $\|\g{\theta} -\g{\theta}^*\|_\infty \le r$.
Choose $r> 0$ smaller if necessary to ensure that $c \kappa r < 1$,
where $c$ is the constant in Lemma~\ref{inf-bounds}.
In Proposition~\ref{prop}, $\epsilon = \kappa h^{1/2} r$ and
$\gamma = ch^{-1/2}$.
Hence, $\gamma \epsilon = c\kappa r < 1$.
Referring to Lemma~\ref{residuallemma}, choose $N$ large enough or
$h$ small enough so that
\[
\mbox{dist}[\mathcal{T}(\g{\theta}^*), \C{F}(\m{U}^*)] \le
\frac {(1-\gamma \epsilon)r} {\gamma}.
\]
Combine Lemma~\ref{residuallemma} with (\ref{abs}) and the formula
$\gamma = ch^{-1/2}$ to obtain the bound (\ref{maineq})
of Theorem~\ref{maintheorem}.

The solution to $\C{T}(\m{X},\m{U}, \g{\Lambda}) \in \C{F}(\m{U})$
corresponds to the first-order optimality condition for either
(\ref{nlp}) or (\ref{D}).
We use the second-order sufficient optimality conditions to show that
this stationary point is a local minimum when it is sufficiently close to
$(\m{X}^*,\m{U}^*, \g{\Lambda}^*)$.
After replacing the KKT multipliers by the transformed quantities given by
$\g{\Lambda}_{ki}= \g{\lambda}_{ki}/\omega_i$,
the Hessian of the Lagrangian is a block
diagonal matrix with the following matrices forming the diagonal blocks:
\[
\begin{array}{ll}
\omega_i \nabla_{(x,u)}^2 H(\m{X}_{ki}, \m{U}_{ki}, 
\g{\Lambda}_{ki}), &
1 \le i < N, \\[.05in]
\omega_i \nabla_{(x,u)}^2 H(\m{X}_{ki}, \m{U}_{ki}, 
\g{\Lambda}_{ki}) +
\nabla_{(x,u)}^2 C(\m{X}_{{ki}}), & i = N,
\end{array}
\]
where $H$ is the Hamiltonian and $1 \le k \le K$.
In forming the Hessian with respect to $\m{X}$ and $\m{U}$,
the variables are arranged as follows:
$\m{X}_{k1}$, $\m{U}_{k1}$, $\m{X}_{k2}$, $\m{U}_{k2}$,
$\ldots$, $\m{X}_{kN}$, $\m{U}_{kN}$, $1 \le k \le K$.
By (A1) the Hessian is positive definite when evaluated at
$(\m{X}^*, \m{U}^*, \g{\Lambda}^*)$.
Since the second derivatives of $C$ and $\m{f}$ are
Lipschitz continuous and the iterates converge to
$(\m{X}^*, \m{U}^*, \g{\Lambda}^*)$ in the sup-norm
by Theorem~\ref{maintheorem},
the Hessian of the Lagrangian evaluated at the discrete iterates
is positive definite for $N$ sufficiently large
or for $h$ sufficiently small with $N \ge 2$.
Hence, by the second-order sufficient optimality condition
\cite[Thm. 12.6]{NocedalWright2006}, the discrete state and control
is a strict local minimizer of (\ref{nlp}).
This completes the proof of Theorem~\ref{maintheorem}.
%---------------------------------------------------------------------------
\section{Numerical illustrations}
\label{numerical}
%---------------------------------------------------------------------------
In this section we analyze the errors associated with the proposed
Radau $hp$-collocation method using
numerical examples with known analytic solutions.
Consequently, it is possible to precisely determine the
error in the $hp$-approximations.
More complex examples, which do not have known analytic solutions, appear in
both \cite{Patterson2015} and at the GPOPS-II examples website:
\begin{center}
http://www.gpops2.com/Examples/Examples.html
\end{center}
In \cite{Patterson2015} it is observed that the solutions computed by
Radau $hp$-collocation are in close agreement to the solutions computed by
Betts' Sparse Optimization Suite (SOS) \cite{Betts2013}.
\subsection{Example 1}
First we consider the unconstrained control problem given by
\begin{align}
\mbox{min}\left\{-x(2): \;\dot{x}(t)=\frac{5}{2}(-x(t)+x(t)u(t)-u^2(t)),
\; x(0)=1\right\}.
\end{align}
The optimal solution and associated costate are
\begin{eqnarray*}
x^*(t) &=& 4/a(t), \quad a(t) = 1 + 3 \exp (2.5t), \\
u^*(t) &=& x^*(t)/2, \\
\lambda^* (t) &=& -a^2(t) \exp (-2.5t)/[\exp (-5) + 9 \exp(5) + 6].
\end{eqnarray*}
The time domain [0,2] is divided into equally spaced mesh intervals,
and on each mesh interval, we collocate at the Radau points
using polynomials of the same degree.
We consider polynomials of degree $N=2$, 3, and 4.
Convergence to the true solution is achieved by increasing the
number of mesh intervals.
Figure~\ref{example1} plots the base 10 logarithm of the error
at the collocation points in the sup-norm  versus the base 10 logarithm
of mesh size.
The results were obtained
using the software GPOPS-II \cite{GPOPS2} and the optimizer
IPOPT \cite{Biegler08} to solve the discrete nonlinear program.
The markers plotted in Figure~\ref{example1} correspond to the sup-norm
error at a given value for $h$, while the lines have slope $N+2$ for the
state and control, and $N+1$ for the costate.
The vertical placement of each line yields the least squares fit to the markers.
Observe that the error decays roughly linearly in this log-log plot,
and the pointwise error is roughly $O(h^{N+2})$
in the state and control, and $O(h^{N+1})$ in the costate for fixed $N$.
%We use the slope of the least squares linear fit to the data to
%estimate the decay rate.
%For degree $N=2$, the error in sup-norm
%is $O(10^{-\beta N})$ with $\beta= 4.0$ for both the state and control 
%variable, and $\beta = 3$ for the costate.
%For $N = 3$,  
%$\beta= 5.0$ for state and control and $3.8$ for costate,
%while for $N = 4$, $\beta= 5.7$ for state and control and
%$\beta = 4.8$ for costate.
%slopes: 3.97, 3.97 2.96, 5.01, 5.01, 3.80, 5.7, 5.7, 4.8
%The interior point code IPOPT could achieve an error around the
%square root of the machine epsilon (approximately $10^{-8}$).
%As a result, for $N = 3$ and $N = 4$, we were not able to take
%$h$ small enough to achieve the asymptotic range before exceeding
%the accuracy of the optimizer.
%Based on the results shown in Figure \ref{example1},
\begin{figure}
\begin{center}
\includegraphics[scale=.45]{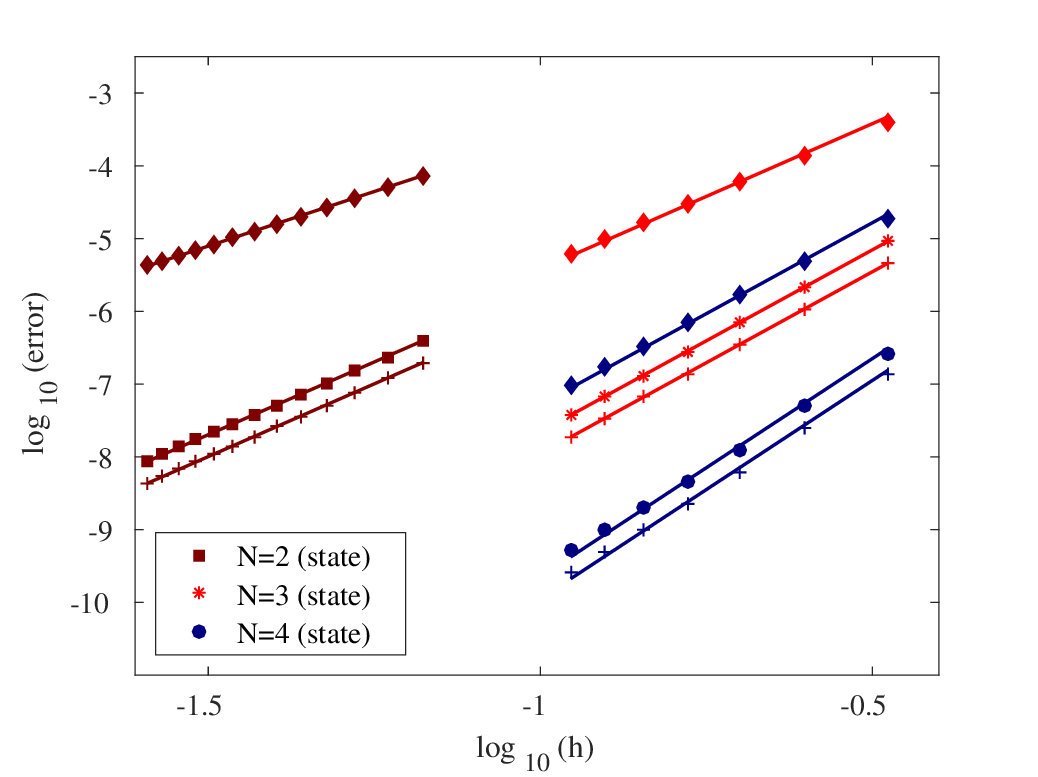}
\caption{The logarithm of the sup-norm error in Example~1
as a function of mesh size for polynomials of degree $N=2$, 3, and 4.
The errors in the controls, marked by plus signs, are beneath
the state error plots.
The errors in the costate, marked by diamonds, are above the state error plots.
\label{example1}}
\end{center}
\end{figure}

The bound given in Theorem~\ref{maintheorem} for fixed $N$ is $O(h^{N-1})$,
which is much slower than the observed convergence rate $O(h^{N+1})$.
This discrepancy could be due to either the simple nature
of the example, or to looseness in the analysis.
In our analysis, the exponent of $h$ is reduced by the following effects:
\begin{itemize}
\item[(a)]
Although the state is approximated by a polynomial of degree $N$,
the costate is approximated by a polynomial of degree $N-1$.
This difference between the state and the costate becomes apparent in
Proposition~\ref{equiv}.
We are not free to choose the costate polynomial, its degree comes
from the KKT conditions.
In the analysis of the residual given in Lemma~\ref{residuallemma},
the reduced degree for the costate polynomial implies that the
exponent of $h$ in the bound (\ref{delta}) is the minimum of $N$ and
$\eta$ rather than the minimum of $N+1$ and $\eta$.
\item[(b)]
In our analysis at the end of Section~\ref{Lip}, we showed that by taking
$r$ small enough, the expression $\gamma \epsilon$ in the denominator
of (\ref{abs}) was strictly bounded from one.
The analysis also showed that that the Lipschitz constant
satisfied $\gamma \le ch^{-1/2}$.
Hence, we lose a half power of $h$ through the Lipschitz constant in the
error bound (\ref{abs}).
\end{itemize}
If example 1 indeed represents the typical behavior of the error,
then the analysis must be sharpened to address the losses described in
(a) and (b).
%This would require a more precise analysis to reflect the fact that
%different components of the residual have errors of different orders.
%Similarly, the error analysis appearing in Proposition~\ref{prop}
%would need to be sharpened to reflect the fact that the 6 components
%of the residual may have significantly different magnitudes.

It is interesting to compare the analysis in this paper to the analysis
of Runge-Kutta schemes given in \cite{BonnansVarin06,Hager99c}.
For a fixed $N$, the Radau scheme in this paper is equivalent to a
Runge-Kutta scheme where the $\m{A}$ matrix and $\m{b}$
vector of \cite{Hager99c} describing the Runge-Kutta scheme are
$\m{D}_{1:N}^{-1}/2$ and the last row of $\m{D}_{1:N}^{-1}/2$ respectively.
For $N = 2$ and $N = 3$, the corresponding Runge-Kutta schemes have order
3 and 4 respectively, which means that the error in the Runge-Kutta schemes
are $O(h^3)$ and $O(h^4)$ respectively.
This exactly matches the costate error for the $hp$-scheme in this example.
A fundamental difference between the results of \cite{Hager99c} and the
results in this paper is that \cite{Hager99c} estimates the error at
the mesh points, and there is no information about the error at the
intermediate points, while in Theorem~\ref{maintheorem}, we estimate
the error at both collocation and mesh points.
In the $hp$-framework, it is important to have estimates at the collocation
points since $K$ could be fixed, and the convergence is achieved by letting
$N$ grow.

Based on the theory developed in the paper \cite{BonnansVarin06}
of Bonnans and Laurent-Varin, many conditions must be satisfied to
achieve high order convergence of a Runge-Kutta scheme for optimal control
(4116 conditions for order 7).
Potentially, the $hp$-scheme based on Radau collocation could be used to
generate high order Runge-Kutta schemes.

Next, we examine in Figure~\ref{exampleSingleInterval}
the exponential convergence rate predicted by
Theorem~\ref{maintheorem} when there is a single interval and
the degree of the polynomials is increased.
Since the plot of the base-10 logarithm of the error versus the degree of of the
polynomial is nearly linear, the error behaves like
$c 10^{-\alpha N}$ where $\alpha \approx 0.6$ for either the state
or the control and $\alpha \approx 0.8$ for the costate.
Since the solution to this problem is infinitely smooth, we can take
$\eta = N$ in Theorem~\ref{maintheorem}.
The error bound in Theorem~\ref{maintheorem} is somewhat complex since it
involves the derivatives of the solution.
Nonetheless, when we take the base-10 logarithm of the error bound,
the asymptotically dominant term appears to be $-N\log_{10} N$ for
Example~1.
Consequently, the slope of the curve in the error bound varies like
$-\log_{10} N$.
For $N$ between 4 and 16, $\log_{10} N$ varies from about 0.6 to 1.2.
Hence, our observed slopes 0.6 and 0.8 fall in the anticipated range.
\begin{figure}
\begin{center}
\includegraphics[scale=.5]{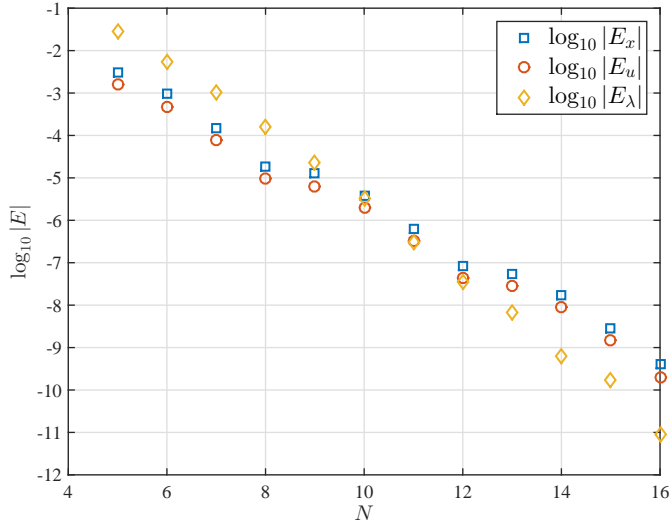}
\caption{The base 10 logarithm of the error in the sup-norm as a function of
the number of collocation points for Example~1.
\label{exampleSingleInterval}}
\end{center}
\end{figure}

\subsection{Example 2}
Next we consider the problem \cite{Hager84b} given by
\begin{eqnarray*}
&\mbox{minimize}&\quad \frac{1}{2}\int_{0}^{1}[x^2(t)+u^2(t)]\;dt\\
&\mbox{subject to}& \quad \dot{x}(t)=u(t),\quad u(t)\leq 1, \quad
x(t)\leq \frac{2\sqrt{e}}{1-e}\quad \mbox{for all }
t\in [0,1], \\
&& \quad x(0)=\frac{5e+3}{4(1-e)}.
\end{eqnarray*}
The exact solution to this problem is
\[
\begin{array}{lll}
{0\leq t\leq \frac{1}{4}}:
& x^*(t)=t-\frac{1}{4}+\frac{1+e}{1-e},
& u^*(t)=1,\\[.05in]
{\frac{1}{4} \leq t\leq \frac{3}{4}}:
& x^*(t)=\frac{e^{t-\frac{1}{4}}}{1-e}(1+e^{\frac{3}{2}-2t}),
& u^*(t)=\frac{e^{t-\frac{1}{4}}}{1-e}(1-e^{\frac{3}{2}-2t}),\\[.05in]
{\frac{3}{4}\leq t\leq 1}:
& x^*(t)=\frac{2\sqrt{e}}{1-e},
& u^*(t)=0.
\end{array}
\]
The solution of this problem is smooth on the three intervals
$[0, 0.25]$, $[0.25, 0.75]$, and $[0.75, 1.0]$, however, at the contact
points where one of the constraints changes from active to inactive,
there is a discontinuity in the derivative of the optimal control and
a discontinuity in the second derivative of the optimal state.
The goal with this test problem is to determine whether exponential
convergence occurs for the $hp$-scheme with a careful choice of the mesh,
and whether a state constrained problem, which is not covered by the
error analysis in this paper, possesses similar errors bounds to those
for control constrained problems.

First, we solve the problem using $K = 1$, in which case
convergence is achieved by increasing the degree $N$ of the polynomials.
In Figure~\ref{fig2}(a) we plot the logarithm of the error at the collocation
points in the sup-norm versus the logarithm of the polynomial degree.
Convergence occurs, but it is slow due to the discontinuity in the derivatives.
\begin{figure}
(a)\includegraphics[scale=.35]{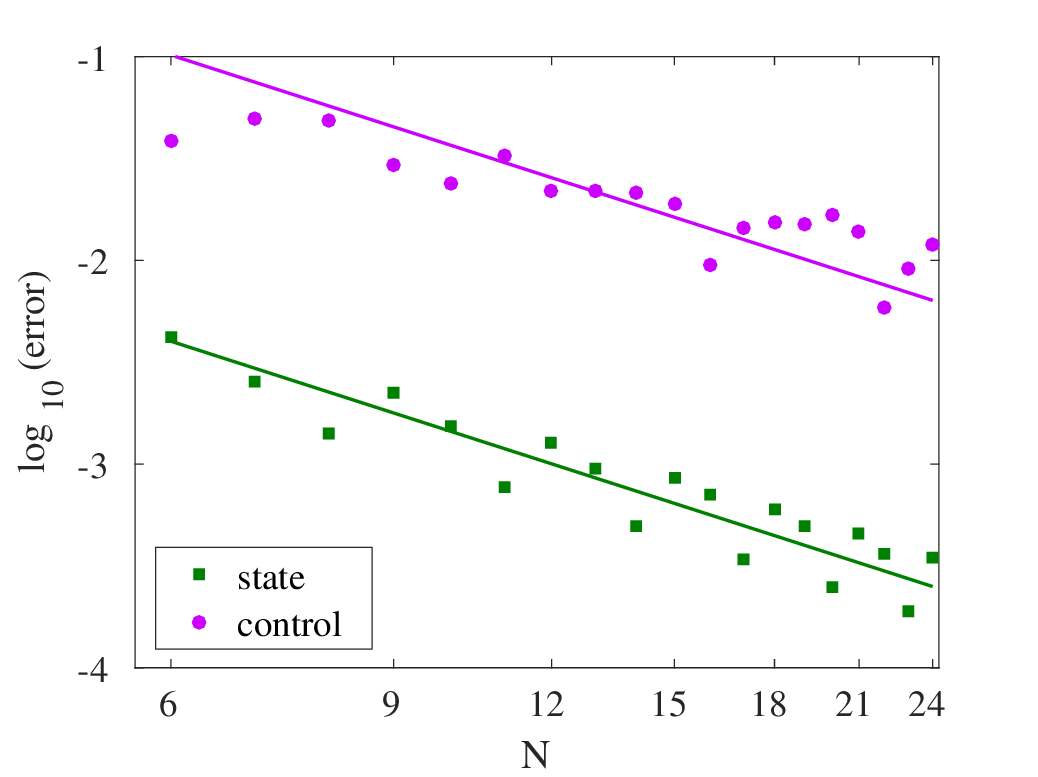}
(b)\includegraphics[scale=.35]{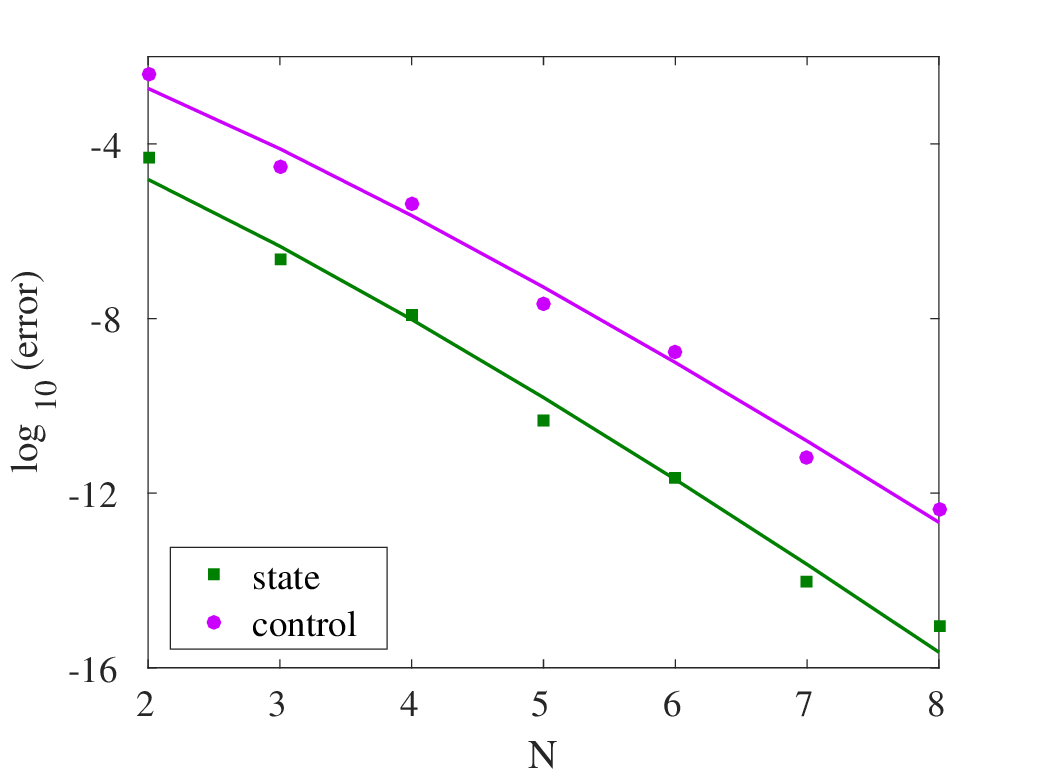}
\caption{The error in the solution to Example~2 as a function of the degree
of the polynomials used in the $hp$-approximation.
In (a) the polynomials are defined on the interval $[0, 1]$.
In (b) there are three mesh intervals
$[0, 0.25]$, $[0.25, 0.75]$, and $[0.75, 1.0]$,
and different polynomials of the same degree are used on each mesh interval.
\label{fig2}
}
\end{figure}
The lines in Figure~\ref{fig2}(a) have slope $-2$; their vertical placement
was chosen to achieve the best least squares fit to the markers
(the measured error).
Since the logarithm of the error is approximately fit by a line of slope
$-2$, the error decays like $c/N^2$, which is faster than what might
be expected from a bound like that given in Theorem~\ref{maintheorem}
with regularity $\C{H}^{2.5-\epsilon}$ for any $\epsilon > 0$.

Next, we divide the time interval [0,1] into three subintervals
$[0, 0.25]$, $[0.25, 0.75]$, and $[0.75, 1.0]$,
%$[0, \frac{1}{4}]$, $[\frac{1}{4}, \frac{3}{4}]$ and $[\frac{3}{4},1]$,
and use different polynomials of the same degree on each subinterval.
By this careful choice of the mesh intervals,
we obtain an exponential convergence rate in Figure \ref{fig2}(b).
%since the solution is smooth on the interior of each mesh interval.
Comparing Figures~\ref{fig2}(a) and \ref{fig2}(b),
we see that a huge improvement in the error is possible when we have 
good estimates for the contact points where the constraints change
between active and inactive.
Note that 16-digit accuracy was obtained in Figure~\ref{fig2}(b) by
using MATLAB's {\sc quadprog} to solve the quadratic program associated
with the $hp$-discretization of Example~2.

In a very rough sense, the error bound given by Theorem~\ref{maintheorem}
for a smooth problem has the general form $c_1(c_2/N)^N$.
The continuous curves plotted in Figure~\ref{fig2}(b) were obtained by choosing
$c_1$ and $c_2$ to achieve the least squares best fit to the markers
(the measured error).
For the state variable, $(c_1, c_2) = (0.0016, 0.1990)$, while for the
control $(c_1, c_2) = (0.0950, 0.2801)$.
Hence, it seems plausible that a state-constrained control problem may
possess an error bound similar to that established in Theorem~\ref{maintheorem}
for control constrained problems.

%---------------------------------------------------------------------------
\section{Conclusions}
%---------------------------------------------------------------------------
A convergence rate is derived for an
$hp$-orthogonal collocation method based on the Radau quadrature points
applied to a control problem with convex control constraints.
If the problem has a smooth local solution and a Hamiltonian
which satisfies a strong convexity assumption,
then the discrete approximation has a
local minimizer in a neighborhood of the continuous solution.
For the $hp$-scheme, both the number of mesh intervals in
the discretization and the degree of the polynomials on each mesh interval
can be freely chosen.
As the number of mesh intervals increases, convergence occurs at a
polynomial rate relative to the mesh width.
When there is control over the growth in derivatives,
the convergence rate is exponentially fast relative to the polynomial degree.
Convergence rates were investigated further using numerical examples.
When the polynomial degree is fixed and the mesh width tends to zero,
the observed convergence rate was faster
than the rate associated with the error bound.
For a problem with control and state constraints,
exponentially fast convergence was observed when
mesh points are located at the contact points
where the constraints change between active and inactive.
Based on the numerical results, it seems plausible that the
convergence result established for control constrained problem could
extend to problems with state constraints.
%---------------------------------------------------------------------------
\section{Appendix 1: Proof of (P1) and (P2)}
\label{appendix1}
%---------------------------------------------------------------------------
We analyze (P1) and (P2) when $\tau_i$, $1 \le i \le N$, are either
the Radau quadrature points analyzed in this paper, or the Gauss quadrature
points studied in \cite{HagerMohapatraRao16b}.
\smallskip

\begin{lemma}\label{P1P2}
For either the Gauss or Radau quadrature points,
the rows of the matrix $[\m{W}^{1/2}\m{D}_{1:N}]^{-1}$ have Euclidean
length bounded by $\sqrt{2}$.
For the Gauss quadrature points, $\|\m{D}_{1:N}^{-1}\|_\infty \le 2$,
and $\|\m{D}_{1:N}^{-1}\|_\infty$ approaches $2$ as $N$ tends to infinity,
while for the Radau quadrature points,
$\|\m{D}_{1:N}^{-1}\|_\infty = 2$.
\end{lemma}
\smallskip

\begin{proof}
Given $\m{p} \in \mathbb{R}^N$, let $p \in \C{P}_N$ denote the polynomial that
satisfies $p(-1) = 0$ and $p(\tau_i) = p_i$, $1 \le i \le N$.
Let $\dot{\m{p}} \in \mathbb{R}^N$ denote the vector with components
$\dot{p}_i = \dot{p}(\tau_i)$, and let $\ell_j$ be the Lagrange polynomial
defined by
\[
\ell_j(\tau) = 
\prod^{N}_{\substack{i=1\\i\neq j}}
\frac{\tau-\tau_i}{\tau_j-\tau_i}, \quad 1 \le j \le N.
\]
The identity
\begin{equation}\label{dot}
\dot{p}(\tau) = \sum_{j=1}^N \ell_j(\tau) \dot{p}_j
\end{equation}
holds since $\dot{p} \in \C{P}_{N-1}$ and the polynomials on each side of
(\ref{dot}) are equal at the $N$ quadrature points.
Integrate (\ref{dot}) to obtain
\begin{equation}\label{integrate}
p_i = \int_{-1}^{\tau_i} \dot{p}(\tau)\; d\tau =
\sum_{j=1}^N \left( \int_{-1}^{\tau_i} \ell_j(\tau) \; d\tau \right)
\dot{p}_j .
\end{equation}
Since $\m{D}$ is a differentiation matrix and $p(-1) = 0$, it follows that
$\m{D}_{1:N}\m{p} = \dot{\m{p}}$.
If the vector $\dot{\m{p}} = \m{0}$, then the polynomial $\dot{p} = 0$
since $\dot{p}$ has degree $N-1$
and vanishes at $N$ points.
Since $p(-1) = 0$, it follows that polynomial $p = 0$,
which implies that the vector $\m{p} = \m{0}$.
Hence, $\m{D}_{1:N}$ is invertible, and
$\m{p} = \m{D}^{-1}\dot{\m{p}}$.
Comparing the equality $\m{p} = \m{D}^{-1}\dot{\m{p}}$ to (\ref{integrate}),
we deduce that
\begin{equation}\label{Dinverse}
(\m{D}^{-1})_{ij} = \int_{-1}^{\tau_i} \ell_j(\tau)\; d\tau.
\end{equation}

Choose any $s \in [-1, 1]$ and define
\[
d_j(s) = \int_{-1}^s \ell_j(\tau) \; d\tau \quad \mbox{and} \quad
R(s) = \sum_{j=1}^N \frac{d_j(s)^2}{\omega_j}.
\]
Observe that $(\m{D}^{-1})_{ij} = d_j(\tau_i)$ and $R(\tau_i)$ is the
square of the Euclidean length of row $i$ in $(\m{W}^{1/2}\m{D})^{-1}$.
Let $q \in \C{P}_{N-1}$ be the polynomial defined by
\[
q(\tau) = \sum_{j=1}^N \frac{d_j(s) \ell_j(\tau)}{\omega_j}.
\]
Hence, by the triangle and Schwarz inequalities,
\begin{equation}\label{R1}
R(s) = \int_{-1}^s q(\tau) \; d\tau \le \int_{-1}^1 |q(\tau)| \;d \tau
\le \sqrt{2} \left( \int_{-1}^1 q(\tau)^2 \; d \tau \right)^{1/2}.
\end{equation}
Since $q^2 \in \C{P}_{2N-2}$, both Radau and Gauss quadrature are exact, and
\begin{equation}\label{R2}
\int_{-1}^1 q(\tau)^2 \; d\tau = \sum_{i=1}^N \omega_i q(\tau_i)^2,
\end{equation}
where the $\tau_j$ are either the Radau or Gauss quadrature points and
the $\omega_j$ are the associated weights.
Since $\ell_j(\tau_i) = 1$ for $i = j$ and $\ell_j(\tau_i) = 0$ otherwise,
it follows from the definition of $q$ that $q(\tau_i) = d_i(s)/\omega_i$.
This substitution in (\ref{R2}) yields
\begin{equation}\label{R3}
\int_{-1}^1 q(\tau)^2 \; d\tau =
\sum_{i=1}^N \frac{ d_i (s)^2}{\omega_j} = R(s) .
\end{equation}
Equating the expressions (\ref{R1}) and (\ref{R3}) implies that
\[
\left( \int_{-1}^1 q(\tau)^2 \; d \tau \right)^{1/2} \le \sqrt {2}.
\]
By (\ref{R3}), $R(s) \le 2$ for any $s \in [-1, 1]$.
In particular, $R(\tau_i) \le 2$ for $1 \le i \le N$.
Since $R(\tau_i)$ is the square of the Euclidean length of row $i$
in $(\m{W}^{1/2}\m{D})^{-1}$, the rows of $(\m{W}^{1/2}\m{D})^{-1}$
have Euclidean length bounded by $\sqrt{2}$.
This result holds for both the Radau and Gauss quadrature points since
since $q^2 \in \C{P}_{2N-2}$, and both Radau and Gauss quadrature are exact
for polynomials of this degree.

If $\m{r}$ is a row of $\m{D}_{1:N}^{-1}$, then by the
Schwarz inequality and the fact that the quadrature weights sum to 2 and
the rows of the matrix $[\m{W}^{1/2}\m{D}_{1:N}]^{-1}$ have Euclidean
length bounded by $\sqrt{2}$, we have
\begin{equation}\label{Euclidean}
\sum_{i=1}^N |r_i| =
\sum_{i=1}^N \sqrt{\omega_i} \left( |r_i|/\sqrt{\omega_i}\right) \le
\left( \sum_{i=1}^N \omega_i \right)^{1/2}
\left( \sum_{i=1}^N r_i^2/\omega_i \right)^{1/2} \le 2.
\end{equation}
Consequently, the absolute row sums for $\m{D}_{1:N}^{-1}$
are all bounded by 2, or equivalently, $\|\m{D}_{1:N}^{-1}\|_\infty \le 2$.
Given any polynomial $p \in \C{P}_N$ with $p(-1) = 0$ and
$|\dot{p}(\tau_i)| \le 1$ for $1 \le i \le N$, it is observed in
Section~9 of \cite{HagerMohapatraRao16b} that
$\|\m{D}_{1:N}^{-1}\|_\infty \ge$ $\max \{p(\tau_i) : 1 \le i \le N\}$.
Take $p(\tau) = 1 + \tau$ to deduce
that $\|\m{D}_{1:N}^{-1}\|_\infty \ge 1 + \tau_N$.
Hence, $1 + \tau_N \le \|\m{D}_{1:N}^{-1}\|_\infty \le 2$.
Since $\tau_N = 1$ for the Radau points, it follows that
$\|\m{D}_{1:N}^{-1}\|_\infty = 2$.
For the Gauss points, $\tau_N$ approaches 1 as $N$ tends to infinity;
consequently, $\|\m{D}_{1:N}^{-1}\|_\infty$ approaches 2 as $N$ tends to
infinity for the Gauss points.
\end{proof}
%---------------------------------------------------------------------------
\section{Appendix 2: An analytic formula for $({\bf D}^\ddag)^{-1}$}
\label{appendix2}
%---------------------------------------------------------------------------
Before stating property (P3) in the Introduction,
we showed that ${\bf D}^\ddag$ is an invertible matrix.
In this section, we give an analytic formula for the inverse.
\begin{proposition}\label{deriv_exact}
The inverse of ${\bf D}^\ddag$ is given by
%%
%\begin{equation}\label{deriv_formula}
%\textstyle {
%\left(\begin{array}{cccc}
%\omega_N M_1 (1) + \int_{1}^{\tau_1}M_1(\tau) d\tau & \dots &
%\omega_N M_{N-1}(1) + \int_1^{\tau_1}M_N(\tau) d\tau & \omega_N \\
%\dots & \dots & \dots & \dots \\
%\omega_N M_1 (1) + \int_{1}^{\tau_{N-1}}M_1(\tau) d\tau & \dots &
%\omega_N M_{N-1}(1) + \int_1^{\tau_{N-1}}M_{N-1}(\tau) d\tau & \omega_N \\
%\omega_N M_1 (1) & \dots &
%\omega_N M_{N-1}(1) & \omega_N
%\end{array}\right), }
%\end{equation}
%%
%
\[
\begin{array}{llll}
D^{\ddag \; -1}_{ij} &=& \omega_N M_j(1) +
\displaystyle{\int_1^{\tau_i} M_j (\tau) d\tau,} &
1 \le i < N, \;\; 1 \le j < N, \\[.05in]
D^{\ddag \; -1}_{iN} &=& -\omega_N , & 1 \le i \le N, \\[.05in]
D^{\ddag \; -1}_{Nj} &=& \omega_N M_j (1) , & 1 \le j < N,
\end{array}
\]
where $M_j$, $1 \le j < N$, is the Lagrange interpolating basis
relative to the point set $\tau_1$, $\ldots$, $\tau_{N-1}$.
That is,
\[
M_j (\tau) =
\displaystyle\prod_{\substack{i=1\\ i\neq j}}^{N-1}
\frac{\tau-\tau_i}{\tau_j-\tau_i},\quad j=1,\ldots, N-1.
\]
\end{proposition}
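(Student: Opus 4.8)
The plan is to exploit the characterization of $\m{D}^\ddagger$ as the differentiation-type operator recorded in (\ref{h282}). Identify a vector $\m{p}\in\mathbb{R}^N$ with the unique polynomial $p\in\C{P}_{N-1}$ satisfying $p(\tau_i)=p_i$ for $1\le i\le N$. Then, for a given right-hand side $\m{v}\in\mathbb{R}^N$, the equation $\m{D}^\ddagger\m{p}=\m{v}$ is, by (\ref{h282}), the system
\begin{eqnarray*}
\dot p(\tau_i) &=& v_i, \qquad 1\le i<N,\\
\dot p(1)-p(1)/\omega_N &=& v_N .
\end{eqnarray*}
Solving this system for the $p_i$ as explicit linear combinations of $v_1,\ldots,v_N$ produces the entries of $(\m{D}^\ddagger)^{-1}$ directly; equivalently, letting $\m{v}$ range over the standard basis vectors yields the columns of $(\m{D}^\ddagger)^{-1}$.

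First I would reconstruct $\dot p$. Since $p\in\C{P}_{N-1}$, its derivative $\dot p$ lies in $\C{P}_{N-2}$, so it is determined by its $N-1$ values $\dot p(\tau_1),\ldots,\dot p(\tau_{N-1})$; Lagrange interpolation on the node set $\{\tau_1,\ldots,\tau_{N-1}\}$ gives $\dot p(\tau)=\sum_{j=1}^{N-1}v_jM_j(\tau)$ with $M_j$ as in the statement. Integrating from $1$,
\[
p(\tau)=p(1)+\sum_{j=1}^{N-1}v_j\int_1^{\tau}M_j(s)\,ds ,
\]
and in particular $\dot p(1)=\sum_{j=1}^{N-1}v_jM_j(1)$. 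Substituting into the last equation of the system determines the constant of integration: $p(1)=\omega_N\dot p(1)-\omega_Nv_N=\omega_N\sum_{j=1}^{N-1}v_jM_j(1)-\omega_Nv_N$. This step is where the $p(1)/\omega_N$ term in (\ref{h282}) is essential: without it the antiderivative of $\dot p$ would be free up to an additive constant, whereas this term pins the constant down uniquely, consistently with the invertibility of $\m{D}^\ddagger$ already established preceding (P3).

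Finally I would read off the entries. For $1\le i<N$, evaluating $p$ at $\tau_i$ and inserting the value of $p(1)$ gives
\[
p_i=\sum_{j=1}^{N-1}v_j\Big[\omega_NM_j(1)+\int_1^{\tau_i}M_j(s)\,ds\Big]-\omega_Nv_N ,
\]
which identifies $D^{\ddagger\,-1}_{ij}=\omega_NM_j(1)+\int_1^{\tau_i}M_j(\tau)\,d\tau$ for $1\le j<N$ and $D^{\ddagger\,-1}_{iN}=-\omega_N$. Because $\tau_N=1$, the case $i=N$ reduces to $p_N=p(1)=\sum_{j=1}^{N-1}v_j\omega_NM_j(1)-\omega_Nv_N$, giving $D^{\ddagger\,-1}_{Nj}=\omega_NM_j(1)$ for $1\le j<N$ and $D^{\ddagger\,-1}_{NN}=-\omega_N$; note that the first displayed row of the formula in the statement, evaluated at $i=N$, reproduces $D^{\ddagger\,-1}_{Nj}=\omega_NM_j(1)$ since $\int_1^{\tau_N}M_j=0$, so the three cases agree. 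I do not anticipate a genuine obstacle here; the only points needing care are the degree count that licenses interpolating $\dot p$ on the $N-1$ nodes $\tau_1,\ldots,\tau_{N-1}$ and the bookkeeping of the constant of integration. If a cross-check is desired, one can instead verify $\m{D}^\ddagger(\m{D}^{\ddagger\,-1})=\m{I}$ directly from (\ref{h282}), but the constructive derivation above is shorter and explains where each term comes from.
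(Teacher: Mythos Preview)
Your proof is correct and follows essentially the same approach as the paper: both arguments exploit the characterization (\ref{h282}) of $\m{D}^\ddagger$ acting on $\C{P}_{N-1}$, reconstruct $\dot p$ via the Lagrange basis $M_j$ on $\tau_1,\ldots,\tau_{N-1}$, and integrate from $\tau=1$ to recover $p$. The only organizational difference is that the paper first isolates the last column via the identity $\m{D}^{\ddagger\,-1}\m{e}_N=-\omega_N\m{1}$ of (\ref{h999}) and then inserts the specific test polynomials $p(\tau)=\int_1^\tau M_j(s)\,ds$ into the relation $\m{D}^{\ddagger\,-1}\dot{\m p}=\m{p}-\m{1}p(1)$, whereas you solve $\m{D}^\ddagger\m{p}=\m{v}$ for a general right-hand side and read off all entries at once; the underlying computation is the same.
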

\smallskip
\begin{proof}
The relation (\ref{h282}) holds for any polynomial $p$ of degree at
most $N-1$.
Let $\dot{\m{p}} \in \mathbb{R}^N$ denote the vector with
$i$-th component $\dot{p}(\tau_i)$.
In vector form, the system of equations (\ref{h282}) can be expressed
${\bf D}^\ddag{\bf p}=$ $\dot{\bf p} - \m{e}_N p(1)/\omega_N$.
Multiply by
$\m{D}^{\ddag \; -1}$ and exploit the identity
$\m{D}^{\ddag \; -1} \m{e}_N = -\omega_N \m{1}$ of (\ref{h999})
to obtain
\begin{equation}\label{h998}
{\bf D}^{\ddag \; -1} \dot{\m{p}} = \m{p} - \m{1}p(1) .
\end{equation}

Since $\dot{p}$ is a polynomial of degree at most $N-2$,
we can only specify the derivative of $p$ at $N-1$ distinct points.
Given any $j$ satisfying $1 \le j < N$, let us insert in (\ref{h998})
a polynomial $p \in \C{P}_{N-1}$ satisfying
\[
\dot{p} (\tau_j) = 1 \quad \mbox{and} \quad
\dot{p}(\tau_i) = 0
\mbox{ for all } i < N, \; \; i \ne j.
\]
A specific polynomial with this property is
\begin{equation}\label{h996}
p(\tau) = \int_{1}^{\tau} M_j(\tau) d\tau.
\end{equation}
Since $p_N = p(1) = 0$, the last component of the right side of
(\ref{h998}) vanishes to give the relation
$D_{Nj}^{\ddag \; -1} + D_{NN}^{\ddag \; -1} \dot{p}(1) = 0$.
In (\ref{h999}) we showed that all the elements in the last column of
$\m{D}^{\ddag \; -1}$ are equal to $-\omega_N$, and
by (\ref{h996}), $\dot{p}(1) = M_j (1)$.
Hence, we obtain the relation
\begin{equation}\label{h997}
D_{Nj}^{\ddag \; -1} =
-D_{NN}^{\ddag \; -1} \dot{p}(1) =
\omega_N \dot{p}(1) =
\omega_N M_j (1), \;\; 1 \le j < N.
\end{equation}

Finally, let us consider $D_{ij}^{\ddag \; -1}$ for $i < N$ and $j < N$.
We combine the $i$-th component of (\ref{h998}) for $i < N$ with
(\ref{h996}) to obtain
\begin{equation}\label{h995}
({\bf D}^{\ddag \; -1} \dot{\m{p}})_i = \int_{1}^{\tau_i} M_j (\tau) d\tau .
\end{equation}
Recall that all components of $\dot{\m{p}}$ vanish except for the $j$-th,
which is 1, and the $N$-th, which is $M_j(1)$ by (\ref{h996}).
Hence, (\ref{h995}) and the fact that
the elements in the last column of
$\m{D}^{\ddag \; -1}$ are all $-\omega_N$ yield
\[
D_{ij}^{\ddag \; -1} = \int_{1}^{\tau_i} M_j (\tau) d\tau -
D_{iN}^{\ddag \; -1} M_j(1) =
\omega_N M_j (1) + \int_{1}^{\tau_i} M_j (\tau) d\tau
\]
This completes the proof.
\end{proof}

Tables~\ref{P3} and \ref{P4} show $\|\m{D}^{\ddag \; -1}\|_\infty$
and the maximum Euclidean norm of the rows of
$\m{D}^{\ddag \; -1}\m{W}^{-1/2}$ for an increasing sequence of dimensions.
The norms in Table~\ref{P3} approach 2 as $N$ grows, consistent with (P3),
while the norms in Table~\ref{P4} approach $\sqrt{2}$, consistent with (P4).
\begin{table}[ht]
\centering
\begin{tabular}{c|c|c|c|c|c|c}
\hline\hline
$N$ & 25 & 50 & 75 & 100 & 125 & 150 \\
\hline
norm & 1.995376 & 1.998844 & 1.999486 & 1.999711 & 1.999815 & 1.999871  \\
\hline\hline
$N$ & 175 & 200 & 225 & 250 & 275 & 300 \\
\hline
norm & 1.999906 & 1.999928 & 1.999943 & 1.999954 & 1.999962 & 1.999968\\
\hline
\end{tabular}
\vspace*{.1in}
\caption{$\|{\bf D}^{\ddag\; -1}\|_\infty$
\label{P3}}
\end{table}
\begin{table}[ht]
\centering
\begin{tabular}{c|c|c|c|c|c|c}
\hline\hline
$N$ & 25 & 50 & 75 & 100 & 125 & 150 \\
\hline
norm & 1.412209 & 1.413691 & 1.413982 & 1.414083 & 1.414130 & 1.414156 \\
\hline\hline
$N$ & 175 & 200 & 225 & 250 & 275 & 300 \\
\hline
norm & 1.414171 & 1.414181 & 1.414188 & 1.414193 & 1.414196 & 1.414199 \\
\hline
\end{tabular}
\vspace*{.1in}
\caption{Maximum Euclidean norm for the rows of
$[\m{W}^{1/2}{\bf D}^\ddag]^{-1}$
\label{P4}}
\end{table}

%Consider the following problem:
%\[
%\begin{array}{cl}
%\mbox{maximize} & |q(t)| \\
%\mbox{subject to} & q \in \C{P}_N, \quad q(-1) = 0, \quad 1 \le t \le +1 \\
%& | \dot{q}(\tau_k) | \le 1 \mbox{ for all } 1 \le k \le N
%\end{array}
%\]
%Claim that the maximum is attained when $t = +1$.
%Suppose to the contrary that $t_m < +1$.
%Without loss of generality, $q(t_m) > 0$.
%Hence, $\dot{q}(t_m) = 0$.
%$\dot{q}$ must change its sign at $t_m$, otherwise $t_m$ is not the maximum.
%Must have $\dot{q}(t) < 0$ for $t > t_m$ and $\dot{q}(t) > 0$ for $t < t_m$.
%(not relevant).
%Claim that $\dot{q}(\tau_k) = +1$ for all $k$.
%Suppose not.
%Let $p\in \C{P}_{N-1}$ vanish at all the other $\tau_k$.
%$\dot{p}(t_m) \ne 0$.
%If positive, then better max to right.
%If negative, then better max to left.
%\max \{ q(t) : -1 \le t \le +1, q(-1) = 0, |\dot{q}(\tau_k)| \le 1
%\forall 1 \le k \le N \}
%\]

%\clearpage
\bibliographystyle{siam}
%\bibliography{/home/math2/hager/Biblio/library}

\end{document}